\pgfplotsset{compat=1.11}
\def\thmref@flush{%
   \ifx\thmref@last\empty\else
      \ifthmref@comma, \thmref@finaltrue\fi \thmref@commatrue
      \thmref@last \ifx\thmref@stack\empty\else s\fi \thmref@num 0
      \let\do\thmref@one \thmref@stack
      \ifcase\thmref@num\or\space and\else\thmref@finaltrue, and\fi
      ~\ref{\thmref@head}\let\thmref@stack\empty\fi}
\def\thmref@one#1{\ifnum\thmref@num>0,\fi
   \space\ref{#1}\advance\thmref@num 1\relax}
\newtheorem{thm}{Theorem}[section]
\newtheorem{lemma}[thm]{Lemma}
\newtheorem{prop}[thm]{Proposition}
\newtheorem{cor}[thm]{Corollary}
\theoremstyle{remark}
\newtheorem{remark}[thm]{Remark}
\theoremstyle{definition}
\newtheorem{define}[thm]{Definition}
\newtheorem{example}[thm]{Example}
\newcommand{\E}{\mathbb{E}}
\renewcommand{\P}{\mathbb{P}}
\newcommand{\NN}{\mathbb{N}}
\newcommand{\1}{\mathbf{1}}
\newcommand{\Poi}{\mathrm{Poi}}
\newcommand{\Bin}{\mathrm{Bin}}
\newcommand{\eqd}{\overset{d}{=}}
\newcommand{\Ff}{\mathscr{F}}
\DeclarePairedDelimiter\abs{\lvert}{\rvert}%
\newcommand{\crit}{\mathrm{crit}}
\newcommand{\GW}[1]{\mathrm{GW}_{#1}}
\newcommand{\Tt}{\mathcal{T}}
\newcommand{\Be}[3][x]{B^{=}_{#2,#3}(#1)}
\newcommand{\Bg}[2]{B^{\geq}_{#1,#2}(x)}
\newcommand{\tier}{\mathrm{tier}}
\newcommand{\lambdacrit}{\lambda_{\mathrm{crit}}}
\newcommand{\Biggmid}{\ \Bigg\vert\ }
\newcommand{\sss}{\mathfrak{S}}
\newcommand{\barT}{{\overline{T}}}
\newcommand{\barS}{{\overline{S}}}
\newcommand{\topol}{{\Pi_{\infty}}}
\author{Tobias Johnson}
\address{Department of Mathematics, College of Staten Island}
\email{tobias.johnson@csi.cuny.edu}
\thanks{The author received support from NSF grant DMS-1811952 and PSC-CUNY Award \#62628-00 50.}
\keywords{Galton--Watson tree, phase transition, first-order}
\subjclass[2010]{60J80, 60K35, 82B26}
\title{Continuous phase transitions on Galton--Watson trees}
\begin{document}

\begin{abstract}
  Distinguishing between continuous and first-order phase transitions is a major challenge
  in random discrete systems.
  We study the topic for events with recursive structure on Galton--Watson
  trees. For example, let $\Tt_1$ be the event that
  a Galton--Watson tree is infinite, and let $\Tt_2$ be the event that it contains an infinite
  binary tree starting from its root. These events satisfy similar recursive properties: 
  $\Tt_1$ holds if and only if $\Tt_1$ holds for at least
  one of the trees initiated by children of the root, and $\Tt_2$ holds if and only if $\Tt_2$ holds for
  at least two of these trees.  
  The probability
  of $\Tt_1$ has a continuous phase transition, increasing from $0$ when the mean of the child
  distribution increases above $1$. On the other hand, the probability of $\Tt_2$ has a first-order
  phase transition, jumping discontinuously to a nonzero value at criticality.
  Given the recursive property satisfied by the event, we describe the critical child distributions
  where a continuous phase transition takes place. In many cases, we also characterize
  the event undergoing the phase transition.
\end{abstract}

\maketitle

\section{Introduction}

Understanding phase transitions is a central task in discrete probability
and statistical physics. 
One of the most basic questions about a phase transition is whether it is \emph{continuous}
or \emph{first-order}. That is, when a quantity undergoes a phase transition, does
it vary continuously as a parameter is varied, 
or does it take a discontinuous jump at criticality? This question is often difficult.
For example, the phase transition for the probability that the origin belongs to an infinite component
in bond percolation on the lattice is thought to be continuous, but it remains unproven
in dimensions $3,\ldots,10$ \cite{HS,FH}.

This paper investigates phase transitions on Galton--Watson trees for events
satisfying certain recursive properties.
This setting is inspired by two examples. Let $\Tt_1$ be the set of infinite rooted trees,
and let $\Tt_2$ be the set of trees containing an infinite binary tree starting from the root.
Let $T_\lambda$ be a Galton--Watson tree with child distribution $\Poi(\lambda)$.
The event $\{T_\lambda\in\Tt_1\}$ has probability~$0$ for $\lambda<1$. It undergoes a continuous
phase transition at $\lambda=1$, with its probability rising from $0$ as $\lambda$ increases above
$1$. On the other hand, the event $\{T_\lambda\in\Tt_2\}$ has probability $0$
for $\lambda<\lambdacrit\approx 3.35$. Its probability jumps to approximately .535 at $\lambda=\lambdacrit$
and increases continuously after that. See \cite[Example~5.5]{JPS} for a detailed treatment
of this example; see \cite{PSW} for this example in the context
of random graphs; and see \cite{Dekking} for an earlier analysis of $\Tt_2$ and proof
that the phase transition is discontinuous for a different family of child distributions.

The sets $\Tt_1$ and $\Tt_2$ both satisfy recursive properties.
A tree is in $\Tt_1$ if and only if the root has at least one child that initiates
a tree in $\Tt_1$. Similarly, a tree is in $\Tt_2$ if and only if the root has at least two children
that initiate trees in $\Tt_2$. Why does $\Tt_1$ have a continuous phase transition
while $\Tt_2$ does not?
The goal of this paper is to answer this question, and more generally
to explain the connection between the recursive property that an event satisfies
and the phase transition that the event undergoes.
It will take some work to state our results, but let us start with an informal account.

First, the event $\Tt_2$ will never have a continuous phase transition
under any family of child distributions. For this event, we say that the \emph{threshold function}
$h(\ell)$ is identically $2$, meaning that regardless of the count $\ell$ of children of the root
of the tree, the event $\Tt_2$ holds if and only if at least two of the children initiate
a tree in $\Tt_2$. For $\Tt_1$, the associated threshold function is identically $1$.
(We will define threshold functions more formally in Section~\ref{subsec:automata.interpretations}.)
\thref{thm:derivatives,thm:criticality} give a criterion for whether a continuous phase transition
occurs at a child distribution $\chi$ given the threshold function $h$
of the event. In particular, a continuous phase transition
can occur at a child distribution $\chi$ only if
\begin{align*}
  \sum_{\text{$\ell\colon h(\ell)=1$}}\chi(\ell)\ell=1.
\end{align*}
This is satisfied for $\Tt_1$ whenever the child distribution has mean~$1$, but it is never satisfied
for $\Tt_2$.

The criterion given by \thref{thm:derivatives,thm:criticality} for when continuous phase transitions
occur is one of the two main results
of the paper, although it is not particularly difficult to show
using results from \cite{JPS}.
The bulk of the work in this paper is to prove the other main result, \thref{thm:critical.event}, 
which runs in the opposite direction as our examples so far.
Suppose we start with a recursive property, without any example of a set of trees
satisfying the property.
\thref{prop:interpretable.crit,thm:derivatives,thm:criticality} work together
to prove that there exists some set of trees satisfying the recursive property,
and that at a certain Galton--Watson measure the probability of this set of trees
undergoes a continuous phase transition. But these results do not describe this set of trees.
 \thref{thm:critical.event} characterizes this set in many circumstances.

To state our results, we must establish what exactly we mean when we say a set 
of trees satisfies a recursive property. A more general version of this framework is given in \cite{JPS}.
Our terminology here is consistent with this more general version,
though we will only introduce what we need here.

\subsection{General notation}\label{subsec:notation}
For a probability distribution $\chi$ on the nonnegative integers, we will
abbreviate quantities like $\chi(\{n\})$ to $\chi(n)$. We use $\GW{\chi}$
to denote the Galton--Watson measure with child distribution $\chi$ on the space
of rooted trees. 
Let $n_t(v)$ denote the number of children of a vertex~$v$ in a rooted tree~$t$.
We refer to the subtrees originated by the children of the root of a tree as
its \emph{root-child subtrees}.
We abuse notation slightly and use expressions like $\Bin(n,p)$ and $\Poi(\mu)$
to denote both a distribution
and a random variable with that distribution, in statements like 
$\P[\Bin(n,p)=k] = \binom{n}{k}p^k(1-p)^{n-k}$. For a random variable $N$ on the nonnegative integers,
$\Bin(N,p)$ denotes a random variable whose law is
the mixture of binomial distributions governed by the law of $N$ (i.e., 
$\P[\Bin(N,p)=k]=\sum_{n=0}^{\infty}\P[N=n]\P[\Bin(n,p)=k]$). We denote the falling
factorial $n(n-1)\cdots (n-k+1)$ by the notation $(n)_k$. Let $\NN=\{0,1,2,\ldots\}$.


\subsection{Encoding recursive properties}\label{subsec:automata.interpretations}

As we hinted earlier, we will describe recursive properties by giving a \emph{threshold function}
$h$. For a tree whose root has $\ell\geq 0$ children, we think of $h(\ell)$ as the minimum
number of its root-child subtrees with a given property to force the tree
itself to have that property. If an event is consistent with the recursive property encoded
by $h$, we call it an \emph{interpretation} of $h$. To formalize this, for a rooted tree $t$
let $\ell(t)$ denote the number of children of the root of $t$.
For a set of trees $\Tt$, 
let $c(t,\Tt)$ denote the number of root-child subtrees of $t$ that are elements of $\Tt$. 
For a given child distribution $\chi$
and threshold function $h$, we say that a $\GW{\chi}$-measurable set of trees $\Tt$ is an
\emph{interpretation of $(\chi,h)$} if
\begin{align}\label{eq:interpretation.def}
  t\in\Tt \iff c(t,\Tt)\geq h\bigl(\ell(t)\bigr) \qquad \text{for $\GW{\chi}$-a.e.\ rooted tree $t$.}
\end{align}
For example, the set $\Tt_1$ of infinite trees is an interpretation of $(\chi,h)$
where $h(\ell)\equiv 1$ and the set $\Tt_2$ of trees containing an infinite binary tree starting from
the root is an interpretation of $(\chi,h)$ where $h(\ell)\equiv 2$.
In both of these cases, \eqref{eq:interpretation.def} holds for all trees $t$, not just
for $\GW{\chi}$-a.e.\ tree $t$, which renders $\chi$ irrelevant.
In such cases we will often call our event \emph{an interpretation of $h$}, omitting
reference to the child distribution. (Excluding negligible sets in the definition is required for some
of the results in \cite{JPS}.)

For context, let us describe how these recursive properties
fit into the broader class considered in \cite{JPS}.
Suppose that the root of $t$ has $\ell$ children, and 
that $n_1$ originate trees with some property while $n_0$ of them do not.
Suppose that the counts $n_0$ and $n_1$ determine whether $t$ itself has the property,
and let the map $A\colon\NN^2\to\{0,1\}$ specify this, with $A(n_0,n_1)=1$ when $t$
has the the property and $A(n_0,n_1)=0$ when it does not. In \cite{JPS}, this map $A$ is
called a \emph{tree automaton}, and an event consistent with the recursive property described
by the automaton is called an interpretation of it. Recursive properties defined
by a threshold function $h$ correspond to
automata of the form $A(n_0,n_1)=\1\{n_1\geq h(n_0+n_1)\}$, which in \cite{JPS}
are called \emph{monotone automata}. We restrict ourselves to such automata because we have
stronger results for them, primarily because the Margulis--Russo lemma provides
a powerful tool for their analysis (see \cite[Section~5.1]{JPS}).

The tree automata described above are called \emph{two-state}, in that each tree has one
of two possible states (having the property or not having the property) and the state of a tree
is determined by the states of its root-child subtrees. In \cite{JPS}, automata are considered
with more than two states, which again increases the complexity of the theory.

In this paper, we will often impose the additional condition that $h(\ell)$ is 
(nonstrictly) increasing in $\ell$. This is also a form of monotonicity for the recursive property;
it amounts to declaring that if a tree $t$ has the property, then it still has it
after attaching an additional subtree to the root.

\subsection{Fixed points}\label{subsec:fixed}
As we will soon see, the probability of an interpretation under the Galton--Watson measure
satisfies a fixed-point equation determined by the threshold function
and child distribution.
The classical example is the probability that a Galton--Watson
survives
(i.e., probability of the set $\Tt_1$ discussed in Section~\ref{subsec:automata.interpretations}).
Taking $T_\lambda\sim\GW{\Poi(\lambda)}$,
let $x= \P[T_\lambda\in\Tt_1]$. Since $T_\lambda\in\Tt_1$ if and only if at least one of the root-child
subtrees of $T_\lambda$ is in $\Tt_1$, and each of the $\Poi(\lambda)$ 
root-child subtrees has probability~$x$ of being in $\Tt_1$,
\begin{align}\label{eq:gw.classic}
  x = \P[\Poi(\lambda x)\geq 1] = 1 - e^{-\lambda x}
\end{align}
by Poisson thinning.
This equation has two solutions when $\lambda>1$, and in this case $x$ turns out to be
the larger of the two (the smaller is $0$).

To give the fixed-point equation in a general case, we define the \emph{automaton distribution
map} $\Psi(x)$ for a given child distribution $\chi$ and threshold function $h$.
(The terminology \emph{automaton distribution map} comes from a generalization in \cite{JPS}
that maps distributions to distributions.)
With $L\sim\chi$, we define
\begin{align}\label{eq:monotone.Psi}
  \Psi(x) = \P[ \Bin(L,x) \geq h(L) ] = \sum_{\ell=0}^{\infty}\chi(\ell)\P\bigl[\Bin(\ell,x)\geq h(\ell)\bigr].
\end{align}
In words, $\Psi(x)$ is the probability that at least $h(L)$ out of $L$ root-child subtrees of a Galton--Watson
tree have some property that holds for each of them with probability~$x$.

Observe that the right-hand side of \eqref{eq:gw.classic} is the automaton distribution map
for $\chi=\Poi(\lambda)$ and $h(\ell)\equiv 1$. Thus \eqref{eq:gw.classic} is the statement
that the probability of the interpretation $\Tt_1$ under $\GW{\chi}$
is a fixed point of the automaton distribution map.
In fact, it holds in general that for any child distribution $\chi$ and threshold function $h$, 
the probability of an interpretation $\Tt$ of $(\chi,h)$ is a fixed point
of its automaton distribution map: Let $T\sim\GW{\chi}$ and let $x=\P[T\in\Tt]$.
Conditional on the number of children of the root of $T$,
each root-child subtree in $T$ lies in $\Tt$ independently with probability~$x$, since
the root-child subtrees are themselves independently sampled from $\GW{\chi}$.
Because $\Tt$ is an interpretation of $A$, the tree's membership in $\Tt$ is determined
from its root-child subtrees' membership in $\Tt$ according to $h$.
Thus $\Psi(x)=\P[T\in\Tt]=x$.

We are interested in circumstances in which the automaton distribution maps have $0$ as a fixed point,
since we are investigating phase transitions emerging from $0$.
Thus we typically require the threshold function $h$ to satisfy
$h(\ell)\geq 1$ for all $\ell\geq 0$.
(Strictly speaking, to make $0$ a fixed point 
we only need $h(\ell)\geq 1$ for $\ell$ in the support of the child distribution,
but the value of $h(\ell)$ for $\ell$ outside of this support is irrelevant anyhow.)

\subsection{Results}
\pgfplotscreateplotcyclelist{colorchange}{%
blue,every mark/.append style={solid,fill=blue!80!black},mark=*\\%
red,densely dashed,every mark/.append style={solid,fill=red!80!black},mark=*\\%
brown!60!black,densely dotted,every mark/.append style={solid,fill=brown!80!black},mark=*\\%
black,dashdotted,mark=*,every mark/.append style={solid}\\%
blue,densely dotted,every mark/.append style={solid,fill=blue!80!black},mark=*\\%
red,every mark/.append style={solid,fill=red!80!black},mark=*\\%
brown!60!black,densely dashed,every mark/.append style={
solid,fill=brown!80!black},mark=square*\\%
black,densely dashed,every mark/.append style={solid,fill=gray},mark=otimes*\\%
blue,densely dashed,mark=star,every mark/.append style=solid\\%
red,densely dashed,every mark/.append style={solid,fill=red!80!black},mark=diamond*\\%
}  
\begin{figure}
\begin{tikzpicture}
    \begin{axis}[xmin=0, xmax=1,
                 legend pos=outer north east, thick, xlabel={$x$}, ylabel={$\Psi_t(x)-x$},
                 legend cell align=left,cycle list name=colorchange]
      \addplot[thin,black, forget plot, no markers] {0};
      \addplot+[mark indices={1001}] table {t4.dat};
      \addplot+[mark indices={819}] table {t3.dat};
      \addplot+[mark indices={563}] table {t2.dat};
      \addplot+[mark indices={347}] table {t1.dat};
      \addplot+[no markers] table {t0.dat};
      \addlegendentry{$t=\frac13$}
      \legend{$t=\textstyle\frac13$,$t=.2$,$t=.1$,$t=.05$,$t=0$}

    \end{axis}
    
  \end{tikzpicture}
  
  \caption{Let $\chi_t$ be the probability measure placing the
  vector of probabilities $\bigl(\frac13-t,\,0,\,\frac12,\,\frac16+t\bigr)$ on values
  $0,\,1,\,2,\,3$. Let $h(0)=h(1)=h(2)=1$ and $h(3)=2$.
  The graphs above depict $\Psi_t(x)$, the automaton distribution map of $(\chi_t,h)$.
  The recursive tree system $(\chi_t,h)$ is critical at $t=0$ in the sense of
  \thref{def:critical}. As $t$ increases,
  a single interpretable fixed
  point emerges and increases to $1$ as $t$ rises to $1/3$.
  }\label{fig:emergence}
\end{figure}
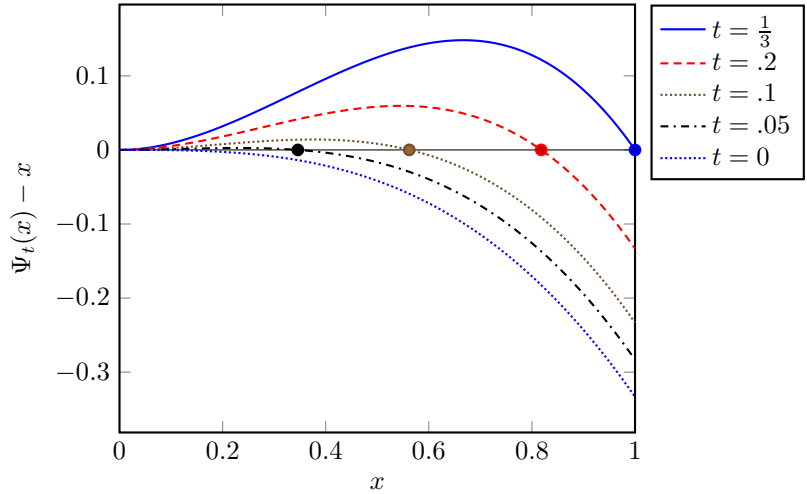

Fix a threshold function $h$, and let $\Psi_{\chi}$ be the automaton distribution map for
$(\chi,h)$.
Our aim is to understand the circumstances in which $\chi$ is \emph{critical}, 
in the sense that there is an event satisfying the recursive property encoded by $h$
whose probability emerges from $0$ as $\chi$ is perturbed (we will make this
definition precise in \thref{def:critical}).
Since an event satisfying
the recursive property has probability given by a fixed point of $\Psi_\chi$,
a new fixed point must emerge from $0$ as $\chi$ is perturbed.
Based on the idea that $\Psi_\chi$ changes continuously in $\chi$, 
intuition suggests that $\Psi'_\chi(0)=1$ is necessary in order to have a fixed point emerge from $0$
as $\chi$ is varied (see Figure~\ref{fig:emergence} for an example of a fixed point emerging).
This thought is on the right track, but it raises some questions:
\begin{enumerate}[(a)]
  \item Suppose that $\chi$ can be perturbed so that a fixed
    point of $\Psi_\chi$ emerges from $0$. Is it always the case that this fixed point has an interpretation?
    That is, is there an event satisfying the recursive property whose
    probability is given by the fixed point (and which therefore has a phase transition)?
    \label{i:comp1}
  \item Can we characterize the critical child measures $\chi$ in a more direct way than 
    stating properties of $\Psi_\chi$?
    \label{i:comp2}
  \item Suppose that $\Psi_\chi$ has a fixed point emerging from $0$ as $\chi$ is perturbed,
    and we can determine that indeed $\chi$ is critical, i.e., that
    there exists an interpretation associated with this fixed point undergoing
    a phase transition. Can we state what the interpretation is in any satisfying way?
        \label{i:comp3}
\end{enumerate}

Before we address these questions and present our results, 
let us recall and define some notation.
For a given child distribution $\chi$, we will be posing questions
about the interpretations of $(\chi,h)$, as defined in Section~\ref{subsec:automata.interpretations}.
We call $(\chi,h)$ a \emph{recursive tree system}, and
we take as part of the definition that $h(\ell)\geq 1$ for all $\ell\geq 0$.
As we explained in Section~\ref{subsec:fixed}, any interpretation of $(\chi,h)$ has $\GW{\chi}$-measure satisfying the fixed-point equation
$\Psi(x)=x$, where $\Psi$ is the automaton distribution map of $(\chi,h)$.
If $\Tt$ is an interpretation of $(\chi,h)$ with $\GW{\chi}(\Tt)=x_0$, then we say that
$\Tt$ is the interpretation associated with the fixed point $x_0$ (we write \emph{the interpretation}
rather than \emph{an interpretation} because we show in \thref{prop:interpretable.crit} that a given
fixed point can have at most one interpretation).
We refer to the fixed points of $\Psi$ as the fixed points of $(\chi,h)$.
For a system $(\chi,h)$, we define its \emph{$k$th tier} as the set of 
values $\ell\geq 1$ in the support of $\chi$ with $h(\ell)=k$.
That is, tier~$k$ for $(\chi,h)$ is defined as
\begin{align}\label{eq:tier.def}
  \tier(k)=\tier_{\chi,h}(k)=\bigl\{ \ell\geq 1\colon \text{$h(\ell)=k$ and $\chi(\ell)>0$} \bigr\}.
\end{align}

Question~\ref{i:comp1} is resolved by the following criterion for when a fixed point
of $(\chi,h)$ admits an interpretation:

\begin{prop}\thlabel{prop:interpretable.crit}
  Let $\chi$ have finite expectation and more than one point of support, 
  let $\Psi$ be the automaton distribution map
  of the recursive tree system $(\chi,h)$, and let $0<x_0<1$ be a fixed point of
  $(\chi,h)$.
  There exists an interpretation of $(\chi,h)$ associated with $x_0$
  if and only if $\Psi'(x_0)\leq 1$.
  When an interpretation of $x_0$ exists, it is unique up to $\GW{\chi}$-negligible sets.
\end{prop}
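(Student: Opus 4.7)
My plan is to reduce both existence and uniqueness of an interpretation of $x_0$ to a scalar convergence question. For any candidate interpretation $\Tt$ associated with $x_0$, define the conditional probability $q_n(t) := \GW{\chi}[T \in \Tt \mid T^{[n]} = t]$, where $T^{[n]}$ denotes $T \sim \GW{\chi}$ truncated at depth $n$. Because the root-child subtrees of a Galton--Watson tree are independent copies of $\GW{\chi}$ and because $\Tt$ is an interpretation, $q_n$ satisfies the $\Tt$-free recursion
\[
  q_n(t) = \phi_{\ell(t)}\bigl(q_{n-1}(t_1^{[n-1]}),\ldots,q_{n-1}(t_{\ell(t)}^{[n-1]})\bigr),\qquad q_0 \equiv x_0,
\]
where $\phi_\ell(z_1,\ldots,z_\ell) := \P[\sum_{i=1}^\ell \Ber(z_i) \geq h(\ell)]$. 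Since $q_n(T)$ is a bounded martingale under $\GW{\chi}$, it converges almost surely and in $L^2$ to a limit $q_\infty$. An interpretation associated with $x_0$ exists if and only if $q_\infty \in \{0,1\}$ almost surely, in which case the unique such interpretation (up to $\GW{\chi}$-null sets) is $\Tt = \{t : q_\infty(t) = 1\}$; this simultaneously handles uniqueness and reduces the problem to showing that $v_\infty := \mathrm{Var}(q_\infty)$ equals $x_0(1-x_0)$.

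Next I exploit the multilinearity of $\phi_\ell$ in its arguments: by the Hoeffding decomposition for independent inputs, if $Z_1,\ldots,Z_\ell$ are iid with mean $x_0$ and variance $v$, then $\mathrm{Var}(\phi_\ell(Z_1,\ldots,Z_\ell))$ depends only on $v$. This gives a scalar recursion $v_n = P(v_{n-1})$, where
\[
  P(v) \;=\; \sum_\ell \chi(\ell) \sum_{k=1}^{\ell} \binom{\ell}{k}(c_k^{(\ell)})^2\, v^k \;+\; \mathrm{Var}(a_L),
\]
$a_\ell := \P[\Bin(\ell,x_0) \geq h(\ell)]$, and $c_k^{(\ell)}$ is the mixed partial derivative of $\phi_\ell$ of order $k$ evaluated at $(x_0,\ldots,x_0)$. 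Since $Z_i \sim \Ber(x_0)$ (the only iid distribution with mean $x_0$ and variance $x_0(1-x_0)$) forces $\phi_\ell(Z_1,\ldots,Z_\ell) \sim \Ber(a_\ell)$, the value $v = x_0(1-x_0)$ is a fixed point of $P$. As $P$ is increasing and convex with nonnegative coefficients and $v_n = P^n(0)$, the sequence $v_n$ is monotone and converges to the smallest fixed point of $P$ in $[0,\,x_0(1-x_0)]$.

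The heart of the proof is the identity $P'(x_0(1-x_0)) = \Psi'(x_0)$, which I would establish via the following coupling. For iid $Z_i$ as above, conditionally on $Z$ draw two independent Bernoullis $B_i^{(1)}, B_i^{(2)} \sim \Ber(Z_i)$. Then $(B_i^{(1)}, B_i^{(2)})_{i=1}^\ell$ are iid bivariate $\Ber(x_0)$ pairs with correlation $\rho := v/(x_0(1-x_0))$, and
\[
  \E\bigl[\phi_\ell(Z_1,\ldots,Z_\ell)^2\bigr] \;=\; \P\bigl[\,\textstyle\sum_i B_i^{(1)} \geq h(\ell),\; \sum_i B_i^{(2)} \geq h(\ell)\,\bigr].
\]
Representing $(B_i^{(1)},B_i^{(2)})$ by the common-vs-independent mixture governed by $\Ber(\rho)$ switches, the $O(1-\rho)$ contribution to this joint probability comes entirely from configurations in which exactly one of the $\ell$ pairs is switched to independence; a direct conditional computation there yields $\ell\cdot x_0(1-x_0)\cdot \P[\Bin(\ell-1,x_0)=h(\ell)-1]$. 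Applying the chain rule $d\rho/dv = 1/(x_0(1-x_0))$ and averaging over $\ell \sim \chi$ produces $P'(x_0(1-x_0)) = \E[L\, \P[\Bin(L-1,x_0)=h(L)-1]] = \Psi'(x_0)$.

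With this identity the dichotomy follows from convex analysis of $g(v) := P(v) - v$ on $[0, x_0(1-x_0)]$. If $\Psi'(x_0) \leq 1$, then $g(x_0(1-x_0)) = 0$ together with $g'(x_0(1-x_0)) = \Psi'(x_0) - 1 \leq 0$ and convexity of $g$ forces $g \geq 0$ throughout the interval; the hypothesis that $\chi$ has more than one point of support is used here to ensure $g(0) = \mathrm{Var}(a_L) > 0$, ruling out the degenerate possibility $P \equiv \mathrm{id}$, so $v_n \to x_0(1-x_0)$ and the interpretation exists. Conversely, if $\Psi'(x_0) > 1$, then $g'(x_0(1-x_0)) > 0$ makes $g$ strictly negative just to the left of $x_0(1-x_0)$, and since $g(0) \geq 0$ the intermediate value theorem yields a second fixed point $v^* < x_0(1-x_0)$ of $P$; then $v_n \to v^*$, so $q_\infty$ is not $\{0,1\}$-valued and no interpretation can exist. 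The main technical obstacle will be the $\rho$-derivative coupling computation in the third paragraph, along with a careful treatment of the boundary case $\Psi'(x_0) = 1$ and verification that the support hypothesis on $\chi$ suffices to give $\mathrm{Var}(a_L) > 0$.
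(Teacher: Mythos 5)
Your proposal takes a genuinely different route from the paper. The paper's own proof is a two-line citation to results from \cite{JPS}: the existence of an interpretation is characterized there via the criticality of an associated \emph{pivot tree}, which in turn is shown to be controlled by $\Psi'(x_0)$ using the Margulis--Russo formula. Your approach, by contrast, is a self-contained ``broadcasting on trees'' argument: define $q_n$ as the depth-$n$ conditional probability, observe that it obeys a $\Tt$-free recursion starting from $q_0\equiv x_0$ (so it exists and is unique without assuming an interpretation exists), note it is a bounded martingale hence converges, and reduce the question to whether $\mathrm{Var}(q_\infty)=x_0(1-x_0)$. The multilinearity of $\phi_\ell$ turns the variance evolution into a scalar recursion $v_n=P(v_{n-1})$ for a convex, increasing power series $P$, and the entire proposition reduces to the identity $P'\bigl(x_0(1-x_0)\bigr)=\Psi'(x_0)$ followed by elementary convexity. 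I verified both the Hoeffding decomposition step (orthogonality of $\prod_{i\in S}(Z_i-x_0)$ across $S$ for iid $Z_i$ with mean $x_0$) and the $\rho$-coupling derivative computation: expanding the joint probability in $1-\rho$, the one-uncoupled-pair contribution is $-\ell\,x_0(1-x_0)\P[\Bin(\ell-1,x_0)=h(\ell)-1]$, and after the chain rule this does yield $P'(x_0(1-x_0))=\E\bigl[L\,\P[\Bin(L-1,x_0)=h(L)-1]\bigr]=\Psi'(x_0)$, which matches $\Psi'$ via the telescoping derivative of $\Bg{\ell}{h(\ell)}$ used in the paper's \thref{prop:B.deriv}. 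Uniqueness falls out for free, since $q_n$ is determined by the recursion alone. What your approach buys is independence from the pivot-tree machinery and a transparent variance mechanism; what the paper's citation buys is brevity and integration with the rest of \cite{JPS}.

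One claim in the proposal is actually false as stated and should be reorganized: more than one point of support does \emph{not} in general imply $\mathrm{Var}(a_L)>0$. For instance, take $\chi$ supported on $\{1,3\}$ with $h(1)=1$, $h(3)=2$, and $x_0=1/2$; then $a_1=a_3=1/2$ and $\mathrm{Var}(a_L)=0$. What saves the proof is the order of deduction: you should first establish that $P$ is \emph{strictly} convex (this is where ``more than one support point'' is genuinely used---it forces the existence of some $\ell\geq 2$ in the support with $h(\ell)\leq\ell$, whence $c_\ell^{(\ell)}\neq 0$ and the $v^\ell$ coefficient of $P$ is positive), and only then deduce $g(0)=\mathrm{Var}(a_L)>0$ from the hypothesis $g'(x_0(1-x_0))=\Psi'(x_0)-1\leq 0$ together with strict convexity, since those force $g$ to be strictly decreasing on $[0,x_0(1-x_0)]$. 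This same strict-convexity observation also dispatches the boundary case $\Psi'(x_0)=1$ that you flag, so the ``technical obstacle'' you identify is real but straightforward once the logical order is corrected.
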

We give a proof in Section~\ref{sec:analytic}, though it just amounts to tying together
results from \cite{JPS}. 
\thref{prop:interpretable.crit} does not address the case of $x_0=0$ or $x_0=1$
because such fixed points have trivial interpretations associated with them,
namely the empty set in the case of $0$ and the set of all rooted
trees in the case of $1$.

To address question~\ref{i:comp2}, we give a formula for the derivatives of $\Psi$
at zero in terms of $\chi$.
The notation $(\ell)_m$ in \eqref{eq:derivatives1} and \eqref{eq:derivatives2}
 denotes the falling factorial $\ell(\ell-1)\cdots (\ell-m+1)$.
\begin{thm}\thlabel{thm:derivatives}
  Let $\chi$ be a child distribution with finite $m$th moment, and let $\Psi$
  be the automaton distribution map of
  the recursive tree system $(\chi,h)$. Then for $m\geq 1$,
  \begin{align}\label{eq:derivatives1}
    \Psi^{(m)}(0) &= \sum_{\ell=m}^{\infty}(-1)^{m+h(\ell)}\binom{m-1}{h(\ell)-1}\chi(\ell)(\ell)_m\\
    &=
    \sum_{j=1}^m (-1)^{m+j}\binom{m-1}{j-1}\sum_{\ell\in\tier(j)} \chi(\ell)(\ell)_m.\label{eq:derivatives2}
  \end{align}
\end{thm}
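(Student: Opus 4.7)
The plan is to compute each $F_\ell^{(m)}(0)$, where $F_\ell(x) := \P[\Bin(\ell,x) \geq h(\ell)]$, and then to justify interchanging derivative with summation in $\Psi(x) = \sum_{\ell} \chi(\ell)F_\ell(x)$ using a uniform bound on $F_\ell^{(m)}$. Since each $F_\ell$ is a polynomial, computing $F_\ell^{(m)}(0)$ reduces to extracting the coefficient of $x^m$. Expanding $(1-x)^{\ell-k}$ in
\[
F_\ell(x) = \sum_{k=h(\ell)}^{\ell}\binom{\ell}{k}x^k(1-x)^{\ell-k}
\]
and using the absorption identity $\binom{\ell}{k}\binom{\ell-k}{m-k} = \binom{\ell}{m}\binom{m}{k}$ yields
\[
[x^m]F_\ell(x) = \binom{\ell}{m}\sum_{k=h(\ell)}^{m}(-1)^{m-k}\binom{m}{k}.
\]
The truncated alternating binomial sum on the right evaluates to $(-1)^{m+h(\ell)}\binom{m-1}{h(\ell)-1}$ by combining the standard identity $\sum_{k=0}^{h-1}(-1)^k\binom{m}{k}=(-1)^{h-1}\binom{m-1}{h-1}$ with $\sum_{k=0}^{m}(-1)^k\binom{m}{k}=0$ (for $m\geq 1$). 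Multiplying by $m!$ produces $F_\ell^{(m)}(0) = (-1)^{m+h(\ell)}\binom{m-1}{h(\ell)-1}(\ell)_m$, with both sides vanishing when $\ell<m$ or $h(\ell)>m$.

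Next, I would verify $\Psi^{(m)}(0) = \sum_{\ell}\chi(\ell)F_\ell^{(m)}(0)$ by bounding $F_\ell^{(m)}$ pointwise on $[0,1]$ in a manner summable against $\chi$. Starting from $\frac{d}{dx}\P[\Bin(\ell,x)=k] = \ell\bigl(\P[\Bin(\ell-1,x)=k-1]-\P[\Bin(\ell-1,x)=k]\bigr)$, summing over $k\geq h(\ell)$ telescopes to $F_\ell'(x) = \ell\,\P[\Bin(\ell-1,x)=h(\ell)-1]$. Induction on $m$ then gives the closed form
\[
F_\ell^{(m)}(x) = (\ell)_m\sum_{j=0}^{m-1}(-1)^{m-1-j}\binom{m-1}{j}\P\bigl[\Bin(\ell-m,x)=h(\ell)-1-j\bigr],
\]
where terms with negative or out-of-range indices are set to zero. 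Since each binomial PMF lies in $[0,1]$, one has $\abs{F_\ell^{(m)}(x)} \leq 2^{m-1}(\ell)_m$ uniformly on $[0,1]$, so the finite $m$th moment hypothesis forces $\sum_\ell \chi(\ell)F_\ell^{(m)}$ to converge uniformly on $[0,1]$. Standard results on term-by-term differentiation of uniformly convergent series then justify $m$-fold differentiation, and evaluation at $x=0$ yields \eqref{eq:derivatives1}.

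Finally, \eqref{eq:derivatives2} follows by regrouping the sum in \eqref{eq:derivatives1} according to $j := h(\ell)$: only $1\leq j\leq m$ contributes because $\binom{m-1}{j-1}=0$ otherwise, and the restriction $\ell\geq m$ may be relaxed to $\ell\in\tier(j)$ since $(\ell)_m=0$ for $\ell<m$. The main obstacle is the uniform bound on $F_\ell^{(m)}$: naive term-by-term bounds on the polynomial expansion of $F_\ell$ are too crude and force extra moment hypotheses, so the closed form expressing $F_\ell^{(m)}$ as a bounded linear combination of binomial PMFs is essential to keep the dominating function summable under the assumption $\E[L^m]<\infty$. Once that is in hand, the interchange of differentiation and summation is routine and the combinatorial identities complete the proof.
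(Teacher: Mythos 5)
Your proof is correct and follows essentially the same route as the paper: the closed form you derive for $F_\ell^{(m)}(x)$ as a signed combination of binomial PMFs is, after the reindexing $j\mapsto j+1$, identical to \thref{prop:B.deriv}, and the resulting bound $\abs{F_\ell^{(m)}(x)}\leq 2^{m-1}(\ell)_m$ is the same one the paper uses (via \thref{lem:derivatives.converge}) to justify term-by-term differentiation under the finite $m$th moment assumption. Your separate coefficient-extraction computation of $F_\ell^{(m)}(0)$ is a pleasant but redundant detour, since evaluating your closed form at $x=0$ (where only the term with $h(\ell)-1-j=0$ survives) yields the same value directly, which is exactly how the paper proceeds.
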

We highlight the $m=1,2$ cases of this theorem:
\begin{align}
  \Psi'(0) &= \sum_{\ell\in\tier(1)} \chi(\ell)\ell,\label{eq:Psi'}\\
  \Psi''(0) &= \sum_{\ell\in\tier(2)} \chi(\ell)\ell(\ell-1)
    -\sum_{\ell\in\tier(1)} \chi(\ell)\ell(\ell-1) .\label{eq:Psi''}
\end{align}

It follows from \thref{thm:derivatives} that the value of $\Psi^{(m)}(0)$ depends
only on the mass that $\chi$ places on the first $m$ tiers.  We
state this formally since we will often use it:
\begin{cor}\thlabel{rmk:only.tierm}
  Assume that $\chi$ and $\widetilde\chi$ have finite $m$th moments.
  Let $\Psi$ and $\widetilde\Psi$ be the automaton distribution
  maps of the recursive tree systems $(\chi,h)$ and $(\widetilde\chi,h)$, respectively.
  If $\chi(\ell)=\widetilde\chi(\ell)$ whenever $h(\ell)\leq m$, then
  $\Psi^{(m)}(0)=\widetilde\Psi^{(m)}(0)$.
\end{cor}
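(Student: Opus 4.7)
The plan is to read off the corollary directly from formula \eqref{eq:derivatives2} in \thref{thm:derivatives}. That formula expresses $\Psi^{(m)}(0)$ as a fixed linear combination (with coefficients $(-1)^{m+j}\binom{m-1}{j-1}$ depending only on $m$ and $j$) of the quantities
\begin{align*}
  S_j(\chi) \coloneqq \sum_{\ell\in\tier_{\chi,h}(j)} \chi(\ell)(\ell)_m, \qquad j = 1,\dots,m.
\end{align*}
So it suffices to show that $S_j(\chi)=S_j(\widetilde\chi)$ for each $j\leq m$.

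First I would observe that the apparent dependence of $\tier_{\chi,h}(j)$ on $\chi$ is harmless: by definition $\tier_{\chi,h}(j)$ is the set of $\ell\geq 1$ with $h(\ell)=j$ and $\chi(\ell)>0$, but any $\ell$ with $\chi(\ell)=0$ contributes a zero term to $S_j(\chi)$. Hence I can rewrite
\begin{align*}
  S_j(\chi) = \sum_{\ell\geq 1,\,h(\ell)=j} \chi(\ell)(\ell)_m,
\end{align*}
and likewise for $\widetilde\chi$, now summing over an index set that depends only on $h$ and not on the measure.

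Next, fix $j\in\{1,\dots,m\}$. Every $\ell$ appearing in this sum satisfies $h(\ell)=j\leq m$, so by hypothesis $\chi(\ell)=\widetilde\chi(\ell)$ for all such $\ell$. Therefore $S_j(\chi)=S_j(\widetilde\chi)$. Substituting into \eqref{eq:derivatives2} yields $\Psi^{(m)}(0)=\widetilde\Psi^{(m)}(0)$.

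There is no real obstacle here; this is essentially a bookkeeping consequence of the fact that the $j$th inner sum in \eqref{eq:derivatives2} only involves the mass $\chi$ places on tier $j$, and the outer sum ranges only over $j\leq m$. The one subtlety worth making explicit in the write-up is the remark above about the tier set depending on $\chi$ through the support, which is why I would first pass to an index set defined purely in terms of $h$ before comparing $\chi$ and $\widetilde\chi$.
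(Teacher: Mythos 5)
Your proof is correct and is exactly the intended reading of the corollary: the paper presents it as an immediate consequence of formula \eqref{eq:derivatives2} in \thref{thm:derivatives}, and you read it off from that formula in the same way. The one point you flag and handle carefully---that $\tier_{\chi,h}(j)$ nominally depends on $\chi$ through its support, but that this is harmless because vanishing mass contributes nothing to the sum---is the right thing to note and is dealt with correctly.
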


Now, we can relate conditions on the derivatives of $\Psi$ at $0$ back to the child distribution.
With this in mind, we will state our criteria
for where continuous phase transitions occur. 
Let $\topol$ denote the space of probability measures on the nonnegative
integers with all moments finite. On $\topol$,
for any $n\geq 0$ we can define a metric 
\begin{align*}
  d_n(\chi_1,\chi_2) &= \sum_{k=1 }^{\infty} k^n\abs{\chi_1(k)-\chi_2(k)}.
\end{align*}
We topologize $\topol$ by declaring that $\chi_n\to\chi$ if $d_n(\chi_n,\chi)\to 0$ for all $n\geq 0$.
We work in this space to avoid pathologies; see \thref{rmk:topology} for more details.

\begin{define}\thlabel{def:critical}
For a recursive tree system $(\chi,h)$ with $\chi\in\topol$, we say that $(\chi, h)$ is \emph{critical} 
if for any $\epsilon>0$, all neighborhoods
of $\chi$ in $\topol$ contain a measure $\pi$ such that there is an interpretation $\Tt$ 
of $(\pi,h)$
satisfying $0<\GW{\pi}(\Tt)<\epsilon$.
Equivalently, $(\chi,h)$ is critical if there exists a sequence
$\chi_n\in\topol$ converging to $\chi$ such that $(\chi_n,h)$ has an interpretation
$\Tt_n$ with $\GW{\chi_n}(\Tt_n)\searrow 0$. 
\end{define}

\begin{thm}\thlabel{thm:criticality}
  Let $\chi\in\topol$ have more than one point of support.
  The recursive tree system $(\chi,h)$ with automaton distribution map $\Psi$
  is critical if and only if $\Psi'(0)=1$ and $\Psi''(0)\leq 0$.
\end{thm}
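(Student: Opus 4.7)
My plan is to treat the two implications of Theorem~\ref{thm:criticality} separately, using Proposition~\ref{prop:interpretable.crit} to convert the probabilistic notion of criticality into an analytic condition on the fixed points of $\Psi$.

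\textbf{Necessity.} Assume $(\chi,h)$ is critical and let $\chi_n\to\chi$ in $\topol$ and $\Tt_n$ be interpretations of $(\chi_n,h)$ with $x_n:=\GW{\chi_n}(\Tt_n)\searrow 0$. Writing $\Psi_n$ for the automaton distribution map of $(\chi_n,h)$, Proposition~\ref{prop:interpretable.crit} gives $\Psi_n(x_n)=x_n$ and $\Psi_n'(x_n)\le 1$. The $\topol$-topology makes all moments of $\chi_n$ converge, so by \thref{thm:derivatives} we have $\Psi_n^{(k)}(0)\to\Psi^{(k)}(0)$ for $k=1,2$ and a uniform bound on $\Psi_n'''$ over $[0,1]$. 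A second-order Taylor expansion of $\Psi_n(x_n)=x_n$, divided by $x_n$, gives
\[
  1 = \Psi_n'(0) + \tfrac12\Psi_n''(0)\,x_n + O(x_n^2),
\]
and sending $n\to\infty$ yields $\Psi'(0)=1$. Expanding $\Psi_n'(x_n)\le 1$ to first order and combining with the identity above isolates $\tfrac12\Psi_n''(0)\,x_n\le O(x_n^2)$, whence $\Psi''(0)\le 0$.

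\textbf{Sufficiency, generic case.} Assume $\Psi'(0)=1$ and $\Psi''(0)\le 0$. Since $\chi$ has more than one point of support, tier~1 cannot be contained in $\{1\}$---otherwise $\Psi'(0)=\chi(1)$ would force $\chi=\delta_1$---so some $\ell^*\ge 2$ lies in tier~1. Set $\chi_t=(1-t)\chi+t\delta_{\ell^*}\to\chi$ in $\topol$. A direct application of \thref{thm:derivatives} (together with Corollary~\ref{rmk:only.tierm}) yields
\[
  \Psi_t'(0)=1+t(\ell^*-1), \qquad \Psi_t''(0)=(1-t)\Psi''(0)-t\ell^*(\ell^*-1),
\]
which are strictly $>1$ and $<0$ respectively for every $t>0$. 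Thus $\Psi_t(x)>x$ just above $0$, and since $\Psi_t(1)\le 1$, the function $\Psi_t(x)-x$ has a smallest positive zero $x_t\in(0,1]$; at $x_t$ its sign changes from positive to non-positive, so $\Psi_t'(x_t)\le 1$, and Proposition~\ref{prop:interpretable.crit} supplies the interpretation. When $\Psi''(0)<0$, there is some $(0,\delta]$ on which $\Psi(x)<x$; uniform convergence $\Psi_t\to\Psi$ on $[0,1]$ (which follows from the $\topol$ topology) then gives $\Psi_t(\delta)<\delta$ for small $t$, forcing $x_t<\delta$, and sending $\delta\searrow 0$ yields $x_t\searrow 0$.

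\textbf{Main obstacle.} The hard case is $\Psi''(0)=0$, for then $\Psi_t'(0)-1$ and $|\Psi_t''(0)|$ are both linear in $t$ and the quadratic balance in $\Psi_t(x)-x$ pins a root near $2/\ell^*$ instead of driving it to $0$. When $\Psi(x)<x$ in some right neighborhood of $0$ (which happens, for instance, when $\Psi'''(0)<0$), the Case~A argument still applies verbatim. The genuinely subtle sub-case is when $\Psi(x)\ge x$ in every right neighborhood of $0$; here the single-parameter family above fails because its quadratic has negative discriminant for small $t$. My plan to resolve this is a two-parameter perturbation. Since $\Psi''(0)=0$ and tier~1 contains some $\ell^*\ge 2$, the sum $\sum_{\tier(1)}\chi(\ell)\ell(\ell-1)$ is strictly positive, so tier~2 is nonempty; fix $m\in\tier(2)$. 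Remove mass of order $\epsilon$ from $\ell=m$ and add mass of order $\epsilon^2$ at $\ell^*$, absorbing the leftover at a vertex outside tiers~1 and~2. Tuning so that $\Psi_\epsilon'(0)-1=\Theta(\epsilon^2)$ while $\Psi_\epsilon''(0)=-\Theta(\epsilon)$, the quadratic approximation of $\Psi_\epsilon(x)-x$ now has positive discriminant and two positive roots of order $\epsilon$; the smaller is the smallest positive fixed point of $\Psi_\epsilon$, and it is interpretable by the familiar sign-change argument. The technical crux, which I expect to be the main work, is controlling the cubic and higher remainders uniformly in $\epsilon$ and verifying that the compensating adjustments always yield a bona fide probability measure in $\topol$.
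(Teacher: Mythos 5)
Your necessity argument is correct and is a cleaner reorganization of the paper's: the paper proves the contrapositive, splitting into the cases $\Psi'(0)\neq 1$ and $\Psi'(0)=1,\,\Psi''(0)>0$ and controlling the automaton distribution maps uniformly over a $\topol$-neighborhood, whereas you Taylor-expand the fixed-point identity $\Psi_n(x_n)=x_n$ and the interpretability inequality $\Psi_n'(x_n)\le 1$ directly along the sequence $x_n\searrow 0$. Both routes rely on the same uniform third-derivative bound supplied by convergence in $\topol$, but your version is shorter. The generic sufficiency case is also correct and matches the paper's in substance: the paper shifts mass $t$ from a non-tier-1 point to tier~1 while you mix with $\delta_{\ell^*}$; both make $\Psi_t'(0)>1$ so the down-crossing zero $x_t$ is interpretable, and when $\Psi''(0)<0$ the uniform convergence of $\Psi_t$ pins $x_t$ below any fixed $\delta$ once $t$ is small.

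The real issue, which you partly flag, is the sub-case $\Psi''(0)=0$ with $\Psi(x)\ge x$ near $0$. The assertion that the orders $\Psi_\epsilon'(0)-1=\Theta(\epsilon^2)$, $\Psi_\epsilon''(0)=-\Theta(\epsilon)$ produce a positive discriminant does not follow from the asymptotics alone. Writing $\Psi_\epsilon'(0)-1=c_1\epsilon^2$, $\Psi_\epsilon''(0)=-c_2\epsilon$, the relevant quadratic is $\tfrac{\Psi'''(0)}{6}x^2-\tfrac{c_2\epsilon}{2}x+c_1\epsilon^2$, with discriminant $\epsilon^2\bigl[\tfrac{c_2^2}{4}-\tfrac{2}{3}\Psi'''(0)c_1\bigr]$. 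The sign hinges on $\Psi'''(0)$, which is unbounded over the class of systems with $\Psi'(0)=1$, $\Psi''(0)=0$ (for a concrete instance: $\chi$ supported on $\{0,2,3,n\}$ with $\chi(2)=\tfrac12$, $\chi(3)=\tfrac16$ and $h=1,1,2,3$ on those values has $\Psi'''(0)$ growing with $\chi(n)(n)_3$). To close the gap you must either take $c_1$ explicitly small relative to $1/\Psi'''(0)$ (which your word \emph{tuning} may intend but does not pin down), or, more robustly, shift mass of order $\epsilon^p$ with $p>2$ to tier~1 so that $\Psi_\epsilon'(0)-1$ is automatically negligible compared to $(\Psi_\epsilon''(0))^2$ as $\epsilon\to 0$; the discriminant is then positive for small $\epsilon$ regardless of $\Psi'''(0)$ and the smaller root is of order $\epsilon^{p-1}\to 0$. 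Your ``technical crux'' paragraph correctly senses this is where the work lies; note also that the paper's own proof uses a rigid $t^2$ shift to tier~1 and simply invokes ``Taylor approximation'' at this step, so it is vulnerable to the same objection, and the crux you flag is genuine rather than bookkeeping.
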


\begin{remark}\thlabel{rmk:critical.def}
  One might object that to correctly capture
the idea of a phase transition, we should insist on a single interpretation $\Tt$
satisfying $\GW{\chi_n}(\Tt)\searrow 0$, rather than a sequence of interpretations
$\Tt_n$ with $\GW{\chi_n}(\Tt_n)\searrow 0$. For example, suppose $h(\ell)\equiv 1$, $\chi=\Poi(1)$,
and $\chi_n=\Poi(1+1/n)$. Then for the set of infinite trees $\Tt_1$, we have
$\GW{\chi_n}(\Tt_1)\searrow 0$.
In fact, this more stringent requirement is equivalent to our original one, as we now show.
First, we claim that for different child distributions $\chi$ and $\chi'$, the measures
$\GW{\chi}$ and $\GW{\chi'}$ restricted to infinite trees are mutually singular.
To see this, observe that for $\GW{\chi}$-a.e.\ infinite tree $t$, the empirical distribution
of the numbers of children of the vertices at level~$n$ of the tree converges to $\chi$.
Hence the supports of $\GW{\chi}$ and $\GW{\chi'}$ on infinite trees are disjoint.

Next, any interpretation contains only infinite trees by our requirement that $h(\ell)\geq 1$.
To see this, let $\Tt$ be an interpretation, and observe that a single-vertex tree is not a member
of $\Tt$ since 0 of its 0 root-child vertices are in $\Tt$, and $h(0)\geq 1$.
Then since single-vertex trees are not members of $\Tt$, no height-$1$ tree can be in $\Tt$,
and hence no height-$2$ tree can be in $\Tt$, and so on.

Thus, if we have a sequence of interpretations $\Tt_n$ of $(\chi_n,h)$ satisfying
$\GW{\chi_n}(\Tt_n)\searrow 0$, we can stitch them together into a single interpretation
$\Tt$ defined to be equal to $\Tt_n$ on the support of $\GW{\chi_n}$.
However, we will see in \thref{thm:critical.event} that
for a large class of phase transitions, we can define a single
interpretation $\Tt$ in a more satisfying way so that $\GW{\chi_n}(\Tt)\searrow 0$.
\end{remark}

\begin{remark}\thlabel{rmk:topology}
  The details of the topology on $\topol$ are not particularly important to this paper,
  but let us define it in more detail and explain what goes wrong with a looser sense
  of convergence. To make it so that $\chi_n\to\chi$ if and only if $d_n(\chi_n,\chi)\to 0$
  for all $n\geq 0$, consider the product space $\prod_{n=0}^{\infty}\topol$ where
  the $n$th copy of $\topol$ is taken as the metric space $(\topol,d_n)$.
  Now consider the map $\iota\colon \topol\to\prod_{n=0}^{\infty}\topol$ given
  by $\iota(\chi)=(\chi,\chi,\ldots)$. We assign $\topol$ the topology
  induced by $\iota$, i.e., the one formed by pullbacks of open sets in the product space.
  
  Problems arise if we use a coarser topology on $\topol$. For example, suppose we use
  the metric $d_0$, which in this space metrizes the topology of convergence in law.
  Now, for the threshold function $h(\ell)\equiv 1$, even the measure $\delta_0$
  is critical. Indeed, define 
  \begin{align*}
    \chi_n=(1-2/n)\delta_0 +  (2/n)\delta_n.
  \end{align*}
  Then $d_0(\chi_n,\chi)\to 0$ and $\GW{\chi_n}(\Tt_1)\searrow 0$, where $\Tt_1$
  is the set of infinite rooted trees.
\end{remark}

Finally, we address question~\ref{i:comp3} and try to describe the event undergoing the continuous
phase transition. Our result, \thref{thm:critical.event},
characterizes the interpretation associated with the smallest
nonzero fixed point when its automaton distribution map $\Psi(x)$ satisfies 
$\Psi(x)>x$ on some interval $(0,\epsilon)$ for $\epsilon>0$.
This means that the result describes the event undergoing a phase transition
so long as the graph of the automaton distribution map rises above the line $y=x$ as
the phase transition occurs. This occurs in the phase transitions illustrated in
Figures~\ref{fig:emergence} and \ref{fig:phase.transition.classes.good}, but not
in the phase transition shown in Figure~\ref{fig:phase.transition.classes.bad}.
We also mention that \thref{thm:critical.event} requires $h(\ell)$ to be increasing.

\begin{figure}
  \begin{tikzpicture}
    \begin{axis}[xmin=0, xmax=1,
                 legend pos=outer north east, thick, xlabel={$x$}, ylabel={$\Psi_t(x)-x$},
                 legend cell align=left,scaled ticks=false,y tick label style={/pgf/number format/fixed},
                 cycle list name=colorchange]
      \addplot[thin,black, forget plot, no markers] {0};
      \addplot+[mark indices={55,132,251}] table {re5.dat};
      \addplot+[mark indices={42,144,248}] table {re4.dat};
      \addplot+[mark indices={31,155,245}] table {re3.dat};
      \addplot+[mark indices={21,164,242}] table {re2.dat};
      \addplot+[mark indices={11,174,238}] table {re1.dat};
      \addplot+[mark indices={184,233}] table {re0.dat};
      \legend{$t=.05$,$t=.04$,$t=.03$,$t=.02$,$t=.01$,$t=0$}

    \end{axis}
    
  \end{tikzpicture}
  \caption{Graphs of $\Psi_t(x)-x$, where $\Psi_t$ is the automaton distribution map
  of $(\chi_t,h)$ with  $\chi_t = \bigl(\frac{1}{20} - t\bigr)\delta_0 +
    \bigl(\frac12+t\bigr)\delta_2 + \frac{9}{20}\delta_5$, and $h(0)=h(2)=1$ and $h(5)=4$.
    The system $(\chi_t,h)$ is critical at $t=0$ in the sense of \thref{def:critical}, and we can
    see a fixed point emerging from $0$ as $t$ increases. Because $\Psi_t(x)\geq x$
    in a neighborhood of $0$ for $t>0$, the interpretation associated with this fixed point
    is described in \thref{thm:critical.event}. See
    \thref{ex:good.example} for more details.
    }\label{fig:phase.transition.classes.good}
\end{figure}
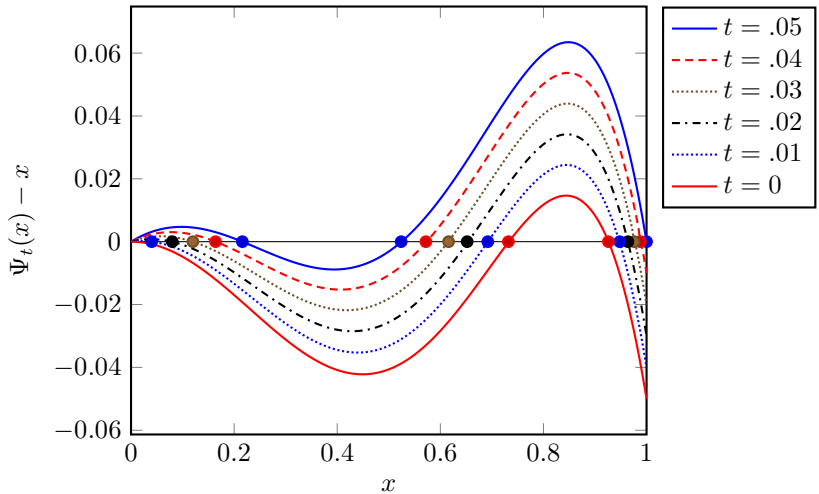

\begin{figure}
  \begin{tikzpicture}[trim axis left]
    \begin{axis}[xmin=0, xmax=1,
                 legend pos=outer north east, thick, xlabel={$x$}, ylabel={$\Psi_t(x)-x$},
                 legend cell align=left,scaled ticks=false,y tick label style={/pgf/number format/fixed},
                           height=241pt,cycle list name=colorchange]
      \addplot[thin,black, forget plot, no markers] {0};
      \addplot+[no markers] table {fr01.dat};
      \addplot+[no markers] table {fr0075.dat};
      \addplot+[no markers] table {fr005.dat};
      \addplot+[no markers] table {fr0.dat};
      \legend{$t=.01$,$t=.0075$,$t=.005$,$t=0$}
    \end{axis}  
  \end{tikzpicture}

  \begin{tikzpicture}[trim axis left]
        \begin{axis}[xmin=0, xmax=6,
                 legend pos=outer north east, thick, xlabel={$x$}, ylabel={$\Psi_t(x)-x$},
                 legend cell align=left,scaled x ticks={real:100},scaled y ticks={real:1},
                 xtick scale label code/.code={},
                 ytick scale label code/.code={${}\cdot 10^{-5}$},
                 x tick label style={/pgf/number format/fixed},y tick label style={/pgf/number format/fixed},
                 height=241pt,cycle list name=colorchange]
      \addplot[thin,black, forget plot,no markers,domain=0:6] {0};
      \addplot+[mark indices={130,238}] table {fr01zoom.dat};
      \addplot+[mark indices={97,181}] table {fr0075zoom.dat};
      \addplot+[mark indices={65,123}] table {fr005zoom.dat};
      \addplot+[no markers] table {fr0zoom.dat};
      \legend{$t=.01$,$t=.0075$,$t=.005$,$t=0$}
    \end{axis}
  \end{tikzpicture}

  \caption{Graphs of $\Psi_t(x)-x$ illustrating a continuous phase transition not satisfying
  the conditions of \thref{thm:critical.event}. Here $\Psi_t$ is the automaton distribution map
  for $(\chi_t,h)$ where $\chi_t = \frac{1}{24}\delta_0 + \bigl(\frac12 - 3t^2\bigr)\delta_2
   + \bigl(\frac16 + t\bigr)\delta_3 + \bigl(\frac{7}{24}+3t^2-t\bigr)\delta_6$,
   and $h(0)=h(2)=1$, $h(3)=2$, and $h(6)=5$. The top plot is our standard view of
   $\Psi_t(x)-x$ as in Figures~\ref{fig:emergence} and \ref{fig:phase.transition.classes.good}.
   In the bottom plot, we zoom in around $x=0$ and see that two fixed points emerge from zero
   as $t$ increases. By \thref{prop:interpretable.crit}, 
   the first fixed point for each system has no interpretation
   but the second one does. But we cannot apply \thref{thm:critical.event}
   to characterize this interpretation. See Section~\ref{sec:questions} for
   further discussion.
  }
    \label{fig:phase.transition.classes.bad}
\end{figure}

The characterization of the interpretation depends on the behavior of
the automaton distribution map near $0$. We define some terminology about this now.
For $m\geq 1$, we say that the recursive tree system $(\chi,h)$ is \emph{$m$-concordant} 
if the first $m$ derivatives of $\Psi$ at 0 match those of the function $x$. That is,
$(\chi,h)$ is $m$-concordant if $\Psi'(0)=1$ and $\Psi^{(k)}(0)=0$ for $2\leq k\leq m$.
For $m\geq 2$, we say that $(\chi,h)$ is $m$-subcordant (resp.\ $m$-supercordant)
if it is $(m-1)$-concordant and $\Psi^{(m)}(0)<0$ (resp.\ $\Psi^{(m)}(0)>0$).
We say that $(\chi,h)$ is $1$-concordant, $1$-subcordant, or $1$-supercordant 
if $\Psi'(0)=1$, $\Psi'(0)<1$, or $\Psi'(0)>1$, respectively.
When $h$ is clear from context, we will abuse notation and refer to $\chi$ itself
as being $m$-concordant, $m$-subcordant, or $m$-supercordant.
Note that assuming smoothness of $\Psi$ (which holds for $\chi\in\topol$
by \thref{lem:derivatives.converge}),
if $\Psi(x)> x$ holds on some interval $(0,\epsilon)$, then $(\chi,h)$ is $m$-supercordant
for some $m\geq 1$.

Finally, we define the notion of an admissible subtree.
We say that a subtree~$s$ of a rooted tree~$t$ is \emph{admissible} with
respect to a threshold function $h$ if $s$ contains the root of $t$
and $n_s(v)\geq h(n_t(v))$ for all vertices $v\in s$.
We can think of an admissible subtree as a sort of witness to an interpretation.
For example, consider $h(\ell)\equiv 1$, the threshold function encoding a property
that holds for a tree if and only if it holds for at least one of the tree's root-child subtrees.
As we mentioned earlier, this recursive tree
system has two fixed points when $\chi$ has mean greater than $1$, and the interpretation
of the nonzero fixed point is survival of the Galton--Watson tree.
A subtree $S$ of the Galton--Watson tree $T$ is admissible
if and only if $S$ has no leaves (i.e., $n_S(v)\geq 1$ for all $v\in S$).
An admissible subtree thus serves as a witness to 
the Galton--Watson tree being infinite. 
Indeed, the event of $T$ being infinite could equally well be described
as $T$ having an admissible subtree (see \thref{lem:highest.fp} for a generalization).

\begin{thm}\thlabel{thm:critical.event}
  Let $\chi\in\topol$ have more than one point of support. 
  Consider the recursive tree system $(\chi,h)$ with automaton
  distribution map $\Psi$, and assume that $h(\ell)$ is increasing in $\ell$. 
  Suppose that $(\chi,h)$ is $m$-supercordant, and let $x_0$ be the
  smallest nonzero fixed point of $\Psi$.
  Then $x_0$ is interpretable, and its associated interpretation
  is the event that
  $T\sim\GW{\chi}$ contains an admissible subtree $S$ in which all but finitely many
  vertices $v$ satisfy $n_S(v)\leq m$.
\end{thm}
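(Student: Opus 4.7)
The plan is to verify three claims in sequence: that $x_0$ is interpretable, that $\Tt^*$ satisfies the recursive property \eqref{eq:interpretation.def} for $(\chi,h)$, and that $\GW{\chi}(\Tt^*)=x_0$.

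First I will establish interpretability of $x_0$. Because $(\chi,h)$ is $m$-supercordant, $\Psi(x)>x$ on a right-neighborhood of $0$; since $x_0$ is the smallest positive fixed point and $\Psi$ is smooth, continuity forces $\Psi(x)>x$ throughout $(0,x_0)$ while $\Psi(x_0)=x_0$, so $\Psi'(x_0)\leq 1$. \thref{prop:interpretable.crit} then supplies a unique (up to $\GW{\chi}$-null sets) interpretation $\Tt_0$ of $x_0$. Next I verify $\Tt^*$ is itself an interpretation by checking \eqref{eq:interpretation.def} in both directions: the forward direction restricts an admissible witness $S$ for $t\in\Tt^*$ to each root-child subtree, inheriting both admissibility and the cofinite degree condition, while the backward direction glues witnesses for any $h(\ell(t))$ root-child subtrees in $\Tt^*$ to the root of $t$, with the root becoming at most one additional exceptional vertex.

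For the probability computation, I decompose $\Tt^*=\bigcup_{K\geq -1}\Tt^*_K$, where $\Tt^*_K$ requires all exceptional vertices (those $v\in S$ with $n_S(v)>m$) to lie at depth at most $K$ in $T$, so in particular $\Tt^*_{-1}$ demands $n_S(v)\leq m$ for every $v\in S$. For $K\geq 0$, the recursive property gives $p_K\coloneqq\GW{\chi}(\Tt^*_K)=\Psi(p_{K-1})$, reducing the computation to identifying the base case $p_{-1}=\GW{\chi}(\Tt^*_{-1})$. A greedy König-style pruning argument (given any admissible subtree, at each vertex retain exactly $h(n_t(v))\leq m$ of its children in the subtree) identifies $\Tt^*_{-1}$ with the maximum interpretation of the truncated recursive tree system $(\chi,h^{(m)})$, where $h^{(m)}(\ell)=h(\ell)$ when $h(\ell)\leq m$ and $h^{(m)}(\ell)$ is unsatisfiable otherwise (with $\ell_m$ the largest $\ell$ satisfying $h(\ell)\leq m$). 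Hence $p_{-1}$ is the largest fixed point of the automaton distribution map $\Psi^{(m)}$ of $(\chi,h^{(m)})$.

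The crux of the proof is to show $p_{-1}\in(0,x_0]$. For the lower bound, \thref{rmk:only.tierm} ensures that the first $m$ derivatives of $\Psi^{(m)}$ and $\Psi$ agree at $0$, so $(\chi,h^{(m)})$ is also $m$-supercordant; combined with $\Psi^{(m)}(1)\leq 1$, the intermediate value theorem yields $p_{-1}>0$. The upper bound $p_{-1}\leq x_0$ is the principal technical obstacle. The pointwise inequality $\Psi^{(m)}\leq\Psi$ together with $\Psi(x_0)=x_0$ gives $\Psi^{(m)}(x_0)\leq x_0$ and furnishes a fixed point of $\Psi^{(m)}$ in $(0,x_0]$, but ruling out \emph{additional} fixed points of $\Psi^{(m)}$ lying above $x_0$ requires more care: one must exploit the monotonicity of $h$ and carefully analyze the sign of $\Psi-x$ on intervals between consecutive fixed points of $\Psi$, handling separately the degenerate case $\Psi'(x_0)=1$ where the graph of $\Psi$ is tangent to the diagonal. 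Once $p_{-1}\in(0,x_0]$ is proven, the iterates $p_K=\Psi(p_{K-1})$ stay in $(0,x_0]$ and form a monotone increasing sequence (since $\Psi(x)\geq x$ on $(0,x_0]$, with strict inequality on $(0,x_0)$) converging to the only fixed point of $\Psi$ in that range, namely $x_0$. Therefore $\GW{\chi}(\Tt^*)=\lim_K p_K=x_0$, and uniqueness of interpretations from \thref{prop:interpretable.crit} identifies $\Tt^*$ with $\Tt_0$ up to $\GW{\chi}$-null sets.
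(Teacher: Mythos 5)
Your overall architecture closely mirrors the paper's: verify $\Tt^*$ is an interpretation, truncate to keep only the tiers with $h(\ell)\leq m$, identify the base event (no exceptional vertices anywhere) with an admissible subtree of the truncated tree, show this probability equals the truncated system's largest fixed point, and iterate $\Psi$ upward to the smallest fixed point of the original system. The reduction $p_K=\Psi(p_{K-1})$, the use of \thref{rmk:only.tierm} to transfer $m$-supercordance to the truncation, and the convergence argument once $p_{-1}\in(0,x_0]$ are all sound and match the paper's reasoning (the paper's $\overline{\Tt}_n$ and your $\Tt^*_K$ are the same filtration in slightly different clothing).

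However, the step you flag as the ``principal technical obstacle''---proving $p_{-1}\leq x_0$, equivalently controlling the fixed-point set of the truncated automaton distribution map---is a genuine gap, not a detail. You correctly observe that the pointwise bound $\bar\Psi\leq\Psi$ and $\Psi(x_0)=x_0$ give $\bar\Psi(x_0)\leq x_0$, and you correctly observe that this does not rule out fixed points of $\bar\Psi$ above $x_0$. Your proposed remedy---``carefully analyze the sign of $\Psi-x$ on intervals between consecutive fixed points, exploiting monotonicity of $h$''---does not work as stated: $\Psi-x$ can change sign multiple times above $x_0$ (this is exactly the situation in the paper's Figures~\ref{fig:phase.transition.classes.good} and~\ref{fig:phase.transition.classes.bad}), and the pointwise inequality alone cannot prevent $\bar\Psi-x$ from doing the same. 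The paper closes this gap with \thref{prop:truncations}, which establishes the much stronger fact that the $m$-truncation has a \emph{unique} nonzero fixed point. This is where the bulk of the paper's technical work lives: the decomposition of $m$-critical measures into primitive ones (\thref{lem:decompose}), the explicit construction of primitive $m$-critical measures via the permutation martingale (\thref{lem:crit.seq}, \thref{lem:mtg}), the positivity and convexity consequences (\thref{prop:critical.Phi}), and the monotonicity lemma \thref{lem:binexpr}. Without this machinery or some replacement for it, your proof does not go through; $p_{-1}\leq x_0$ is not a ``handle the degenerate tangency case'' issue but the theorem's central difficulty.

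One smaller remark: your notation $\Psi^{(m)}$ collides with the paper's use of $\Psi^{(m)}$ for the $m$th derivative, which is confusing in a document that relies heavily on \thref{thm:derivatives}. The paper's choice of $\bar\Psi$ for the truncated map avoids this.
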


Though it falls outside this narrative of understanding phase transitions, we
mention that in all cases, the \emph{highest} fixed point of a recursive tree system
has a similar characterization:

\begin{prop}\thlabel{lem:highest.fp}
  Let $\chi\in\topol$ have more than one point of support. 
  Consider the recursive tree system $(\chi,h)$ with automaton
  distribution map $\Psi$.
  Let $x_1$ be the
  largest fixed point of $\Psi$.
  Then $x_1$ is interpretable,
  and its associated interpretation is the event that
  $T\sim\GW{\chi}$ contains
  an admissible subtree.
\end{prop}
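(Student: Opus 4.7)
The plan is to set
\[
  \Tt^* = \{t : t \text{ contains an admissible subtree with respect to } h\},
\]
verify that $\Tt^*$ is an interpretation of $(\chi,h)$, and then show that $\GW{\chi}(\Tt^*) = x_1$. Uniqueness up to $\GW{\chi}$-null sets will follow from \thref{prop:interpretable.crit} in the case $x_1 \in (0,1)$; the cases $x_1 = 0$ and $x_1 = 1$ are handled trivially (any interpretation of measure $0$ or $1$ is $\GW{\chi}$-a.s.\ empty or the whole space, respectively).

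To check the interpretation property, I argue that a tree $t$ contains an admissible subtree if and only if at least $h(\ell(t))$ of its root-child subtrees do. If $S$ is an admissible subtree of $t$, then the $n_S(\text{root}) \geq h(\ell(t))$ children of the root in $S$ each carry a piece of $S$ that is an admissible subtree of the corresponding root-child subtree of $t$. Conversely, given admissible subtrees in any $h(\ell(t))$ root-child subtrees, one may graft them at the root of $t$ to produce an admissible subtree of $t$.

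For the probability I introduce the finite-depth approximation $A_n$: the event that $T$ has a subtree $S$ containing the root with $n_S(v) \geq h(n_T(v))$ for every $v \in S$ at depth strictly less than $n$, and no condition on vertices at depth $n$. The same recursive decomposition gives $y_n := \GW{\chi}(A_n)$ satisfying $y_0 = 1$ and $y_{n+1} = \Psi(y_n)$. Since $\Psi$ is continuous and monotone nondecreasing on $[0,1]$ with $\Psi(1) \leq 1$, the sequence $y_n$ is nonincreasing and converges to a fixed point of $\Psi$; since $y_n \geq x_1$ for every $n$ by induction from $y_0 = 1 \geq x_1$, the limit equals $x_1$.

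The main (modest) technical step is to show $\Tt^* = \bigcap_n A_n$ up to $\GW{\chi}$-null sets. The inclusion $\Tt^* \subseteq \bigcap_n A_n$ is immediate. For the reverse I work on the almost sure event that $T$ is locally finite: if $T \in A_n$ for every $n$, then the set $P_n$ of subtrees of $T$ of depth at most $n$ witnessing $T \in A_n$ is finite and nonempty, and restriction to depth $n$ sends $P_{n+1}$ into $P_n$. König's lemma applied to the resulting locally finite rooted tree of prefixes produces an infinite compatible chain $S_0 \subseteq S_1 \subseteq \cdots$, and its union is a full admissible subtree of $T$, so $T \in \Tt^*$. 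Consequently $\GW{\chi}(\Tt^*) = \lim_n y_n = x_1$.
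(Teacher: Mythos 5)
Your proof is correct, and it takes a genuinely different route from the paper's. The paper first invokes \cite[Proposition~5.6]{JPS} to establish abstractly that the largest fixed point $x_1$ always has an interpretation; it then shows that the event $\Tt_1$ of containing an admissible subtree is an interpretation, and that $\Tt\subseteq\Tt_1$ for \emph{every} interpretation $\Tt$ (by extracting an admissible subtree from $\Tt$), concluding by a maximality argument that $\GW{\chi}(\Tt_1)=x_1$. You instead compute $\GW{\chi}(\Tt^*)$ directly: the finite-depth events $A_n$ satisfy $y_{n+1}=\Psi(y_n)$ starting from $y_0=1$, the monotone iteration decreases to the largest fixed point $x_1$, and the K\"onig's lemma argument identifies $\bigcap_n A_n$ with $\Tt^*$ on the locally finite trees. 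Your approach is more self-contained since it avoids the citation to \cite{JPS} for interpretability of $x_1$, at the cost of the compactness step needed to pass from finite-depth witnesses to a genuine admissible subtree; the paper's approach yields the extra structural fact that $\Tt_1$ contains every interpretation, which is useful context but not needed for the statement as written. Both establish that $\Tt^*$ (resp.\ $\Tt_1$) is an interpretation by the same root-child grafting argument.
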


We close the section with an example of a family of systems $(\chi_t,h)$ undergoing
a continuous phase transition, shown in Figure~\ref{fig:phase.transition.classes.good}.
\begin{example}\thlabel{ex:good.example}
  Define $\chi_t$ and $h$ by
  \begin{align*}
  \chi_t(\ell)=\begin{cases}
    1/20 - t & \text{for $\ell=0$,}\\
    1/2 + t & \text{for $\ell=2$,}\\
    9/20 & \text{for $\ell=5$,}\\
  \end{cases}\qquad\qquad \text{and} \qquad\qquad
  h(\ell)=\begin{cases}
      1 & \text{for $\ell=0$,}\\
      1 & \text{for $\ell=2$,}\\
      4 & \text{for $\ell=5$,}
  \end{cases}
      \end{align*}
  and let $\Psi_t$ be the automaton distribution map of the recursive tree system $(\chi_t,h)$.
  By \thref{thm:derivatives},
  \begin{align*}
    \Psi'_t(0) &= 2\chi_t(2) = 1 + 2t,\\
    \Psi''_t(0) &= -2\chi_t(2) = -1-2t.
  \end{align*}
  The system $(\chi_0,h)$ is critical by \thref{thm:criticality},
  since $\Psi'_0(0)=1$ and $\Psi''_0(0)=-1$.
  For $t>0$, the system is $1$-supercordant (i.e., $\Psi'_t(0)>1$).
  
  In Figure~\ref{fig:phase.transition.classes.good}, we show the graphs of $\Psi_t(x)-x$,
  so that fixed points of $\Psi_t$ appear as roots. 
  At $t=0$, the system has two nonzero fixed points, at $x\approx.73$ and $x\approx.93$.
  As $t$ grows,
  a fixed point $x_0(t)$ emerges from $0$. 
  We have $\Psi'_t(x_0(t))<1$, evident from the graph of $\Psi_t(x)-x$ where $x_0$ is a down-crossing
  root.
  By \thref{prop:interpretable.crit}, the fixed point $x_0(t)$ is interpretable.
  By \thref{thm:critical.event}, the interpretation of $(\chi_t,h)$
  associated with $x_0(t)$ is the event~$\Tt_0$ that
  $T\sim\GW{\chi}$ has an admissible subtree $S$ in which all but finitely many vertices
  $v\in S$ satisfy $n_S(v)\leq 1$. Since $h(\ell)>0$ for all $\ell\geq 0$, an admissible subtree
  has no leaves, and thus all but finitely many vertices have $n_S(v)=1$.

  Recall that $S\subseteq T$ is admissible if it contains the root of $T$ and for each $v$ in $S$,
  we have $n_S(v)\geq h(n_T(v))$. In this case, if a vertex~$v$ has $5$ children in $T$,
  it can only be in $S$ if at least $4$ of those children are also in $S$;
  if $v$ has $2$ children in $T$, it can only be in $S$ if at least one of those children
  is in $S$; and if it has no children in $T$, it cannot be in $S$. Thus on the event
  $\Tt_0$, the tree $T$ contains an admissible subtree where all but finitely many vertices
  have $2$ children in $T$.
  
  We can see directly that $\GW{\chi_0}(\Tt_0)=0$ by observing that the subtree of
  $T\sim\GW{\chi_0}$ consisting only of the vertices with $2$ or fewer children forms a critical
  Galton--Watson tree (its child distribution is $\frac12\delta_0+\frac12\delta_2$).
  Thus it has no chance of being infinite. Consequently, for any vertex~$v$ in $T$, 
  there is no chance that $T(v)$ contains an admissible subtree consisting of only vertices
  with $2$ children in $T$.
  
  Viewing the graph in Figure~\ref{fig:phase.transition.classes.good}, we observe that
  $\Psi_t$ has derivative greater than $1$ at its middle fixed point, which means
  that it has no interpretation by \thref{prop:interpretable.crit}. The largest fixed point
  has the interpretation that $T\sim\GW{\chi_t}$ contains an admissible subtree,
  by \thref{lem:highest.fp}.
\end{example}

\subsection{Related work}

A finite random structure like a random graph will not typically experience a true phase transition.
The analogous concept in this area is a \emph{sharp threshold} for some property, meaning that
the property holds with probability that transitions from $0$ to $1$ in a parameter
window that tends to $0$ as the system grows.
The motivating phase transitions of this paper---the 
continuous phase transition for survival and the first-order 
transition for existence of a binary subtree in a Galton--Watson tree---have analogues
for Erd\H{o}s--R\'enyi random graphs in this sense: the existence of giant component \cite[Chapter~11]{AS},
and the existence of a $3$-core \cite{PSW,Riordan}. The first of these examples is essentially
a continuous phase transition while the second is essentially first-order. For example, when one reaches
the threshold for a $3$-core to exist, it immediately makes up a positive linear fraction of the graph's
vertices. These examples have been studied extensively, though not in the sort of general framework
considered in this paper.

As for more general studies of phase transitions and sharp thresholds, in finite systems
there is a line of inquiry centered on giving
conditions for a property to have a sharp threshold \cite{SS,LS,FK,F}. Many of these results
use the theory of Boolean functions and the Margulis--Russo formula (see
\cite{GS} for background), also used in this paper for the proof
of \thref{prop:interpretable.crit} via \cite{JPS}.
These results on sharp thresholds for finite random structures have been applied in impressive
ways to prove results on phase transitions for infinite systems \cite{BR,DC}.
There is also considerable nonrigorous literature by physicists on distinguishing between
continuous and first-order phase transitions (see for example \cite{explosive,EHdOF}).

For Galton--Watson trees specifically, Podder and Spencer investigate probabilities
of events that can be described in first-order logic (no connection to first-order phase transitions)
in \cite{PS1,PS2}. These events have the same sort of recursive description as the events
considered here, generally with more than two states. But their fundamental result
\cite[Theorem~1.2]{PS2} is that these events never undergo phase transitions at all.
In \cite{HM}, Holroyd and Martin consider various two-player games whose moves are modeled by directed steps 
on a Galton--Watson tree. They investigate events of a player
winning the game in various senses, which have a similar recursive nature as the events
considered here, and they give results about the continuity or discontinuity of phase
transitions for these events \cite[Theorem~5]{HM}.

\subsection{Sketches of proofs}
The proofs of \thref{thm:derivatives,thm:criticality} are fairly straightforward.
For \thref{thm:derivatives}, we express $\Psi(x)$ as a sum of polynomials
and carry out combinatorial calculations to compute their derivatives.
Proving \thref{thm:criticality} is just a matter of Taylor approximation of
$\Psi(x)$ near $x=0$ combined with \thref{prop:interpretable.crit}, our interpretability
criterion from \cite{JPS}. Section~\ref{sec:analytic}
is devoted to these two proofs.

The proof of \thref{thm:critical.event} is more involved.
Given an $m$-supercordant $(\chi,h)$ with smallest nonzero fixed point $x_0$, 
we truncate $\chi$ to form a new child distribution
$\bar\chi$, setting $\bar\chi(\ell)=0$ for $\ell$ in tiers~$m+1$ and above.
From \thref{thm:derivatives}, we know that the system $(\bar\chi,h)$ remains $m$-supercordant.
The hard part of the proof is to show
that $(\bar\chi,h)$ has only a single nonzero fixed point.
We carry this out by decomposing $\bar\chi$ as a mixture of what we call \emph{primitive $m$-critical
measures} that have nice combinatorial properties.
From \thref{lem:highest.fp}, we know that the single nonzero fixed point of $(\bar\chi,h)$ is
associated with the interpretation that $\barT\sim\GW{\bar\chi}$ contains an admissible subtree.
Embedding $\barT$ into $T\sim\GW{\chi}$, we can view this event as $T$ containing
an admissible subtree made up only of vertices with $\ell$ children where $h(\ell)\leq m$.
This event is not an interpretation of $h$---it does not satisfy the recursive property described
by $h$---but it is a subevent of the correct interpretation for $x_0$, which we are able to 
exploit to prove \thref{thm:critical.event}.

\section{Analytic properties of the fixed-point equation}\label{sec:analytic}

We start with the proof of \thref{prop:interpretable.crit}. This proof
belongs more in \cite{JPS} than here, but we give it so that it is spelled out somewhere.
\begin{proof}[Proof of \thref{prop:interpretable.crit}]
  The proof is just a matter of tying together some more general results from
  \cite{JPS}.
  The fixed point $x_0$ has an interpretation
  if and only if the associated pivot tree is subcritical or critical \cite[Theorem~1.7]{JPS}.
  (See \cite{JPS} for the meaning of \emph{pivot tree}.)
  This pivot tree is subcritical or critical if and only if $\Psi'(x_0)\leq 1$ \cite[Lemma~5.3]{JPS}.
\end{proof}

\begin{remark}
  The proof of \cite[Lemma~5.3]{JPS} contains a step of computing $\Psi'(x)$
  by interchanging the order of a derivative and an expectation.
  This step is not justified in the proof, but it is easily shown to hold if
  $\chi$ has finite expectation, one of our assumptions for \thref{prop:interpretable.crit}.
\end{remark}

Now we start our work toward the proofs of \thref{thm:derivatives,thm:criticality}.
We define the polynomials
\begin{align}
  \Be{n}{k} &= \binom{n}{k}x^k(1-x)^{n-k},\label{eq:Be}\\
  \Bg{n}{k} &= \sum_{j=k}^n\Be{n}{k}.\label{eq:Bg}
\end{align}
For $x\in[0,1]$, we have $\Be{n}{k} = \P[\Bin(n,x)=k]$
and $\Bg{n}{k}=\P[\Bin(n,x)\geq k]$.
Note that we take $\binom{0}{0}=1$ and $\binom{n}{k}=0$ if $k>n$
or $k<0$. Using this notation and assuming $h(\ell)\geq 1$, we have 
$\Psi(x) = \sum_{\ell=1}\chi(\ell)\Bg{\ell}{h(\ell)}$.
Thus we can understand the derivatives of $\Psi$ by working out
the derivatives of $\Bg{n}{k}$, which we do now.
\begin{prop}\thlabel{prop:B.deriv}
  For $m\geq 1$,
  \begin{align}\label{eq:B.deriv}
    \frac{d^{m}}{dx^{m}}\Bg{n}{k} &= (n)_m\sum_{j=1}^{m} (-1)^{j+m}\binom{m-1}{j-1}\Be{n-m}{k-j}.
  \end{align}  
\end{prop}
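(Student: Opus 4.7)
The plan is to induct on $m$, using two elementary derivative identities for the Bernstein-like polynomials $\Be{n}{k}$ and $\Bg{n}{k}$, together with Pascal's rule.

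First I would establish the base case $m=1$ and simultaneously record the single-derivative identity for $\Be{n}{k}$. Differentiating term-by-term and applying $k\binom{n}{k}=n\binom{n-1}{k-1}$ and $(n-k)\binom{n}{k}=n\binom{n-1}{k}$ yields
\begin{align*}
  \frac{d}{dx}\Be{n}{k} = n\bigl(\Be[x]{n-1}{k-1}-\Be[x]{n-1}{k}\bigr).
\end{align*}
Summing this from $k$ to $n$ gives a telescoping sum, leaving
\begin{align*}
  \frac{d}{dx}\Bg{n}{k} = n\,\Be[x]{n-1}{k-1},
\end{align*}
which matches \eqref{eq:B.deriv} at $m=1$ since the inner sum has the single term with $j=1$ and $\binom{0}{0}=1$.

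Next I would carry out the inductive step. Assume \eqref{eq:B.deriv} holds for $m$ and differentiate once more, pulling the constant $(n)_m$ out and applying the $\Be{\cdot}{\cdot}$ derivative identity above to each $\Be{n-m}{k-j}$; this produces a factor of $(n-m)$, giving $(n)_{m+1}$, and replaces each $\Be{n-m}{k-j}$ by $\Be{n-m-1}{k-j-1}-\Be{n-m-1}{k-j}$. I would then reindex the first of the resulting sums by $j\mapsto j-1$ so that both sums refer to $\Be{n-m-1}{k-j}$ for a common range of $j$, and collect coefficients. The coefficient of $\Be{n-m-1}{k-j}$ becomes
\begin{align*}
  (-1)^{j+m+1}\Bigl(\binom{m-1}{j-2}+\binom{m-1}{j-1}\Bigr) = (-1)^{j+(m+1)}\binom{m}{j-1}
\end{align*}
by Pascal's identity, with the boundary conventions $\binom{m-1}{-1}=\binom{m-1}{m}=0$ absorbing the edge cases $j=1$ and $j=m+1$ cleanly. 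This is exactly the claimed formula at $m+1$.

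The argument is essentially routine once the two derivative identities are in hand; the only step that requires a moment's care is the bookkeeping in the inductive step, particularly verifying that the reindexing and Pascal collapse handle the endpoints $j=1$ and $j=m+1$ correctly. I would write out the base case and the reindexing explicitly and leave the telescoping and Pascal collapse as short computations.
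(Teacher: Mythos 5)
Your proposal is correct and follows essentially the same route as the paper: the same single-derivative identity for $\Be{n}{k}$, the same telescoping for the base case, and the same induction via reindexing and Pascal's rule, including the handling of the boundary terms $j=1$ and $j=m+1$.
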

\begin{proof}
  By direct calculation,
  \begin{align}\label{eq:Be.deriv}
    \frac{d}{dx}\Be{n}{k} &= n\bigl( \Be{n-1}{k-1} - \Be{n-1}{k} \bigr).
  \end{align}
  Hence
  \begin{align*}
    \frac{d}{dx}\Bg{n}{k} &= n\sum_{j=k}^n\bigl(\Be{n-1}{j-1} - \Be{n-1}{j}\bigr),
  \end{align*}
  and this sum telescopes to yield $n\Be{n-1}{k-1}$,
  establishing the $m=1$ case.
  
  Now assume the result for $m$ and we prove it for $m+1$. Differentiating the right-hand side
  of \eqref{eq:B.deriv} using \eqref{eq:Be.deriv} gives
  \begin{align*}
    \frac{d^{m+1}}{dx^{m+1}}\Bg{n}{k} &= (n)_m\sum_{j=1}^m(-1)^{j+m}\binom{m-1}{j-1}(n-m)
              \bigl(\Be{n-m-1}{k-j-1} - \Be{n-m-1}{k-j}\bigr)\\
              &= (n)_{m+1}\Biggl(\sum_{j=2}^{m+1}(-1)^{j+m+1}\binom{m-1}{j-2}
              \Be{n-m-1}{k-j} \\&\qquad\qquad\qquad\qquad- \sum_{j=1}^m(-1)^{j+m}\binom{m-1}{j-1}\Be{n-m-1}{k-j}\Biggr)\\
              &= (n)_{m+1}\sum_{j=1}^{m+1}(-1)^{j+m+1}\binom{m}{j-1}\Be{n-m-1}{k-j},
  \end{align*}
  using the identity $\binom{n}{k-1}+\binom{n}{k} = \binom{n+1}{k}$ in the last line.
\end{proof}

\begin{lemma}\thlabel{lem:derivatives.converge}
  Let $\Psi$ be the automaton distribution map for a system $(\chi,h)$.
  Let $\chi_n$ be the truncation of $\chi$ to $n$, i.e., the probability measure satisfying
  $\chi_n(\ell)=\chi(\ell)$ for $\ell\in\{1,\ldots,n\}$ and $\chi_n(0) = \chi\{0,n+1,n+2,\ldots\}$.
  Let $\Psi_n$ be the automaton distribution map for $(\chi_n,h)$.
  If $\chi$ has finite $m$th moment, then $\Psi^{(m)}$ exists and is
  the uniform limit of $\Psi_n^{(m)}$ on $[0,1]$ as $n\to\infty$.
\end{lemma}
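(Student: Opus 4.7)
The plan is to express both $\Psi$ and $\Psi_n$ as explicit series in the polynomials $\Bg{\ell}{h(\ell)}$, differentiate term by term using \thref{prop:B.deriv}, and control everything with the Weierstrass $M$-test via the finite $m$th moment assumption. Since $h(\ell)\geq 1$, we have $\Bg{0}{h(0)}\equiv 0$, so the extra mass that $\chi_n$ places at $0$ contributes nothing; hence
\begin{align*}
  \Psi(x)-\Psi_n(x) = \sum_{\ell>n}\chi(\ell)\Bg{\ell}{h(\ell)}.
\end{align*}
In particular, the uniform bound $\Bg{\ell}{h(\ell)}\in[0,1]$ already gives $\Psi_n\to\Psi$ uniformly on $[0,1]$.

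Next, I would use \thref{prop:B.deriv} to control each derivative: since $\abs{\Be{n-m}{k-j}}\leq 1$ and $\sum_{j=1}^{m}\binom{m-1}{j-1}=2^{m-1}$,
\begin{align*}
  \left|\frac{d^{m}}{dx^{m}}\Bg{\ell}{h(\ell)}\right| \leq 2^{m-1}(\ell)_m
\end{align*}
uniformly in $x\in[0,1]$. Because $\chi$ has finite $m$th moment,
\begin{align*}
  \sum_{\ell=1}^{\infty}\chi(\ell)\cdot 2^{m-1}(\ell)_m \leq 2^{m-1}\E[L^m]<\infty,
\end{align*}
so the Weierstrass $M$-test shows that the series $\sum_{\ell=1}^{\infty}\chi(\ell)\frac{d^{k}}{dx^{k}}\Bg{\ell}{h(\ell)}$ converges uniformly on $[0,1]$ for every $k\in\{1,\ldots,m\}$ (since $(\ell)_k\leq (\ell)_m$ for $\ell\geq m$, up to finitely many initial terms). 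Applying the standard theorem that uniform convergence of derivatives justifies term-by-term differentiation, iterated $m$ times starting from $\Psi_n\to\Psi$ uniformly, we conclude that $\Psi$ is $m$-times differentiable and $\Psi_n^{(k)}\to\Psi^{(k)}$ uniformly for each $k\leq m$.

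Finally, uniform convergence at the top order $m$ follows from the same tail estimate:
\begin{align*}
  \sup_{x\in[0,1]}\bigl|\Psi^{(m)}(x)-\Psi_n^{(m)}(x)\bigr|
  \leq 2^{m-1}\sum_{\ell>n}\chi(\ell)(\ell)_m,
\end{align*}
and the right-hand side is the tail of a convergent series, hence tends to $0$. There is no real obstacle here beyond bookkeeping; the only thing to watch is that one must not differentiate $\Psi$ directly under the sum without justification, which is exactly what the $M$-test provides via the moment bound. This same estimate, incidentally, will let us take limits freely in subsequent arguments that require swapping a derivative with a sum over $\ell$.
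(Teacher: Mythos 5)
Your proof is correct and takes essentially the same route as the paper's: both expand $\Psi_n$ in the polynomials $\Bg{\ell}{h(\ell)}$, bound $\bigl|\frac{d^k}{dx^k}\Bg{\ell}{h(\ell)}\bigr|$ by $2^{k-1}(\ell)_k$ via \thref{prop:B.deriv}, invoke the finite $m$th moment to get uniform convergence of each derivative series, and then apply the standard theorem on differentiating uniform limits iteratively through the orders $0,\ldots,m$.
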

\begin{proof}
  We have $\Psi_n(x)=\sum_{\ell=1}^n\chi(\ell)\Bg{\ell}{h(\ell)}$ and
  $\Psi(x)=\lim_{n\to\infty}\Psi_n(x)$. It suffices to show that $\Psi_n^{(k)}$ converges uniformly
  on $[0,1]$
  to some limit as $n\to\infty$ 
  for $0\leq k\leq m$ \cite[Theorem~7.17]{Rudin}. 
  For $k\geq 1$, we apply \eqref{eq:B.deriv} and bound
  $\Be{n}{k}$ by $1$ to obtain
  \begin{align*}
    \abs[\bigg]{\frac{d^k}{dx^k}\Bg{\ell}{h(\ell)}} &\leq (\ell)_k \sum_{j=1}^k\binom{k-1}{j-1}=(\ell)_k2^{k-1}\leq \ell^k2^k.
  \end{align*}
  The same statement for $k=0$, that $\abs{\Bg{\ell}{h(\ell)}}\leq 1$, also holds.
  Hence for all $0\leq k\leq m$,
  \begin{align*}
    \abs[\Bigg]{\sum_{\ell=n+1}^{\infty}\chi(\ell)\frac{d^k}{dx^k}\Bg{\ell}{h(\ell)}}
      &\leq 2^{k}\sum_{\ell=n+1}^{\infty}\chi(\ell)\ell^k,
  \end{align*}
  which vanishes as $n\to\infty$ by our assumption that $\chi$ has finite $m$th moment.
  This demonstrates that $\Psi_n^{(k)}$ converges uniformly as $n\to\infty$, completing the proof.
\end{proof}

\begin{proof}[Proof of \thref{thm:derivatives}]
  Let $\chi_n$ be the truncation of $\chi$ to $n$ and let $\Psi_n$ be the automaton
  distribution map of $(\chi_n,h)$, as in the previous lemma.
  Applying \thref{prop:B.deriv} to each summand of $\Psi_n(x)=\sum_{\ell=1}^n\chi(\ell)\Bg{\ell}{h(\ell)}$ gives
  \begin{align*}
    \Psi_n^{(m)}(x) &= \sum_{\ell=1}^n \chi(\ell)(\ell)_m\sum_{j=1}^m(-1)^{j+m}\binom{m-1}{j-1}\Be{\ell-m}{h(\ell)-j}.
  \end{align*}
  Observing that $\Be[0]{n}{k}=\1\{\text{$n\geq 0$ and $k=0$}\}$, we obtain
  \begin{align*}
    \Psi_n^{(m)}(0) &= \sum_{\ell=1}^n \chi(\ell)(\ell)_m(-1)^{h(\ell)+m}\binom{m-1}{h(\ell)-1}.
  \end{align*}
  Applying \thref{lem:derivatives.converge}, we take $n\to\infty$ to prove
  \eqref{eq:derivatives1}. Equation~\eqref{eq:derivatives2} follows by
  grouping together the terms with $h(\ell)=j$.
\end{proof}

Note that this theorem can fail without the moment assumption:
\begin{example}\thlabel{ex:no.moments}
   Let $\chi(\ell) = 1/\ell(\ell-1)$ for $k\geq 2$, a measure whose expectation is infinite.
   Let $h(\ell)\equiv 2$. Observe that
   \begin{align*}
     \P\bigl[ \Bin(\ell,x) \leq 1 \bigr] = (1-x)^\ell + \ell (1-x)^{\ell-1}x
       &= (1-x)^\ell + \ell (1-x)^{\ell-1}\bigl( 1-(1-x)\bigr)\\
       &= \ell(1-x)^{\ell-1}-(\ell-1)(1-x)^\ell.
   \end{align*}
   Now, let $L\sim\chi$ and compute
   \begin{align*}
     \Psi(x) = \P\bigl[ \Bin(L,x)\geq h(\ell) \bigr] 
     &= 1 - \P\bigl[ \Bin(L,x)\leq 1 \bigr] \\
             &= 1 - \sum_{\ell=2}^{\infty}\chi(\ell)\P\bigl[ \Bin(\ell,x)\leq 1 \bigr]\\
      &= 1 - \sum_{\ell=2}^{\infty}\Bigl( (\ell-1)(1-x)^{\ell-1} - \ell(1-x)^\ell \Bigr).
   \end{align*}
   The sum in the last line telescopes and is equal to $1-x$ for $x\in[0,1]$, yielding $\Psi(x)=x$.
   But this means that $\Psi'(0)=1$ even though \thref{thm:derivatives} would give
   $\Psi'(0)=0$.
\end{example}
We revisit this recursive tree system in \thref{ex:no.moments2} and
show how we arrived at it.

\begin{lemma}\thlabel{lem:continuity}
  For any $m\geq 1$, the function $\chi\mapsto \Psi^{(m)}(0)$ is continuous on $\topol$.
\end{lemma}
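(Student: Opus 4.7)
The plan is to leverage the explicit formula from \thref{thm:derivatives}, which expresses $\Psi^{(m)}(0)$ as a linear functional of $\chi$ whose coefficients are polynomially bounded in $\ell$. Since the topology on $\topol$ was engineered to control exactly this sort of polynomial moment, continuity should fall out of a termwise estimate.

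First I would note that for $\chi\in\topol$, all moments are finite, so \thref{thm:derivatives} applies and gives
\[
\Psi^{(m)}(0) = \sum_{\ell=m}^{\infty}(-1)^{m+h(\ell)}\binom{m-1}{h(\ell)-1}\chi(\ell)(\ell)_m.
\]
The same identity holds for $\widetilde\Psi^{(m)}(0)$ whenever $\widetilde\chi\in\topol$. Taking the difference and applying the triangle inequality termwise yields
\[
\bigl|\Psi^{(m)}(0)-\widetilde\Psi^{(m)}(0)\bigr| \leq \sum_{\ell=m}^{\infty}\binom{m-1}{h(\ell)-1}(\ell)_m\bigl|\chi(\ell)-\widetilde\chi(\ell)\bigr|.
\]
Bounding $\binom{m-1}{h(\ell)-1}\leq 2^{m-1}$ and $(\ell)_m\leq \ell^m$, the right-hand side is at most $2^{m-1}d_m(\chi,\widetilde\chi)$.

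Now if $\chi_n\to\chi$ in $\topol$, then by definition $d_m(\chi_n,\chi)\to 0$, so the above bound forces $\Psi_n^{(m)}(0)\to\Psi^{(m)}(0)$, establishing continuity. I do not anticipate a genuine obstacle here: the only delicate issue is that the formula from \thref{thm:derivatives} requires a finite $m$th moment, which is built into the definition of $\topol$. The role of the strong topology is precisely to make the linear functional $\chi\mapsto\sum_\ell \chi(\ell)(\ell)_m$ continuous; under coarser topologies (e.g.\ weak convergence, as flagged in \thref{rmk:topology}) mass can escape to infinity along high tiers and the statement would fail.
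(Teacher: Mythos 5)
Your proof is correct and takes essentially the same route as the paper: both invoke \thref{thm:derivatives} and then make a termwise comparison controlled by $d_m$. The only cosmetic difference is that you apply \eqref{eq:derivatives1} directly with the explicit bound $\binom{m-1}{h(\ell)-1}(\ell)_m\leq 2^{m-1}\ell^m$, whereas the paper applies \eqref{eq:derivatives2} and verifies each tier-sum $\sum_{\ell\in\tier(j)}\chi(\ell)(\ell)_m$ is continuous separately; the two are the same estimate after regrouping.
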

\begin{proof}
  Suppose $\chi_n\to\chi$ in $\topol$ and let $\Psi_n$ denote the automaton distribution
  map of $\chi_n$. We need to show that $\Psi_n^{(m)}(0)\to \Psi^{(m)}(0)$. 
  We have
  \begin{align*}
    \abs[\Bigg]{\sum_{\ell\in\tier(j)} \chi_n(\ell)(\ell)_m - \sum_{\ell\in\tier(j)} \chi(\ell)(\ell)_m}
     &\leq \sum_{\ell=1}^{\infty}\abs{\chi_n(\ell)-\chi(\ell)}\ell^m,
  \end{align*}
  which vanishes as $n\to\infty$ by definition of convergence in $\topol$. 
  Hence, by \eqref{eq:derivatives2} from \thref{thm:derivatives}, we have
  $\Psi_n^{(m)}(0)\to \Psi^{(m)}(0)$.
\end{proof}

\begin{proof}[Proof of \thref{thm:criticality}]
  First, suppose that $\Psi'(0)=1$ and $\Psi''(0)< 0$.
  We need to show that for any $\epsilon$, there exist child distributions arbitrarily
  close to $\chi$ in $\topol$ with an interpretable fixed point in $(0,\epsilon)$.
  To show this, we perturb $\chi$ slightly to push its automaton distribution map up,
  creating a new fixed point very close to $0$.
  We accomplish this by transferring some small amount of mass to tier~$1$, which will
  cause $\Psi'(0)$ to increase, as in the phase transition
  shown in Figure~\ref{fig:phase.transition.classes.good}.
  
  We will choose $k$ from tier~$1$ and take $j$ to be either $0$ or some element of a different
  tier, and then transfer mass from $j$ to $k$. First, we must justify that we can find $j$
  and $k$.
  From \eqref{eq:Psi'} and $\Psi'(0)=1$, we know that tier~$1$ is nonempty.
  Choose $k$ arbitrarily from it.
  If $\chi(0)>0$, take $j=0$. If $\chi(0)=0$, then $\chi$ has expectation strictly
  greater than $1$. By \eqref{eq:Psi'}, tier~$1$ does not contain all the mass of $\chi$,
  and therefore some other tier is nonempty; choose $j$ from it.
  
  Now, for $t>0$, define $\chi_t$ by starting with $\chi$ and then
  shifting mass $t$ from $j$ to $k$. Let $\Psi_t$ be the automaton distribution
  map of $\chi_t$. Fix $\epsilon>0$.
  Since $\chi_t\to\chi$ in $\topol$, we just need to show that for sufficiently small $t$,
  the map $\Psi_t$ has an interpretable fixed point in $(0,\epsilon)$.
  By \thref{thm:derivatives},
  \begin{align}
    \Psi'_t(0) &= \Psi'(0)+  kt=1+kt,\label{eq:Psi'1}\\\intertext{and}
    \Psi''_t(0) &< \Psi''(0)<0.\label{eq:Psi'2}
  \end{align}
  By \eqref{eq:Psi'1}, we have $\Psi_t(x)>x$ for sufficiently small $x>0$.
  Choosing $t$ to be small enough relative to $\Psi''(0)$ and 
  applying Taylor approximation, we can force $\Psi_t(x)<x$ for some $x<\epsilon$,
  implying the existence of a fixed point $x_0$ with $\Psi'_t(x_0)<1$, which
  is hence interpretable by \thref{prop:interpretable.crit}.
  
  Now, suppose $\Psi'(0)=1$ and $\Psi''(0)=0$. Again we must show the existence of child distributions
  arbitrarily close to $\chi$ with an interpretable fixed point in $(0,\epsilon)$. We take the same approach as above,
  perturbing $\chi$ to increase $\Psi'(0)$ and decrease $\Psi''(0)$, but we must be careful
  about the rates of increase and decrease.
  Choose $k$ from tier~$1$ as before. From $\Psi''(0)=0$ and \eqref{eq:Psi''}, we know that $\chi$
  assigns positive mass to tier~$2$; choose $j$ from it. Now define
  \begin{align*}
    \chi_t(\ell) = \begin{cases}
      \chi(\ell) + t^2 & \text{if $\ell=k$,}\\
      \chi(\ell) - t & \text{if $\ell=j$,}\\
      \chi(0) + t - t^2 & \text{if $\ell=0$,}\\
      \chi(\ell) & \text{otherwise,}
    \end{cases}
  \end{align*}
  for small values of $t$, and let $\Psi_t$ be the automaton distribution map of $\chi_t$.
  By \eqref{eq:Psi'} and \eqref{eq:Psi''},
  \begin{align*}
    \Psi'_t(0) &= \Psi'(0)+t^2k =  1 + t^2 k,\\
    \Psi''_t(0) &= \Psi''(0)-t j(j-1) -t^2k(k-1)=-t j(j-1) -t^2k(k-1).
  \end{align*}
  Thus $\Psi_t(x)>x$ immediately to the right of $0$,
  and Taylor approximation again shows that when $t$ is sufficently small $\Psi_t(x)<x$
  for some $x<\epsilon$. This proves the existence of a fixed point $x_0\in(0,\epsilon)$
  with $\Psi'_t(x_0)<1$.
  
  Now we consider the converse. Suppose
  $\Psi'(0)\neq 1$. 
  By \thref{lem:continuity},
  we can choose a neighborhood $U\subseteq\topol$ around $\chi$ so that for all $\pi\in U$,
  the automaton distribution map of $(\pi,h)$ has first derivative at $0$ uniformly bounded
  away from $1$ and second derivative at zero uniformly bounded. By Taylor approximation,
  all these maps have no fixed points on $(0,\epsilon)$ for some small $\epsilon>0$, demonstrating
  that $\chi$ is not critical.
  
  Last, suppose $\Psi'(0)=1$ and $\Psi''(0)>0$. By \thref{lem:continuity}, we can choose
  a neighborhood $U\subseteq\topol$ around $\chi$ such that all automaton distribution maps $\Psi_\pi$ of
  $(\pi,h)$ for $\pi\in U$ have second derivative at zero uniformly bounded above $0$
  and third derivative uniformly bounded. Hence for some $\epsilon>0$,
  each map $\Psi_\pi$  is strictly convex on
  $[0,\epsilon]$. The function $\Psi_\pi(x) - x$ is also strictly convex and hence
  has at most two roots on $[0,\epsilon]$. One of them is at $0$. By convexity, any other
  root of $\Psi_\pi(x)-x$ on $[0,\epsilon]$ must occur with the graph crossing the $x$-axis from below
  to above as $x$ increases. Thus $\Psi_\pi$ has derivative greater than $1$ at this fixed point,
  and by \thref{prop:interpretable.crit} it has no interpretation. Since no systems $(\pi,h)$
  for $\pi\in U$ have an interpretable fixed point in $(0,\epsilon)$, the measure $\chi$ is not critical.
\end{proof}

\section{Truncations}

Let the \emph{maximum threshold} of $(\chi, h)$ be the maximum value of $h(\ell)$
over all $\ell$ satisfying $\chi(\ell)>0$.
Define the \emph{$m$-truncation} of $\chi$
as the child distribution $\bar\chi$ where for $\ell\geq 1$,
\begin{align*}
  \bar\chi(\ell) &= 
  \begin{cases}
    \chi(\ell) &  \text{if $h(\ell)\leq m$},\\
    0 & \text{if $h(\ell)>m$},
  \end{cases}
\end{align*}
with $\bar\chi(0)$ set to make $\bar\chi$ a probability measure.
In other words, $\bar\chi$ is obtained from $\chi$ by lopping off tiers $m+1$ and higher,
shifting their weight to $0$.
Recall from \thref{rmk:only.tierm} that
the automaton distribution maps of $(\chi,h)$
and $(\bar\chi,h)$ match to $m$ derivatives at $0$.
Thus $(\bar\chi,h)$ is a system of maximum threshold $m$ or less whose automaton distribution
map approximates that of $(\chi,h)$ near $0$.

The point of this section is to prove the following result, which is a major step in
proving \thref{thm:critical.event}:
\begin{prop}\thlabel{prop:truncations}
  Let $\chi\in\topol$ and let $h(\ell)$ be increasing.
  Suppose that $(\chi, h)$ is $m$-supercordant. Then its $m$-truncation has
  a unique nonzero fixed point.
\end{prop}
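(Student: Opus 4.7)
My plan has two parts: first, inherit $m$-supercordance from $\Psi$ to $\bar\Psi$ to obtain a nonzero fixed point; and second, prove uniqueness by decomposing $\bar\chi$ into combinatorially tractable pieces whose fixed-point structure is easier to analyze.

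For existence, \thref{rmk:only.tierm} gives $\bar\Psi^{(k)}(0)=\Psi^{(k)}(0)$ for $0\le k\le m$, so $(\bar\chi,h)$ is again $m$-supercordant. Combined with $\bar\Psi(1)\le 1$, Taylor expansion at $0$ (valid by \thref{lem:derivatives.converge}) makes $\bar\Psi(x)-x$ positive just to the right of $0$ and nonpositive at $x=1$, so the intermediate value theorem produces at least one fixed point in $(0,1]$. Because $h$ is nondecreasing and $\bar\chi$ annihilates all tiers above $m$, the positive part of the support of $\bar\chi$ lies in $\bigcup_{j=1}^m\tier(j)$, so $(\bar\chi,h)$ has maximum threshold at most $m$.

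For uniqueness, I would introduce \emph{primitive} measures: probability measures of the form $\mu=\mu(0)\delta_0+\sum_{j\in J}\mu(a_j)\delta_{a_j}$ with $J\subseteq\{1,\dots,m\}$ and $a_j\in\tier(j)$, subject to $(\mu,h)$ being $(m-1)$-concordant. By \thref{thm:derivatives}, the $(m-1)$ concordance constraints reduce to a lower-triangular linear system in the weights $\mu(a_j)$ whose unique solution is nonnegative for suitable choices of tier representatives. Primitives split naturally into $m$-\emph{critical} primitives (with $\Psi_\mu^{(m)}(0)=0$) and $m$-\emph{supercordant} primitives (with $\Psi_\mu^{(m)}(0)>0$). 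I would then decompose $\bar\chi$ as a finite convex combination of such primitives via a Carath\'eodory-style argument on the convex set of $(m-1)$-concordant probability measures supported in $\{0\}\cup\bigcup_{j=1}^m\tier(j)$, whose extreme rays are precisely the primitives. The key per-primitive lemma asserts that (i) every $m$-critical primitive $\mu$ satisfies $\Psi_\mu(x)\le x$ on $[0,1]$ with equality only at the endpoints, proved by factoring $\Psi_\mu(x)-x=-x^{m+1}q_\mu(x)$ and showing $q_\mu\ge 0$ on $[0,1]$ via an explicit binomial-coefficient identity forced by the concordance equations; and (ii) every $m$-supercordant primitive $\nu$ has a unique nonzero fixed point in $(0,1]$, proved directly from the small support of $\nu$ (at most $m+1$ points) and a polynomial analysis of the degree-$(\max\operatorname{supp}\nu)$ polynomial $\Psi_\nu(x)-x$.

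Granting the decomposition and the per-primitive lemma, uniqueness for $\bar\Psi$ follows from linearity of $\chi\mapsto\Psi_\chi$: $\bar\Psi(x)-x$ equals a convex combination of the nonpositive $m$-critical contributions and the $m$-supercordant contributions, and a quantitative strengthening of part (i) prevents extra zero crossings being introduced by the $m$-critical pieces. The main obstacle I foresee is part (ii) of the per-primitive lemma, which is essentially a low-dimensional instance of the proposition itself; it must be handled by a direct polynomial argument exploiting the bounded support of $\nu$, rather than a circular appeal to the proposition. A secondary difficulty is making the Carath\'eodory decomposition work when some tier of $\bar\chi$ is empty, in which case primitives of smaller support must be used and their enumeration requires case analysis; and verifying nonpositivity in part (i) is where the ``nice combinatorial properties'' of primitives alluded to in the introductory sketch must carry the real weight.
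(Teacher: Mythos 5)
Your high-level strategy—decompose the truncation into combinatorially simple pieces and argue per-piece—is in the right spirit, but the plan has two genuine gaps, and each of them is precisely where the actual proof must do its real work.

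First, the combination step doesn't go through as stated. If $\bar\Psi - x$ is a convex combination of functions, some with a single nonzero zero-crossing and some nonpositive on $[0,1]$, nothing prevents the convex combination from having multiple crossings: a strictly negative dip from an $m$-critical piece added to an $m$-supercordant piece that crosses from positive to negative can create new oscillation. Your ``quantitative strengthening of part (i)'' would have to amount to an actual argument about the shape of these polynomials, and it is unclear what that argument is. The paper avoids this entirely by never decomposing the map $\bar\Psi$ directly into a convex combination of maps. Instead it picks a representative $r$ in tier $m$, defines $\varphi(x) = \bigl(x - \sum_{\ell<r}\bar\chi(\ell)\Bg{\ell}{h(\ell)}\bigr)/\Bg{r}{h(r)}$, observes via \thref{lem:fp.criterion} that the nonzero fixed points of $(\bar\chi,h)$ are exactly $\varphi^{-1}(\bar\chi(r))$, and proves $\varphi$ is strictly increasing. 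Monotonicity of a single auxiliary function gives uniqueness cleanly without ever having to count zeros of a convex combination.

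Second, your per-primitive lemma part (ii)---uniqueness for $m$-supercordant primitives with small support---is, as you yourself note, a smaller instance of the proposition itself, and you have not given an argument for it. This is not a peripheral difficulty: proving it in a way that is not circular requires exactly the structural input the paper develops. Concretely, the paper does not decompose the $m$-truncation into $(m-1)$-concordant primitives; it passes to the $(m-1)$-truncation, decomposes \emph{that} (via \thref{lem:decompose}) into $(m-1)$-critical primitives $\crit(\ell_1,\dots,\ell_{m-1})$, uses the optional-stopping/martingale construction from \thref{lem:crit.seq} to show that for each such primitive the deficit $x-\Psi(x)$ is a nonnegative combination of binomial tail polynomials $\Bg{\ell_{m-1}}{r'}$ with $r'\ge m$, and then appeals to \thref{lem:binexpr} for strict monotonicity of the ratios $\Bg{p}{j}/\Bg{r}{k}$. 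All of this feeds into the monotonicity of $\varphi$. Your approach reduces the problem to an object with bounded support but offers no mechanism to actually control the zeros of the resulting polynomial; the paper's mechanism is the $\varphi$-monotonicity argument, and your proposal would need some analogue of it to close the loop. Finally, the Carath\'eodory step needs care: the paper constructs the decomposition explicitly (with coefficients given in \eqref{eq:a.ell} and validated by \thref{lem:decompose.multiples}) rather than quoting a general extremality argument, and verifying that your primitives are in fact the extreme points of the relevant convex set of probability measures is its own nontrivial task, especially given the normalization constraint.
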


The key to this proposition is a thorough understanding of $m$-concordant
recursive tree systems of maximum threshold $m$, which we call \emph{$m$-critical}.
If $(\chi,h)$ has at most one element in each tier, we call it \emph{primitive}.
We will work extensively with primitive $m$-critical recursive tree systems.
An example of such a system is
\begin{align*}
  \chi(\ell)&=\begin{cases}
    2/5 & \text{for $\ell=0$,}\\
    1/2 & \text{for $\ell=2$,}\\
    1/20 & \text{for $\ell=5$},\\
    1/20 & \text{for $\ell=6$,}
  \end{cases}, &
  h(\ell)&=\begin{cases}
      1 & \text{for $\ell=0$,}\\
      1 & \text{for $\ell=2$,}\\
      2 & \text{for $\ell=5$},\\
      3 & \text{for $\ell=6$.}
  \end{cases}
\end{align*}
We can check using \thref{thm:derivatives}
that this system is 3-concordant (i.e., it has $\Psi'(0)=1$ and $\Psi''(0)=\Psi^{(3)}(0)=0$).
It is 3-critical because it is 3-concordant and has maximum threshold $3$,
and it is primitive because tiers~1, 2, and 3 each have one element (recall that the tiers
exclude $0$ by definition).

These recursive tree systems have many good properties. 
In \thref{lem:decompose}, we show that $m$-critical systems
decompose into mixtures (i.e., convex combinations) of primitive $m$-critical systems.
And for the primitive $m$-critical systems $(\chi,h)$, the automaton
distribution map has a useful connection with a martingale $(X_n)_{n\geq 1}$
we describe now.

First, we define a time-inhomogenous Markov chain $(R_n)_{n\geq 1}$ as follows.
Let $R_1=1$. Then, conditional on $R_n$, let 
\begin{align}
  R_{n+1} &=\begin{cases}
    R_n & \text{with probability $\frac{n+1-R_n}{n+1}$,}\\
    R_n+1 & \text{with probability $\frac{R_n}{n+1}$.}
  \end{cases}\label{eq:Rn}
\end{align}
There is an alternative construction of $(R_n)_{n\geq 1}$ that yields some insight.
Start with the permutation $\sigma_1$ of length~$1$.
At each step, form $\sigma_{n+1}$ from $\sigma_n$ by viewing $\sigma_n$
in one-line notation and inserting the digit $n+1$ uniformly at random into the $n+1$ possible locations.
For example, if $\sigma_3=213$, then $\sigma_4$ is equally likely to be each of $4213$,
$2413$, $2143$, and $2134$. Then let $R_n=\sigma_n^{-1}(1)$, the location of $1$
in the one-line notation of $\sigma_n$. 
When we insert $n+1$ into $\sigma_n$ to form $\sigma_{n+1}$, it has 
probability~$(n+1-R_n)/(n+1)$ of landing to the right of $1$ and
probability~$R_n/(n+1)$ of landing to the left of $1$, matching the dynamics
given in \eqref{eq:Rn}. We note one consequence of this perspective:
\begin{lemma}\thlabel{lem:uniform}
  The random variable $R_n$ is uniformly distributed over $\{1,\ldots,n\}$.
\end{lemma}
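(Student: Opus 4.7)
The plan is to prove this by a short induction on $n$, working directly from the transition probabilities in \eqref{eq:Rn}. The base case $n=1$ is trivial since $R_1=1$ deterministically and $\{1\}$ is the one-point sample space. For the inductive step, I assume $R_n$ is uniform on $\{1,\ldots,n\}$ and compute $\P[R_{n+1}=k]$ for each $k\in\{1,\ldots,n+1\}$ by conditioning on $R_n$. By \eqref{eq:Rn}, a value $R_{n+1}=k$ arises either from $R_n=k$ (staying put, with probability $(n+1-k)/(n+1)$) or from $R_n=k-1$ (shifting right, with probability $(k-1)/(n+1)$). Combining these with the inductive hypothesis $\P[R_n=j]=1/n$ for $j\in\{1,\ldots,n\}$, the two contributions collapse to $1/(n+1)$ after handling the boundary cases $k=1$ and $k=n+1$ (where only one of the two transitions is available).

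Alternatively—and this is the cleaner conceptual route, so I would mention it in a remark—one can invoke the classical fact that the random-insertion procedure used to build $\sigma_n$ produces a uniformly random permutation of $\{1,\ldots,n\}$. This is itself a straightforward induction: if $\sigma_n$ is uniform on $\Ss_n$, then for any $\tau\in\Ss_{n+1}$, the unique pair $(\sigma,i)$ with $\sigma\in\Ss_n$ and $i\in\{1,\ldots,n+1\}$ whose insertion produces $\tau$ has probability $\frac{1}{n!}\cdot\frac{1}{n+1}=\frac{1}{(n+1)!}$. Once $\sigma_n$ is uniform, the position of any fixed symbol (in particular $1$) is uniform on $\{1,\ldots,n\}$, which gives the lemma.

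There is no real obstacle here; the only thing to be careful about is the boundary bookkeeping in the direct induction, which I would handle by using the convention $\P[R_n=0]=\P[R_n=n+1]=0$ so that the single formula
\begin{equation*}
\P[R_{n+1}=k] = \P[R_n=k]\cdot\frac{n+1-k}{n+1} + \P[R_n=k-1]\cdot\frac{k-1}{n+1}
\end{equation*}
covers every $k\in\{1,\ldots,n+1\}$ uniformly. Given how short the argument is, I would write out the direct induction as the proof and relegate the permutation-insertion viewpoint to the surrounding discussion, since the alternative construction has already been introduced in the text and its uniform-permutation property is what motivates the lemma in the first place.
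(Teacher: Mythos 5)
Your proof is correct. You actually sketch both available arguments, but your preferred route — the direct induction on the transition probabilities in \eqref{eq:Rn} — differs from the paper's, which proves by induction that the growing permutation $\sigma_n$ is uniform on $\Ss_n$ and then reads off that $R_n = \sigma_n^{-1}(1)$ is uniform. Both are short inductions and both are sound: your boundary handling via the convention $\P[R_n=0]=\P[R_n=n+1]=0$ correctly collapses the three cases $k=1$, $1<k<n+1$, $k=n+1$ into the single formula, since $\frac{1}{n}\cdot\frac{n+1-k}{n+1}+\frac{1}{n}\cdot\frac{k-1}{n+1}=\frac{1}{n+1}$. Your direct computation is marginally more self-contained, since it never touches the permutation machinery. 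The paper's choice to route through the permutation picture is not just aesthetic, though: that picture is used again later (the stopping time $T$ in \thref{lem:crit.seq} and the combinatorial remark about the sets $A_k$), so establishing the uniform-permutation fact here does double duty. Either proof is acceptable; the paper simply chose the one that pays forward.
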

\begin{proof}
  First, we argue by induction that $\sigma_n$ is a uniformly random permutation of length $n$,
  with $n=1$ as the trivial base case. To extend the induction,
  let $\tau$ be an arbitrary permutation of length~$n$ and let $\tau'$ be the permutation of
  of length~$n-1$ obtained by deleting $n$ from the one-line notation form of $\tau$.
  Then $\sigma_n$ can be equal to $\tau$ only if $\sigma_{n-1}=\tau'$, and we compute
  \begin{align*}
    \P[\sigma_n=\tau] = \P[\sigma_{n-1}=\tau']\,\P[\sigma_n=\tau\mid\sigma_{n-1}=\tau'] = \frac{1}{(n-1)!}\cdot\frac{1}{n}=\frac{1}{n!}
  \end{align*}
  by the inductive hypothesis and definition of $\sigma_n$.
  
  To complete the proof, observe that $R_n\eqd\sigma_n^{-1}(1)$ and is hence
  uniform over $\{1,\ldots,n\}$.
\end{proof}

Finally, we give the sequence $(X_n)_{n\geq 1}$ and show that it
is a martingale. We define it in terms of $R_n$ and the polynomials
$\Bg{n}{k}$ defined in \eqref{eq:Bg}.

\begin{lemma}\thlabel{lem:mtg}
  Fix $x\in[0,1]$ and define
  \begin{align*}
    X_n&=\Bg{n}{R_n}.
  \end{align*}
  Then $(X_n)_{n\geq 1}$ is a martingale
  adapted to the filtration $\Ff_n=\sigma(R_1,\ldots,R_n)$.
\end{lemma}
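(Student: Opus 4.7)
The plan is to compute $\E[X_{n+1}\mid\Ff_n]$ directly from the Markov transition \eqref{eq:Rn}. Conditional on $R_n = k$, we have
$$\E[X_{n+1}\mid\Ff_n] = \frac{n+1-k}{n+1}\Bg{n+1}{k} + \frac{k}{n+1}\Bg{n+1}{k+1},$$
so the desired identity $\E[X_{n+1}\mid\Ff_n] = \Bg{n}{R_n}$ reduces to the polynomial identity
$$(n+1-k)\Bg{n+1}{k} + k\Bg{n+1}{k+1} = (n+1)\Bg{n}{k}$$
valid for all $x \in [0,1]$ and all integers $k$ with $1 \leq k \leq n+1$.

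To prove this identity, I would apply the one-step recursion $\Bg{n+1}{j} = x\Bg{n}{j-1} + (1-x)\Bg{n}{j}$, which follows from writing $\Bin(n+1,x) = \Bin(n,x) + \Ber(x)$ and conditioning on the added Bernoulli. Substituting $j=k$ and $j=k+1$ on the left-hand side and collecting the coefficients of $\Bg{n}{k-1}$, $\Bg{n}{k}$, and $\Bg{n}{k+1}$, one finds that the coefficient of $\Bg{n}{k}$ simplifies to $n+1$ and the remaining two terms cancel via the elementary identity $(n+1-k)\binom{n}{k-1} = k\binom{n}{k}$. There is no real obstacle here; the calculation is mechanical once the target identity is in hand.

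A more conceptual alternative, which I would mention briefly as motivation, is to realize $X_n$ as a Doob martingale. Let $U_1, U_2, \ldots$ be i.i.d.\ uniform on $[0,1]$, and let $R_n'$ denote the rank of $U_1$ among $U_1,\ldots,U_n$. The chain $(R_n')_{n\geq 1}$ has the same transitions as $(R_n)_{n\geq 1}$, since conditional on $R_n' = k$ the new sample $U_{n+1}$ falls below $U_1$ with probability $\E[U_1 \mid R_n'=k] = k/(n+1)$, matching \eqref{eq:Rn}. Conditional on $R_n' = k$, the value $U_1$ is the $k$th order statistic of $n$ uniforms, so it has a $\mathrm{Beta}(k, n+1-k)$ distribution, and the classical identity between the regularized incomplete beta function and the binomial tail gives $\P[U_1 \leq x \mid R_1',\ldots,R_n'] = \Bg{n}{R_n'}$. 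This exhibits $(X_n)$ as the Doob martingale for $\1\{U_1 \leq x\}$, from which the martingale property is immediate.
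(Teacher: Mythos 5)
Your proposal is correct, and your primary route (compute $\E[X_{n+1}\mid\Ff_n]$ from the transition kernel of $(R_n)$, then reduce to a polynomial identity) is structurally identical to the paper's. The identity you isolate,
\begin{align*}
  (n+1-k)\Bg{n+1}{k} + k\Bg{n+1}{k+1} = (n+1)\Bg{n}{k},
\end{align*}
is an algebraic rearrangement of the identity the paper proves (equation \eqref{eq:bin.identity}). The difference is how the identity is established: the paper gives a probabilistic argument (take $n+1$ trials, delete one uniformly at random, compare tail events), whereas you propose a mechanical expansion via $\Bg{n+1}{j} = x\Bg{n}{j-1} + (1-x)\Bg{n}{j}$. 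That expansion does close the argument, though your description compresses a step: after substituting you get nonzero coefficients on $\Bg{n}{k-1}$ and $\Bg{n}{k+1}$, and you must further write $\Bg{n}{k\pm1}$ in terms of $\Bg{n}{k}$ plus/minus a $\Be{n}{\cdot}$ term before the coefficient of $\Bg{n}{k}$ collapses to $n+1$ and the leftover $\Be$-terms cancel via $(n+1-k)\binom{n}{k-1}=k\binom{n}{k}$. This is a presentation wrinkle, not a gap.

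The Doob martingale interpretation you sketch is a genuinely different and more conceptual route, and it is not in the paper. Realizing $R_n$ as the rank of $U_1$ among i.i.d.\ uniforms $U_1,\dots,U_n$, noting that $(U_1\mid R_n=k)\sim\mathrm{Beta}(k,n+1-k)$, and invoking the incomplete-beta/binomial-tail identity shows $X_n=\P[U_1\leq x\mid\Ff_n]$, so the martingale property is automatic. This not only sidesteps the algebra but also explains \emph{why} a martingale should be lurking here. It is worth noting that the paper's permutation construction of $(R_n)$ (inserting $n+1$ uniformly into a growing permutation) already hints at this order-statistic picture without exploiting it for the martingale proof, so your observation makes that connection explicit. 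Either of your two arguments, written out carefully, would be an acceptable replacement for the paper's proof.
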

\begin{proof}
  First, we claim that
  \begin{align}\label{eq:bin.identity}
    \Bg{n}{k} = \Bg{n+1}{k+1} + \frac{n+1-k}{n+1}\Be{n+1}{k}.
  \end{align}
  To see this, consider $n+1$ independent trials with success probability $x$.
  Then eliminate one at random and consider the event that there are at least
  $k$ successes in the remaining $n$ trials. 
  The left-hand side of \eqref{eq:bin.identity} is the probability of this event, which
  occurs if either (a) there were $k+1$ or more successes in the original set
  of trials, or (b) there were exactly $k$ successes but the trial removed was a failure.
  Then (a) occurs with probability $\Bg{n+1}{k+1}$ and (b) with probability $\frac{n+1-k}{n+1}\Be{n+1}{k}$,
  proving \eqref{eq:bin.identity}.
  
  Now, we compute
  \begin{align*}
    \E [X_{n+1}\mid \Ff_n] &= \E\Biggl[ \sum_{k=R_{n+1}}^{n+1} \Be{n+1}{k} \Biggmid \Ff_n \Biggr]\\
      &= \frac{n+1-R_n}{n+1} \sum_{k=R_n}^{n+1}\Be{n+1}{k} 
        + \frac{R_n}{n+1} \sum_{k=R_n+1}^{n+1}\Be{n+1}{k}\\
      &=  \Bg{n+1}{R_n+1} + \biggl(\frac{n+1-R_n}{n+1}\biggr) \Be{n+1}{R_n} = \Bg{n}{R_n},
  \end{align*}
  applying \eqref{eq:bin.identity} in the last line.
  Hence $\E[X_{n+1}\mid\Ff_n] = X_n$, confirming that the sequence is a martingale.
\end{proof}

We will make use of this martingale by applying the optional stopping theorem
to assert that $x=X_1=\E X_T$ for various stopping times $T$. This expectation has the form
\begin{align*}
  \E X_T = \sum_{n=1}^{\infty}\P[T=n]\,\P[\Bin(n,x)\geq R_n\mid T=n].
\end{align*}
For a recursive tree system $(\chi,h)$ where $\chi(n)=\P[T=n]$, if $T$ is chosen so that
$R_n=h(n)$ when $T=n$, this expression is nearly the same as $\Psi(x)$.
The following example is off track for the section, but it illustrates how to use this idea.
\begin{example}[\thref{ex:no.moments} revisited]\thlabel{ex:no.moments2}
  In \thref{ex:no.moments}, we showed that the system $(\chi,h)$ with $\chi(n)=1/n(n-1)$
  for $n\geq 2$ and $h(n)\equiv 2$ has automaton distribution map $\Psi(x)=x$ by an explicit
  calculation. Now we give a new proof that demonstrates how we arrived at the example.
  Let $T$ be the first time the chain $R_n$ jumps from $1$ to $2$; that is,
  $T=\min\{n\colon R_n=2\}$. By the chain's dynamics, for $n\geq 2$
  \begin{align*}
    \P[T\geq n] &= \P[R_{n-1}=1] = \prod_{k=2}^{n-1}\frac{k-1}{k}= \frac{1}{n-1},\\\intertext{and}
    \P[T=n\mid T\geq n] &= \P[R_n=2\mid R_{n-1}=1] = \frac1n.
  \end{align*}
  Putting these together, we have $\P[T=n] = 1/n(n-1)$ for $n\geq 2$. 
  Also observe that $T<\infty$ with probability~1,
  since $\P[T\geq n]\to 0$ as $n\to\infty$.
  
  Now, we set $\chi(n)=\P[T=n]$ and $h(n)\equiv 2$. Since $\abs{X_n}\leq 1$, the optional stopping
  theorem applies and yields
  \begin{align*}
    x = \E X_T = \sum_{n=2}^{\infty}\P[T=n]\,\P\bigl[\Bin(n,x)\geq R_n\mid T=n\bigr] = \sum_{n=2}^{\infty}\chi(n)\P\bigl[ \Bin(n,x)\geq 2 \bigr] = \Psi(x).
  \end{align*}
\end{example}

The next result uses the optional stopping theorem in the same way to
compute primitive $m$-critical systems with a given set of support. Recall that
\emph{primitive} means that each tier
has at most one element, i.e., the values $h(\ell)$ are distinct
for all $\ell\geq 1$ in the support of the child distribution.

\begin{lemma}\thlabel{lem:crit.seq}
  For any sequence of integers $1\leq\ell_1<\cdots<\ell_m$, there is a unique probability
  measure $\chi$ supported within $\{0,\ell_1,\ldots,\ell_m\}$ 
  so that the system $(\chi,h)$ with $h(\ell_k)=k$ is $m$-concordant.
  We denote this measure $\chi$ by the notation $\crit(\ell_1,\ldots,\ell_m)$.
  For this system $(\chi,h)$ with automaton distribution map $\Psi$:
  \begin{enumerate}[(a)]
    \item $\chi$ satisfies
        \begin{align*}
          \chi(\ell_1) &= \frac{1}{\ell_1},\\
          \chi(\ell_k) &=
          \frac{1}{(\ell_k)_k}\sum_{j=1}^{k-1}(-1)^{k+j+1}\binom{k-1}{j-1}\chi(\ell_j)(\ell_j)_k,
          \qquad \text{for $2\leq k\leq m$.}
        \end{align*} \label{i:crit.seq.combint}
  \end{enumerate}
  If $\ell_1=1$, then $\chi=\delta_1$ and $\Psi(x)=x$.
  If $\ell_1\geq 2$, then the following properties hold as well:
  \begin{enumerate}[(a), resume]
    \item $\chi$ places positive mass on each of $\{0,\ell_1,\ldots,\ell_m\}$;
        in particular, $(\chi,h)$ has maximum threshold $m$
        and is therefore $m$-critical; \label{i:crit.seq.fullsupport}
    \item $\bigl(x-\Psi(x)\bigr)/\chi(0)$
      is a convex combination of the polynomials
      \begin{align*}
        \Bg{\ell_m}{r},\quad r\in\{m+1,\ldots,\ell_m\};
      \end{align*}\label{i:crit.seq.b}
    \item $(\chi,h)$ is $(m+1)$-subcordant. \label{i:crit.seq.subc}
  \end{enumerate}  
\end{lemma}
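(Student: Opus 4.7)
The plan is to combine \thref{thm:derivatives}, which gives uniqueness and the explicit recursion in~\ref{i:crit.seq.combint}, with a martingale construction based on \thref{lem:mtg}, which will realize $\chi$ as a hitting distribution of the chain $(R_n)$ and thereby deliver all of \ref{i:crit.seq.fullsupport}--\ref{i:crit.seq.subc} at once. For uniqueness: applying \thref{thm:derivatives} to a primitive system with $h(\ell_k)=k$, the $m$-concordance conditions $\Psi'(0)=1$ and $\Psi^{(k)}(0)=0$ for $2\le k\le m$ become a triangular linear system in $\chi(\ell_1),\ldots,\chi(\ell_m)$; isolating the $j=k$ term in the $k$-th equation gives precisely the recursion in~\ref{i:crit.seq.combint}, and $\chi(0)$ is then forced by normalization. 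If $\ell_1=1$, this yields $\chi(1)=1$, and because $(1)_k=0$ for $k\ge 2$, an immediate induction in the recursion shows $\chi(\ell_k)=0$ for all $k\ge 2$; hence $\chi=\delta_1$ and $\Psi(x)=\Bg{1}{1}=x$.

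Now assume $\ell_1\ge 2$. The main construction is to let $T$ be the first index $\ell_k$ (with $1\le k\le m$) at which $R_{\ell_k}=k$, setting $T=\ell_m$ if no such hit occurs, and to denote by $A$ the event that no hit occurs. Since $(R_n)$ is nondecreasing with $R_1=1$, on $A$ we have $R_{\ell_m}\ge m+1$: by induction, if $R_{\ell_{k-1}}\ge k$ (base case $R_{\ell_1}\ge 2$) and $R_{\ell_k}\ne k$, monotonicity forces $R_{\ell_k}\ge k+1$. Applying optional stopping to the bounded martingale $X_n=\Bg{n}{R_n}$ at the bounded time $T\le\ell_m$ yields
\begin{align*}
x \;=\; \E X_T \;=\; \sum_{k=1}^m p_k\,\Bg{\ell_k}{k} \;+\; \sum_{r=m+1}^{\ell_m} q_r\,\Bg{\ell_m}{r},
\end{align*}
where $p_k=\P[T=\ell_k,\,R_{\ell_k}=k]$ and $q_r=\P[A,\,R_{\ell_m}=r]$. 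Setting $\tilde\chi(\ell_k)=p_k$ and $\tilde\chi(0)=\P[A]$ defines a probability measure supported in $\{0,\ell_1,\ldots,\ell_m\}$. Since $\Bg{n}{r}=O(x^r)$ as $x\to 0$, the second sum is $O(x^{m+1})$, so the automaton distribution map of $(\tilde\chi,h)$ agrees with the identity to order $m$ at $0$, making $(\tilde\chi,h)$ $m$-concordant. By the uniqueness just established, $\tilde\chi=\chi$; in particular $\chi(\ell_k)=p_k$ and $\chi(0)=\P[A]$, so $\chi$ really is a probability measure.

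Properties \ref{i:crit.seq.fullsupport}--\ref{i:crit.seq.subc} then follow by exhibiting positive-probability trajectories of $(R_n)$. For \ref{i:crit.seq.fullsupport}, the trajectory with $R_{\ell_j}=j+1$ for $j<k$ and $R_{\ell_k}=k$ shows $p_k>0$; it is realizable because $\ell_j\ge j+1$ whenever $\ell_1\ge 2$. For \ref{i:crit.seq.b}, rearranging the optional-stopping identity gives $x-\Psi(x)=\chi(0)\sum_r (q_r/\chi(0))\Bg{\ell_m}{r}$, a convex combination of $\Bg{\ell_m}{r}$ for $r\in\{m+1,\ldots,\ell_m\}$; positivity of $\chi(0)=\P[A]$ is witnessed by the ``maximally increasing'' trajectory $R_n=n$, for which $R_{\ell_k}=\ell_k>k$ for every $k$. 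For \ref{i:crit.seq.subc}, expanding $\Bg{n}{r}=\binom{n}{r}x^r+O(x^{r+1})$ shows that only the $r=m+1$ term contributes to $\Psi^{(m+1)}(0)$, giving $\Psi^{(m+1)}(0)=-q_{m+1}(m+1)!\binom{\ell_m}{m+1}$, and $q_{m+1}>0$ follows from the trajectory $R_n=\min(n,m+1)$, which lies in $A$ and has $R_{\ell_m}=m+1$.

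The hard part is conceptual rather than computational: choosing the right stopping time $T$ so that monotonicity of $(R_n)$ automatically confines the leftover contribution $x-\Psi(x)$ to polynomials $\Bg{\ell_m}{r}$ with $r\ge m+1$. Once that observation is in hand, $m$-concordance of the constructed measure is free, and \thref{thm:derivatives} lets us identify this measure with~$\chi$, after which (b)--(d) reduce to the easy task of producing three explicit trajectories.
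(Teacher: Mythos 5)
Your proof is correct and follows essentially the same route as the paper's: uniqueness (given existence) from the triangular system produced by \thref{thm:derivatives}, existence via the stopping time $T$ for the chain $(R_n)$ and optional stopping of the martingale $X_n=\Bg{n}{R_n}$, identification of the constructed measure with $\chi$ via that uniqueness, and positivity of the relevant probabilities via explicit trajectories. The one genuine departure is in part~\ref{i:crit.seq.subc}: you compute $\Psi^{(m+1)}(0)=-q_{m+1}\,(m+1)!\binom{\ell_m}{m+1}$ directly by noting that only the $r=m+1$ term in the decomposition of $x-\Psi(x)$ contributes to the $(m+1)$th derivative at zero, and then exhibit the trajectory $R_n=\min(n,m+1)$ to see $q_{m+1}>0$; the paper instead deduces $\Psi^{(m+1)}(0)\le 0$ from part~\ref{i:crit.seq.b} via Taylor approximation and rules out equality by appending an artificial $\ell_{m+1}$ and invoking the uniqueness clause a second time. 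Your direct computation is shorter and avoids the second appeal to uniqueness. The witnessing trajectories you use in parts~\ref{i:crit.seq.fullsupport} and~\ref{i:crit.seq.b} (namely $R_{\ell_j}=j+1$ then $R_{\ell_k}=k$, and $R_n=n$) differ from the paper's choices but are equally valid.
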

\begin{proof}
  Fix a sequence $1\leq\ell_1<\cdots<\ell_m$ and let $h(\ell_k)=k$.
  We will first prove that if there exists $m$-concordant $\chi$ supported on 
  $\{0,\ell_1,\ldots,\ell_m\}$, then \ref{i:crit.seq.combint}
  holds. This shows that such a $\chi$ is unique, if it exists, since we can
  apply \ref{i:crit.seq.combint} inductively to determine $\chi(\ell_1),\ldots,\chi(\ell_m)$.
  (Note that it is not obvious a priori that the values of $\chi(\ell_k)$ given by these formulas
  are positive numbers or that their sum is $1$ or smaller,
  which is why we cannot construct $\chi$ by this formula.)
  After this, we will prove existence
  of $\chi$, and last we show \ref{i:crit.seq.fullsupport}--\ref{i:crit.seq.subc}.
  
  To prove \ref{i:crit.seq.combint} under the assumption of existence of $\chi$, 
  we simply apply \thref{thm:derivatives}. In the $k=1$ case we use
  \eqref{eq:Psi'}, yielding $1=\Psi'(0)=\chi(\ell_1)\ell_1$ and proving
  that $\chi(\ell_1)=1/\ell_1$.
  Similarly, for $2\leq k\leq m$, we have $0=\Psi^{(k)}(0)$ and we apply \eqref{eq:derivatives2}
  to deduce the rest of \ref{i:crit.seq.combint}.

  Now, we show that $\chi$ exists.
  Consider the chain $(R_n)_{n\geq 1}$ defined previously, and let
  \begin{align*}
    T = \min\{\ell_k\colon R_{\ell_k}=k\},
  \end{align*}
  with $T=\infty$ if $R_{\ell_k}\neq k$ for $k=1,\ldots,n$. The random variable $T$
  is a stopping time for the filtration $(\Ff_n)_{n\geq 1}$ defined in \thref{lem:mtg}.
  To get a feeling for $T$, consider the perspective of $R_n$ as the location
  of $1$ in the one-line notation of a growing random permutation $\sigma_n$, as 
  described before \thref{lem:uniform}. The idea for $T$ is that we only consider stopping
  at times $\ell_1,\ell_2,\ldots$, and that we stop at the first time $\ell_k$ where
  $1$ is in position~$k$ in $\sigma_{\ell_k}$.
  
  We define $\chi$ be setting $\chi(\ell_k) = \P[T=\ell_k]$ for $k=1,\ldots,m$ and setting
  $\chi(0)=\P[T=\infty]$. Clearly this is a probability measure supported
  within $\{0,\ell_1,\ldots,\ell_m\}$. 
  We now compute
  \begin{align*}
    \Psi(x) &= \sum_{\ell=1}^\infty \chi(\ell)\Bg{\ell}{h(\ell)} = \sum_{k=1}^m \P[T=\ell_k]\Bg{\ell_k}{k}.
  \end{align*}
  As in \thref{ex:no.moments2}, this closely resembles $\E X_T$ for the martingale $(X_n)_{n\geq 1}$
  defined in \thref{lem:mtg},
  since $\Bg{\ell_k}{k}=X_T$ when $T=\ell_k$. Indeed, by the optional stopping theorem,
  \begin{align*}
    x = X_1 = \E X_{T\wedge \ell_m} &= \sum_{k=1}^m\P[T=\ell_k]\E[X_T\mid T=\ell_k] + \P[T=\infty]\E[X_{\ell_m}\mid T=\infty]\\
    &= \Psi(x) +\chi(0)\sum_{r=1}^{\ell_m}\P[R_{\ell_m}=r\mid T=\infty]\Bg{\ell_m}{r}.
  \end{align*}
  We claim that if $T=\infty$, then $R_{\ell_m}\geq m+1$.
  Indeed, if $T>\ell_1$, then $R_{\ell_1}\neq 1$, and hence $R_{\ell_1}\geq 2$.
  Since $(R_n)_{n\geq 1}$ is increasing, we then have $R_{\ell_2}\geq 2$.
  If $T>\ell_2$, then $R_{\ell_2}\neq 2$, and hence $R_{\ell_2}\geq 3$.
  Continuing in this way, if $T>\ell_j$
  then $R_{\ell_j}\geq j+1$. In particular, if $T=\infty$, then $R_{\ell_m}\geq m+1$, proving the claim. 
  Hence $\P[R_{\ell_m}=r\mid T=\infty]=0$ for $r\leq m$, yielding
  \begin{align}
    \Psi(x) = x - \chi(0)\sum_{r=m+1}^{\ell_m}\P[R_{\ell_m}=r\mid T=\infty]\Bg{\ell_m}{r}.\label{eq:Psi.poly}
  \end{align}

  Now we can confirm that the system $(\chi,h)$ we have constructed is $m$-concordant.
  Since the polynomials $\Bg{\ell_m}{r}$ for $r\geq m+1$
  are divisible by $x^{m+1}$, we rewrite \eqref{eq:Psi.poly} as
  \begin{align*}
    \Psi(x) = x - \chi(0)x^{m+1} F(x),
  \end{align*}
  where $F(x)$ is a polynomial. Since
  \begin{align*}
    \frac{d^k}{dx^k}\biggr|_{x=0} \bigl(x^{m+1}F(x)\bigr) = 0
  \end{align*}
  for $k=1,\ldots,m$, we see that $\Psi'(0) = 1$ and
  $\Psi^{(k)}(0) = 0$ for $2\leq k\leq m$. Thus we have shown existence of $\chi$ so that
  $(\chi,h)$ is an $m$-critical
  system with support $\{0,\ell_1,\ldots,\ell_m\}$. We have already shown that
  that there can be at most one probability measure with this property.
  Henceforth we denote the measure $\chi$ we have constructed by $\crit(\ell_1,\ldots,\ell_m)$.
  
  When $\ell_1=1$, we have $T=1$ a.s., which makes $\chi=\delta_1$
  and $\Psi(x)=x$. From now on  
  we assume $\ell_1\geq 2$.
  To prove \ref{i:crit.seq.fullsupport}, we must show $\crit(\ell_1,\ldots,\ell_m)$
  assigns strictly positive mass to each of $0,\,\ell_1\ldots,\,\ell_m$.
  We just need to show that the events $\{T=\ell_k\}$ for $k=1,\ldots,m$
  and the event $\{T=\infty\}$ have positive probability. We claim that
  $T=\ell_k$ occurs if $R_1,\,R_2,\,\ldots,\,R_{\ell_k}$ is the sequence
  \begin{align*}
    1,\,2,\ldots,\,k,\,k,\ldots,\,k.
  \end{align*}
  Indeed, for $j<k$ either $R_{\ell_j}= \ell_j$ or $R_{\ell_j}=k$.
  Since $\ell_1>1$, we have $\ell_j>j$, and hence we stop only when we reach $\ell_k$,
  proving the claim. Similarly,  
  and $T=\infty$ occurs if $R_1,\ldots,R_{\ell_m}$ is
  \begin{align*}
    1,\,2,\ldots,\, m+1,\,m+1,\ldots,\,m+1.
  \end{align*}
  By the dynamics of the chain $(R_n)$, it has positive probability of taking on
  these sequences.

  Property~\ref{i:crit.seq.b} follows directly from \eqref{eq:Psi.poly} together with
  $\chi(0)>0$ from \ref{i:crit.seq.fullsupport}.
  It remains to prove \ref{i:crit.seq.subc} by showing that $\Psi^{(m+1)}(0)<0$.
  From \ref{i:crit.seq.b} we have $\Psi(x)\leq x$.
  If $\Psi^{(m+1)}(0)>0$, then by Taylor approximation we would have $\Psi(x)> x$ for sufficiently
  small $x$, a contradiction. Hence $\Psi^{(m+1)}(0)\leq 0$. To rule out $\Psi^{(m+1)}(0)=0$,
  we make use of uniqueness.
  Choose any $\ell_{m+1}>\ell_m$ and
  extend $h$ by setting $h(\ell_{m+1})=m+1$. 
  If $\Psi^{(m+1)}(0)=0$, then $(\chi,h)$ is
  $(m+1)$-concordant and supported within $\{0,\ell_1,\ldots,\ell_{m+1}\}$.
  But by what we have already proven, the unique measure with these properties
  places positive weight on $\ell_{m+1}$, a contradiction since $\chi(\ell_{m+1})=0$.
  Hence $\Psi^{(m+1)}(0)<0$, completing the proof.
\end{proof}

\begin{remark}
  \thref{lem:crit.seq} has a combinatorial interpretation.
  Let $\sss_n$ denote the set of permutations of $\{1,\ldots,n\}$.
  Fix $1<\ell_1<\cdots<\ell_m$, and for $\pi\in\sss_m$ consider
  the sequence of permutations $\pi_1,\ldots,\pi_m=\pi$
  where $\pi_k\in\sss_{\ell_k}$ is obtained from $\pi$ by deleting values larger than $\ell_k$
  from the one-line notation of $\pi$. For example, if $(\ell_1,\ell_2,\ell_3)=(3,4,6)$ and
  $\pi=621435$, then 
  \begin{align*}
    (\pi_1,\pi_2,\pi_3)=(213,\,2143,\,621435).
  \end{align*}
  Now, let $A_k$ consist of all $\pi\in\sss_m$ such that $\pi_k^{-1}(1)=k$
  but $\pi_j^{-1}(1)\neq j$ for $j<k$. In other words, $A_k$ is made up of
  the permutations $\pi$ in which $1$ is in position $j$ in $\pi_j$ for the first time
  when $j=k$. For example, in the example above, $621435\in A_2$, because $1$ is not in position~$1$
  in $\pi_1$ but is in position~$2$ in $\pi_2$.
  
  Counting the number of permutations in $A_k$ corresponds to computing $\P[T=k]$
  in the proof of \thref{lem:crit.seq}. Thus, for $\chi=\crit(\ell_1,\ldots,\ell_m)$,
  we have $\abs{A_k}=\ell_m!\chi(k)$.
  \thref{lem:crit.seq}\ref{i:crit.seq.combint} then yields:
  \begin{align*}
    \abs{A_1} &= \frac{\ell_m!}{\ell_1},\\
    \abs{A_k} &= 
    \frac{1}{(\ell_k)_k}\sum_{j=1}^{k-1}(-1)^{k+j+1}\binom{k-1}{j-1}\abs{A_j}(\ell_j)_k,
          \qquad \text{for $2\leq k\leq m$.}
  \end{align*}
  In this form, the formula suggests an explanation via the inclusion-exclusion principle, though
  we could not come up with one.
\end{remark}

\thref{lem:crit.seq} gives us an excellent understanding of primitive $m$-critical
systems. We will extend our knowledge to the nonprimitive $m$-critical systems
in \thref{lem:decompose}. First we need a technical lemma.

\begin{lemma}\thlabel{lem:decompose.multiples}
  Let $(\chi,h)$ and $(\bar\chi,h)$ both be $m$-concordant with maximum threshold $m$ or less.
  Assume that $\chi$ has finite support, that $h(\ell)$ is increasing,
  that $h(\ell)\leq\ell$ for $\ell\geq 1$, and that $\tier_{\chi,h}(k)$ is nonempty
  for each $k\in\{1,\ldots,m\}$.
  Suppose that for all $k\in\{1,\ldots,m\}$, the vector
  $(\bar\chi(p))_{p\in\tier(k)}$ is a scalar multiple of $(\chi(p))_{p\in\tier(k)}$.
  Then $\chi=\bar\chi$.
\end{lemma}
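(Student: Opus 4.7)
The plan is to convert the tier-by-tier proportionality hypothesis into a triangular linear system via \thref{thm:derivatives} and then solve it by induction. Let $c_k$ denote the scalar with $\bar\chi(p)=c_k\chi(p)$ on $\tier(k)$, and set $s_i^{(j)}=\sum_{\ell\in\tier(i)}\chi(\ell)(\ell)_j$. Before doing anything else I would record the observation that the maximum-threshold-$\le m$ hypothesis on both systems, together with the scalar-multiple relation and the nonemptiness of each $\tier(k)$, forces $\bar\chi$'s positive-integer support to coincide with $\chi$'s, so $\tier_{\bar\chi,h}(k)=\tier_{\chi,h}(k)$ for every $k\in\{1,\ldots,m\}$.

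Applying \eqref{eq:derivatives2} to the $m$-concordance conditions $\Psi'(0)=1$ and $\Psi^{(j)}(0)=0$ for $2\le j\le m$ then yields, for $(\chi,h)$,
\[
\sum_{i=1}^{j}(-1)^{j+i}\binom{j-1}{i-1}s_i^{(j)} = \begin{cases}1 & \text{if }j=1,\\ 0 & \text{if }2\le j\le m,\end{cases}
\]
and the analogous identities for $(\bar\chi,h)$ are obtained by replacing $s_i^{(j)}$ with $c_i s_i^{(j)}$. Subtracting the two systems produces the homogeneous lower triangular system
\[
\sum_{i=1}^{j}(-1)^{j+i}\binom{j-1}{i-1}(c_i-1)s_i^{(j)}=0,\qquad j=1,\ldots,m,
\]
in the unknowns $c_i-1$. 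The diagonal entries $s_j^{(j)}$ are strictly positive: $\tier(j)$ is nonempty by hypothesis, and the assumption $h(\ell)\le\ell$ forces $\ell\ge j$ on $\tier(j)$, so $(\ell)_j>0$. The $j=1$ equation immediately gives $c_1=1$, and a routine induction on $j$ shows that once $c_1=\cdots=c_{j-1}=1$, the $j$-th equation collapses to $(c_j-1)s_j^{(j)}=0$ and hence $c_j=1$. Consequently $\bar\chi(p)=\chi(p)$ for every positive integer $p$ in a tier, and normalization forces $\bar\chi(0)=\chi(0)$.

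The main obstacle is ensuring that the tiers of $\bar\chi$ genuinely coincide with those of $\chi$, so that the $m$-concordance identities for $\bar\chi$ really involve the same sums $s_i^{(j)}$ rescaled by $c_i$ with no extra contributions from values outside $\chi$'s support. Once that is pinned down, the triangular structure and strict positivity of the diagonal are the only real inputs, and the back-substitution is automatic. The finite-support assumption on $\chi$ is not strictly needed for the algebra but keeps convergence issues out of the way when invoking \thref{thm:derivatives}.
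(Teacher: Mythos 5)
Your proof is essentially the same as the paper's: both derive the $m$-concordance conditions via \eqref{eq:derivatives2}, exploit the resulting lower-triangular structure, and conclude by induction using the fact that $\sum_{\ell\in\tier(k)}\chi(\ell)(\ell)_k>0$ (which follows from nonemptiness of $\tier(k)$ and $h(\ell)\leq\ell$). Packaging the induction as back-substitution in a homogeneous triangular system is a cosmetic difference, not a different argument.

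One caution on the step you flagged as ``the main obstacle.'' The claim that the stated hypotheses force the positive-integer supports of $\chi$ and $\bar\chi$ to coincide is not correct as argued: the scalar-multiple hypothesis only constrains $\bar\chi$ on $\tier_{\chi,h}(k)$, and a priori the scalar could be zero while $\bar\chi$ puts its tier-$k$ mass somewhere outside $\chi$'s support. (For instance with $m=1$, $h\equiv 1$ on small $\ell$, $\chi=\tfrac12\delta_0+\tfrac12\delta_2$ and $\bar\chi=\tfrac23\delta_0+\tfrac13\delta_3$ are both $1$-concordant with maximum threshold $1$, and $(\bar\chi(2))=0\cdot(\chi(2))$.) So this is not a consequence of the hypotheses; it is an additional, implicit assumption that $\bar\chi$ has no positive-integer support outside that of $\chi$. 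The paper's proof makes the same tacit assumption when it writes $\bar\Psi^{(k)}(0)$ as a sum over $\tier_{\chi,h}(j)$, and the assumption is satisfied in the sole application (\thref{lem:decompose}), where $\bar\chi=\widetilde\chi$ is built as a convex combination of measures supported in $\{0\}\cup\bigcup_k\tier_{\chi,h}(k)$. So your instinct to worry about the supports was correct, but the resolution should be to note this as an implicit hypothesis rather than to claim it is derivable.
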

\begin{proof}
  Let $\Psi$ and $\bar\Psi$ be the automaton distribution maps
  of $(\chi,h)$ and $(\bar\chi,h)$, respectively.
  Let $\alpha_k$ be the scalar satisfying $(\bar\chi(p))_{p\in\tier(k)}=\alpha_k(\chi(p))_{p\in\tier(k)}$.
  We will show that $\alpha_k=1$ for all $k\in\{1,\ldots,m\}$ by induction on $k$. The idea is that
  $m$-concordancy together with $\alpha_1=\cdots=\alpha_{k-1}=1$ together with \thref{thm:derivatives}
  implies that $\alpha_k=1$.
  
  For the $k=1$ case, we see from \eqref{eq:Psi'}
  that $\bar\Psi'(0)=\alpha_1\Psi'(0)$. Since $\chi$ and $\bar\chi$ are $m$-concordant,
  we have $\Psi'(0)=\bar\Psi'(0)=1$, showing that $\alpha_1=1$.   
  Next, assume $\alpha_1=\cdots=\alpha_{k-1}=1$, and we show that $\alpha_{k}=1$.
  By $m$-concordancy of $\chi$ and $\bar\chi$, the inductive hypothesis, and \eqref{eq:derivatives2},
  \begin{align*}
    0 &= \Psi^{(k)}(0) =  \sum_{\ell\in\tier(k)}\chi(\ell)(\ell)_k + \sum_{j=1}^{k-1}(-1)^{k+j}\binom{k-1}{j-1}\sum_{\ell\in\tier(j)}\chi(\ell)(\ell)_k,\\\intertext{and}
        0 &= \bar\Psi^{(k)}(0) =  \alpha_k\sum_{\ell\in\tier(k)}\chi(\ell)(\ell)_k + \sum_{j=1}^{k-1}(-1)^{k+j}\binom{k-1}{j-1}\sum_{\ell\in\tier(j)}\chi(\ell)(\ell)_k.
  \end{align*}
  By our assumption that $\tier(k)$ is nonempty and that $h(\ell)\leq \ell$, the term
  $\sum_{\ell\in\tier(k)}\chi(\ell)(\ell)_k$ is nonzero. It follows that $\alpha_k=1$.
\end{proof}

\begin{prop}\thlabel{lem:decompose}
  Let $(\chi,h)$ be $m$-concordant with maximum threshold $m$ or less.
  Assume that $\chi$ has finite support, that $h(\ell)$ is increasing, and
  that $h(\ell)\leq\ell$ for $\ell\geq 1$.
  Then
  
  \begin{enumerate}[(a)]
    \item $\chi$ is a convex combination of measures $\crit(\ell_1,\ldots,\ell_m)$ where
      each $\ell_k$ is in tier~$k$ of $(\chi,h)$;\label{i:decompose}
    \item if $\chi\neq\delta_1$, then $(\chi,h)$ has maximum threshold exactly $m$
      and is hence $m$-critical.\label{i:fulltiers}
  \end{enumerate}
\end{prop}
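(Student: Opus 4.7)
The plan is to prove (a) and (b) together by strong induction on $m$.  The base case $m=1$ is direct: $1$-concordance says $\sum_{\ell\in\tier(1)}\chi(\ell)\ell=1$, so the numbers $\alpha_{\ell_1}:=\chi(\ell_1)\ell_1$ form a probability distribution on $\tier(1)$, and a short verification confirms $\chi=\sum_{\ell_1\in\tier(1)}\alpha_{\ell_1}\crit(\ell_1)$.

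For the inductive step I handle $\chi=\delta_1$ trivially and assume $\chi\neq\delta_1$.  I first establish (b).  Suppose for contradiction that the maximum threshold is some $m'<m$.  Then $(\chi,h)$ has maximum threshold exactly $m'$ and is $m'$-concordant, so the inductive hypothesis decomposes it as $\chi=\sum_{\vec\ell}\beta_{\vec\ell}\crit(\ell_1,\ldots,\ell_{m'})$.  Because $\chi\neq\delta_1$, at least one primitive in the mixture has $\ell_1\geq 2$, and by \thref{lem:crit.seq}\ref{i:crit.seq.subc} such a primitive is $(m'+1)$-subcordant.  Averaging yields $\Psi^{(m'+1)}(0)<0$, which contradicts $m$-concordance (giving $\Psi^{(m'+1)}(0)=0$ since $m'+1\leq m$).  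An analogous contradiction --- obtained by repeating the argument after relabeling $h$ past an allegedly empty intermediate tier, or by feeding the resulting identity directly into the product construction described below --- shows that every tier $\tier(1),\ldots,\tier(m)$ is nonempty.

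For part (a) I construct the decomposition explicitly using product weights.  Define probability distributions $u_k$ on $\tier(k)$ iteratively: set $u_1(\ell)=\chi(\ell)\ell$ (which sums to $1$ by $\Psi'(0)=1$), and having defined $u_1,\ldots,u_{k-1}$, put
\begin{align*}
  D_k(\ell) = \sum_{\ell_1\in\tier(1),\ldots,\ell_{k-1}\in\tier(k-1)} \biggl(\prod_{j=1}^{k-1} u_j(\ell_j)\biggr)\crit(\ell_1,\ldots,\ell_{k-1},\ell)(\ell),
\end{align*}
and $u_k(\ell)=\chi(\ell)/D_k(\ell)$ for $\ell\in\tier(k)$.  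The crucial step is verifying $\sum_{\ell\in\tier(k)}u_k(\ell)=1$, which, after unwinding the recursive formulas of \thref{lem:crit.seq}\ref{i:crit.seq.combint} and invoking the definitions of the previously built $u_j$'s, collapses exactly to the identity $\Psi^{(k)}(0)=0$ supplied by $m$-concordance.  With the $u_k$'s in hand, set
\begin{align*}
  \mu=\sum_{\ell_1\in\tier(1),\ldots,\ell_m\in\tier(m)}\biggl(\prod_{k=1}^m u_k(\ell_k)\biggr)\crit(\ell_1,\ldots,\ell_m).
\end{align*}
Using that $\crit(\ell_1,\ldots,\ell_m)(\ell_{k^*})$ depends only on $\ell_1,\ldots,\ell_{k^*}$, the sums over $\ell_{k^*+1},\ldots,\ell_m$ collapse to $1$ and yield $\mu(\ell)=u_{k^*}(\ell)D_{k^*}(\ell)=\chi(\ell)$ for each $\ell$ in the positive support of $\chi$; since $\mu$ and $\chi$ are probability measures agreeing off of $0$, also $\mu(0)=\chi(0)$.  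As a shortcut for this final step, $\mu$ is automatically $m$-concordant with tier masses equal to those of $\chi$, so \thref{lem:decompose.multiples} gives $\mu=\chi$ directly.

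\textbf{Main obstacle.}  The substantive work is the normalization step for the $u_k$'s: verifying that $\sum_{\ell\in\tier(k)}u_k(\ell)=1$ reduces exactly to $\Psi^{(k)}(0)=0$ requires a careful combinatorial manipulation of the recursion in \thref{lem:crit.seq}\ref{i:crit.seq.combint}, and this is where the hypothesis that $h$ is increasing enters (ensuring $\ell_1<\cdots<\ell_m$ so that the $\crit$ formulas apply).  Ruling out empty intermediate tiers --- needed both for (a) to be nonvacuous and for \thref{lem:decompose.multiples} to apply --- is a secondary but nontrivial subtlety.
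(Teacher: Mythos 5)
Your overall architecture matches the paper's: build a product decomposition $\sum_{\vec\ell}\bigl(\prod_k u_k(\ell_k)\bigr)\crit(\ell_1,\ldots,\ell_m)$ with one weight distribution per tier, use \thref{lem:crit.seq} to identify the primitive pieces, and use $(m'+1)$-subcordancy of the primitives (part \ref{i:crit.seq.subc} of \thref{lem:crit.seq}) to rule out $\chi$ being ``too degenerate.'' Your weights turn out to coincide with the paper's $a_\ell=\chi(\ell)(\ell)_k/\sum_{i\in\tier(k)}\chi(i)(i)_k$ (I checked $k=1,2,3$); you present them dually, setting $u_k(\ell)=\chi(\ell)/D_k(\ell)$ so that $\mu(\ell)=\chi(\ell)$ holds by construction and $\sum_\ell u_k(\ell)=1$ is what needs proving, whereas the paper normalizes $a_\ell$ by fiat so the convex-combination property is trivial and then invokes \thref{lem:decompose.multiples} to get $\widetilde\chi=\chi$. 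Both routes rest on the same computational fact, namely that the identity $\Psi^{(k)}(0)=0$ from \eqref{eq:derivatives2} is exactly the relation tying together the tier sums $\sum_{\ell\in\tier(j)}\chi(\ell)(\ell)_k$. So far so good, though you do leave the general-$k$ normalization as a stated goal rather than a verified step.

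The genuine gap is the treatment of an empty \emph{intermediate} tier. You propose to ``relabel $h$ past an allegedly empty intermediate tier,'' but this does not preserve the analytic data you need: if you decrement $h(\ell)$ for all $\ell$ in tiers above $k$, the automaton distribution map $\Psi(x)=\sum_\ell\chi(\ell)\P[\Bin(\ell,x)\geq h(\ell)]$ changes, and the modified system is not obviously $(m-1)$-concordant --- the formula \eqref{eq:derivatives2} for $\Psi^{(j)}(0)$ mixes contributions from tiers $1,\ldots,j$ with coefficients that depend on the tier index, so after the shift these coefficients no longer line up. The paper's way around this is to \emph{truncate} $\chi$ rather than relabel $h$: if $k$ is the smallest empty tier, let $\bar\chi$ be the $k$-truncation of $\chi$ (keeping $h$ unchanged). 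Then by \thref{rmk:only.tierm} the first $k$ derivatives of $\bar\Psi$ at $0$ equal those of $\Psi$, in particular $\bar\Psi^{(k)}(0)=\Psi^{(k)}(0)=0$; the inductive decomposition applied to $(\bar\chi,h)$ together with $k$-subcordancy of the primitives with $\ell_1\geq 2$ then forces $\bar\chi=\delta_1$, hence $\chi=\delta_1$. You should replace the relabeling idea with this truncation argument; also note that this same truncation argument unifies your case~(b) (a missing top tier) with the intermediate-tier case, so you don't actually need two separate contradictions. Finally, be aware that your ``shortcut'' via \thref{lem:decompose.multiples} at the end doesn't actually bypass the normalization step: that lemma needs $\mu$ to be a probability measure (hence $m$-concordant), which already requires $\sum_\ell u_k(\ell)=1$.
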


\begin{proof}
  For each $k\in\{1,\ldots,m\}$ and $\ell\in\tier(k)$, define
  \begin{align}\label{eq:a.ell}
    a_\ell = \frac{\chi(\ell)(\ell)_k}{\sum_{i\in\tier(k)}\chi(i)(i)_k}.
  \end{align}
  Note that the denominator in this expression is nonzero: the sum includes 
  $\chi(\ell)(\ell)_k$, and $\chi(\ell)>0$ for $\ell\in\tier(k)$ by definition of $\tier(k)$,
  and $\ell\geq k$ by our assumption that $h(\ell)\leq\ell$.
  Now, we define
  \begin{align}
    \widetilde\chi=\sum_{\ell_1\in\tier(1)}\cdots\sum_{\ell_m\in\tier(m)}a_{\ell_1}\cdots a_{\ell_m}\crit(\ell_1,\ldots,\ell_m).\label{eq:decompose}
  \end{align}
  If tiers $1,\ldots,m$ are nonempty, then this sum
  is a convex combination, since $\sum_{\ell_k\in\tier(k)} a_{\ell_k}=1$ and hence
  \begin{align*}
    \sum_{\ell_1\in\tier(1)}\cdots\sum_{\ell_m\in\tier(m)}a_{\ell_1}\cdots a_{\ell_m}
      &=\Biggl( \sum_{\ell_1\in\tier(1)} a_{\ell_1}\Biggr)\cdots\Biggl( \sum_{\ell_m\in\tier(m)} a_{\ell_m}\Biggr)=1.
  \end{align*}
  As a convex combination of $m$-concordant measures, $\widetilde\chi$ itself is $m$-concordant.
  Indeed,
  its automaton distribution map $\widetilde\Psi$ satisfies
  $\widetilde\Psi'(0)=1$ and $\widetilde\Psi^{(k)}(0)=0$ for $2\leq k\leq m$ since it is
  then a convex combination of functions satisfying the same derivative condition.
  In fact, we will eventually show that $\chi=\widetilde\chi$ and that the assumption of
  nonempty tiers is automatically satisfied under the assumptions of this proposition.
  See \thref{ex:decomposition} to see this decomposition in practice.

  We first argue that $\chi=\widetilde\chi$ under the assumption that tiers 
  $1,\ldots, m$ of $(\chi,h)$ are nonempty.
  With the aim of applying \thref{lem:decompose.multiples}, we will show that
    the vector $(\widetilde\chi(p))_{p\in\tier(k)}$ is a scalar multiple of $(\chi(p))_{p\in\tier(k)}$ for each 
    $k=1,\ldots,m$.
  To prove this, we define
  \begin{align*}
    b_1 &= \frac{1}{\sum_{i\in\tier(1)}\chi(i)i},\\\intertext{and}
    b_k(x_1,\ldots,x_{k-1})
      &=\frac{1}{\sum_{i\in\tier(k)}\chi(i)(i)_k}\sum_{j=1}^{k-1}(-1)^{k+j+1}\binom{k-1}{j-1}\chi(x_j)(x_j)_k,
  \end{align*}
  for $k\geq 2$.
  The denominators on the right-hand side of these equations are nonzero by our assumption 
  of nonempty tiers and that $h(\ell)\leq \ell$.
  Now, fix an arbitrary $k\in\{1,\ldots,m\}$.
  For any $\ell_k\in\tier(k)$, we have 
  \begin{align*}
     a_{\ell_k}\crit(\ell_1,\ldots,\ell_m)\{\ell_k\} &= \chi(\ell_k)b_k(\ell_1,\ldots,\ell_{k-1})
  \end{align*}
  by \thref{lem:crit.seq}\ref{i:crit.seq.combint},
  using the notation $\crit(\ell_1,\ldots,\ell_m)\{i\}$ to denote
  the mass placed on the value $i$ by the measure $\crit(\ell_1,\ldots,\ell_m)$.
  Now, let $p\in\tier(k)$. Since $\crit(\ell_1,\ldots,\ell_m)$ is supported on $\{\ell_1,\ldots,\ell_m\}$,
  the following holds for any $\ell_1,\ldots,\ell_{k-1},\ell_{k+1},\ldots,\ell_m$ with $\ell_i\in\tier(i)$:
  \begin{align*}
    \sum_{\ell_k\in\tier(k)} a_{\ell_1}\cdots &a_{\ell_m}\crit(\ell_1,\ldots,\ell_m)\{p\}\\
     &=
      a_{\ell_1}\cdots a_{\ell_{k-1}}a_p a_{\ell_{k+1}}\cdots a_{\ell_m}\crit(\ell_1,\ldots,\ell_{k-1},p,\ell_{k+1},\ldots,\ell_m)\{p\}\\
    &= a_{\ell_{k-1}}a_{\ell_{k+1}}\cdots a_{\ell_m} \chi(p)b_k(\ell_1,\ldots,\ell_{k-1}).
  \end{align*}
  Now we use this to compute
  \begin{align*}
    \widetilde\chi(p) &= \sum_{\ell_1\in\tier(1)}\cdots\sum_{\ell_m\in\tier(m)}a_{\ell_1}\cdots a_{\ell_m}\crit(\ell_1,\ldots,\ell_m)\{p\}\\
      &= \chi(p)\overbrace{\sum_{\ell_1\in\tier(1)}\cdots\sum_{\ell_m\in\tier(m)}}^{\text{$k$th sum omitted}}
      a_{\ell_1}\cdots a_{\ell_{k-1}}a_{\ell_{k+1}}\cdots a_{\ell_m}b_k(\ell_1,\ldots,\ell_{k-1}).
  \end{align*}
  Thus, for each $p\in\tier(k)$, we have shown that $\widetilde\chi(p)$ is equal to $\chi(p)$ scaled by
  a factor not depending on $p$.
  This completes the proof that
  $(\widetilde\chi(p))_{p\in\tier(k)}$ is a scalar multiple of $(\chi(p))_{p\in\tier(k)}$ for each 
    $k=1,\ldots,m$.
  
  As we noted earlier, $\widetilde\chi$ is $m$-concordant. \thref{lem:decompose.multiples}
  applies and shows that $\chi=\widetilde\chi$. Thus part~\ref{i:decompose} of
  the proposition is proven under the
  extra assumption of nonempty tiers.
  
  Finally, we show that this assumption
  holds whenever $\chi\neq\delta_1$. This proves \ref{i:fulltiers}, and it proves \ref{i:decompose}
  when $\chi\neq\delta_1$. This will complete the proof, since \ref{i:decompose} is trivial when 
  $\chi=\delta_1$ since $\chi=\crit(1)$.
  Thus, we suppose that
  $(\chi,h)$ satisfies all the conditions of the proposition and has an empty tier.
  Let $k$ be the smallest value so that
  tier~$k$ is empty. From \eqref{eq:Psi'} and $\Psi'(0)=1$, we have $k\geq 2$.
  Let $\bar\chi$ be the $k$-truncation of $\chi$, which is equal to the $(k-1)$-truncation
  since tier~$k$ is empty.
  Now $(\bar\chi,h)$ is $(k-1)$-concordant with maximum threshold $k-1$, 
  and $h(\ell)$ is still increasing, and $\bar\chi$ has finite
  support. Also tiers~$1,\ldots,\,k-1$ are all nonempty in $\bar\chi$. 
  Thus all conditions of this proposition are satisfied with $m$ as $k-1$
  as well as the nonempty tiers assumption,
  and therefore $\bar\chi$ decomposes into a convex combination of measures
  $\crit(\ell_1,\ldots,\ell_{k-1})$ with $\ell_i\in\tier(i)$. 
  
  By definition of the $k$-truncation, the measures $\chi$ and $\bar\chi$
  place the same weight on all values in tiers~$1,\ldots,k$. By 
  \thref{rmk:only.tierm},
  this implies that $\bar\Psi^{(k)}(0)=\Psi^{(k)}(0)$. And since $\chi$ is
  $m$-concordant, we have $\Psi^{(k)}(0)=0$ and can conclude that $\bar\Psi^{(k)}(0)=0$
  as well.
  
  Now, consider the decomposition of $\bar\chi$  into a convex combination of measures
  of the form $\crit(\ell_1,\ldots,\ell_{k-1})$. Let $\Psi_{\ell_1,\ldots,\ell_{k-1}}$ denote
  the automaton distribution maps of these measures, and note that
  $\bar\Psi$ is a convex combination of these maps.
  By \thref{lem:crit.seq}\ref{i:crit.seq.subc}, each measure $\crit(\ell_1,\ldots,\ell_{k-1})$
  is either $k$-subcordant or is equal to $\delta_1$. In the first case, 
  $\Psi_{\ell_1,\ldots,\ell_{k-1}}^{(k)}(0)<0$, and in the second
  $\Psi_{\ell_1,\ldots,\ell_{k-1}}^{(k)}(0)=0$. Since $\bar\Psi^{(k)}(0)=0$,
  we have $\crit(\ell_1,\ldots,\ell_{k-1})=\delta_1$ for all measures in the decomposition
  of $\bar\chi$, and therefore $\bar\chi=\delta_1$. But if the $k$-truncation of $\chi$
  is $\delta_1$, then $\chi$ itself is equal to $\delta_1$. Hence $\chi=\delta_1$ if
  any of tiers $1,\ldots,m$ are empty.
\end{proof}

Here is an example of the decomposition of a critical system
into primitive ones. We find it helpful for understanding both \thref{lem:crit.seq,lem:decompose}.

\begin{example}\thlabel{ex:decomposition}
  Let
  \begin{align*}
  \chi(\ell)=\begin{cases}
    64/135 & \text{for $\ell=0$,}\\
    1/3 & \text{for $\ell=2$,}\\
    1/9 & \text{for $\ell=3$},\\
    1/27 & \text{for $\ell=4$,}\\
    2/45 & \text{for $\ell=5$,}\\
  \end{cases}\qquad\qquad \text{and} \qquad\qquad
  h(\ell)=\begin{cases}
      1 & \text{for $\ell=0$,}\\
      1 & \text{for $\ell=2$,}\\
      1 & \text{for $\ell=3$,}\\
      2 & \text{for $\ell=4$},\\
      2 & \text{for $\ell=5$.}
  \end{cases}
\end{align*}
We claim that $(\chi,h)$ is $2$-critical. Indeed, it has maximum threshold $2$, and by \eqref{eq:Psi'}
and \eqref{eq:Psi''}, the automaton distribution map $\Psi$ satisfies
\begin{align*}
  \Psi'(0) &= \chi(2)(2) + \chi(3)(3) = 1,\\
  \Psi''(0) &= \chi(4)(4)(3) + \chi(5)(5)(4) - \chi(2)(2)(1) - \chi(3)(3)(2) = 0.
\end{align*}
By \thref{lem:decompose}, we can express $\chi$ as a mixture of the measures $\crit(2,4)$,
$\crit(2,5)$, $\crit(3,4)$, and $\crit(3,5)$.
First, we compute these measures, which can be done most easily using \thref{lem:crit.seq}\ref{i:crit.seq.combint}:
\begin{align*}
  \crit(2,4)\{\ell\} &= \begin{cases}
     5/12 & \text{for $\ell=0$,}\\
     1/2 & \text{for $\ell=2$,}\\
     1/12 & \text{for $\ell=4$,}
  \end{cases}
  &
  \crit(3,4)\{\ell\} &= \begin{cases}
     1/2 & \text{for $\ell=0$,}\\
     1/3 & \text{for $\ell=3$,}\\
     1/6 & \text{for $\ell=4$,}
  \end{cases}\\[4pt]
  \crit(2,5)\{\ell\} &= \begin{cases}
     9/20 & \text{for $\ell=0$,}\\
     1/2 & \text{for $\ell=2$,}\\
     1/20 & \text{for $\ell=5$,}
  \end{cases}
  &
  \crit(3,5)\{\ell\} &= \begin{cases}
     17/30 & \text{for $\ell=0$,}\\
     1/3 & \text{for $\ell=3$,}\\
     1/10 & \text{for $\ell=5$.}
  \end{cases}
\end{align*}
We can find a decomposition of $\chi$
using the technique from the
proof of \thref{lem:decompose}. We apply \eqref{eq:a.ell} to compute
\begin{align*}
  a_2 &= \frac{\chi(2)(2)}{\chi(2)(2) + \chi(3)(3)} = \frac23,& a_4 &= \frac{\chi(4)(4)(3)}{\chi(4)(4)(3) + \chi(5)(5)(4)}=\frac13,\\
  a_3 &= \frac{\chi(3)(3)}{\chi(2)(2) + \chi(3)(3)} = \frac13, & a_5 &=\frac{\chi(5)(5)(4)}{\chi(4)(4)(3) + \chi(5)(5)(4)}=\frac23.
\end{align*}
Now \eqref{eq:decompose} gives
\begin{align*}
  \chi &= a_2a_4\crit(2,4) + a_2a_5\crit(2,5) + a_3a_4\crit(3,4) + a_3a_5\crit(3,5)\\
    &= (2/9)\crit(2,4) + (4/9)\crit(2,5) + (1/9)\crit(3,4) + (2/9)\crit(3,5).
\end{align*}
\end{example}

It follows from \thref{lem:crit.seq,lem:decompose} that $m$-critical systems
have no nonzero fixed points. We show this together with some other simple
consequences of these lemmas:
\begin{prop}\thlabel{prop:critical.Phi}
  Suppose that $(\chi,h)$ is $m$-critical and that $\chi\neq\delta_1$. 
  Assume that $\chi$ has finite support,
  $h(\ell)$ is increasing, and that
  $h(\ell)\leq\ell$ for $\ell\geq 1$. Then the system's automaton distribution map $\Psi$
  satisfies $\Psi(x)<x$ for $x\in(0,1]$, and
  $\bigl(x-\Psi(x)\bigr)/\chi(0)$ is a convex combination of polynomials
  $\Bg{\ell}{r}$ for $\ell\in\tier(m)$ and $r\geq m+1$.
\end{prop}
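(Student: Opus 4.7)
The plan is to reduce to the primitive case via the convex decomposition supplied by \thref{lem:decompose} and then combine the representations of each primitive piece supplied by \thref{lem:crit.seq}\ref{i:crit.seq.b}, exploiting that the automaton distribution map is linear in $\chi$ (for fixed $h$).

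First I would apply \thref{lem:decompose}\ref{i:decompose} to write $\chi$ as a convex combination of primitive $m$-critical measures $\crit(\ell_1,\ldots,\ell_m)$ with $\ell_k\in\tier_{\chi,h}(k)$. Linearity then expresses $x-\Psi(x)$ as the same convex combination of the differences $x-\Psi_{\ell_1,\ldots,\ell_m}(x)$, where $\Psi_{\ell_1,\ldots,\ell_m}$ is the automaton distribution map of the primitive piece. Pieces with $\ell_1=1$ contribute zero, because by \thref{lem:crit.seq} we have $\crit(\ell_1,\ldots,\ell_m)=\delta_1$ and $\Psi_{\ell_1,\ldots,\ell_m}(x)=x$. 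For pieces with $\ell_1\geq 2$, the strict chain $\ell_1<\cdots<\ell_m$ forces $\ell_m\geq m+1$, and \thref{lem:crit.seq}\ref{i:crit.seq.b} writes $x-\Psi_{\ell_1,\ldots,\ell_m}(x)$ as $\crit(\ell_1,\ldots,\ell_m)\{0\}$ times a convex combination of polynomials $\Bg{\ell_m}{r}$ for $r\in\{m+1,\ldots,\ell_m\}$, with this leading factor positive by \thref{lem:crit.seq}\ref{i:crit.seq.fullsupport}. Summing, $x-\Psi(x)$ becomes a nonnegative linear combination of polynomials $\Bg{\ell}{r}$ with $\ell\in\tier(m)$ and $r\geq m+1$.

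To upgrade this to a convex combination after dividing by $\chi(0)$, I would evaluate at $x=1$: the assumption $h(\ell)\leq\ell$ gives $\Psi(1)=\sum_{\ell\geq 1}\chi(\ell)=1-\chi(0)$, while $\Bg{\ell}{r}(1)=1$ whenever $r\leq\ell$, so the coefficients sum to $\chi(0)$. For the strict inequality $\Psi(x)<x$ on $(0,1]$, each polynomial $\Bg{\ell}{r}(x)=\P[\Bin(\ell,x)\geq r]$ is strictly positive on $(0,1]$ whenever $r\leq\ell$, so it suffices that some coefficient in the combination is positive.

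The main obstacle I anticipate is verifying this last ingredient, which simultaneously ensures $\chi(0)>0$ so that the normalization in the statement is meaningful. My argument is that if every primitive piece appearing in the decomposition had $\ell_1=1$, each such piece would equal $\delta_1$ and the whole convex combination would collapse to $\delta_1$; this can only occur when $\tier_{\chi,h}(1)=\{1\}$, which via \eqref{eq:Psi'} and $\Psi'(0)=1$ forces $\chi(1)=1$, i.e., $\chi=\delta_1$. Since we assume $\chi\neq\delta_1$, at least one primitive piece with $\ell_1\geq 2$ appears with positive weight, yielding both $\Psi(x)<x$ on $(0,1]$ and $\chi(0)>0$, and completing the proof.
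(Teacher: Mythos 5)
Your proof is correct and follows essentially the same route as the paper: decompose $\chi$ via \thref{lem:decompose} into a convex combination of primitive $m$-critical measures, apply \thref{lem:crit.seq}\ref{i:crit.seq.b} to each piece with $\ell_1\geq 2$, handle the $\ell_1=1$ pieces trivially, and conclude positivity from $\chi\neq\delta_1$. The only cosmetic difference is that you verify the coefficients sum to $\chi(0)$ by evaluating at $x=1$, whereas the paper computes the sum directly as $\sum a_{\ell_1,\ldots,\ell_m}\crit(\ell_1,\ldots,\ell_m)\{0\}=\chi(0)$; both are fine.
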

\begin{proof}
  The statement about $\bigl(x-\Psi(x)\bigr)/\chi(0)$ is a direct consequence
  of \thref{lem:crit.seq,lem:decompose}:
  We use \thref{lem:decompose} to decompose $\chi$ as
  \begin{align*}
    \chi=\sum_{\ell_1,\ldots,\ell_m} a_{\ell_1,\ldots,\ell_m}\crit(\ell_1,\ldots,\ell_m),
  \end{align*}
  where the sum ranges over all $\ell_k\in\tier(k)$ for $k=1,\ldots,m$ and the coefficients
  $a_{\ell_1,\ldots,\ell_m}$ are nonnegative and sum to 1.
  Let $\Psi_{\ell_1,\ldots,\ell_m}$ denote the automaton distribution map for
  $\crit(\ell_1,\ldots,\ell_m)$, and observe that the automaton distribution map $\Psi$ of $(\chi,h)$
  also decomposes as
  \begin{align*}
    \Psi(x) = \sum_{\ell_1,\ldots,\ell_m} a_{\ell_1,\ldots,\ell_m}\Psi_{\ell_1,\ldots,\ell_m}(x).
  \end{align*}
  When $\ell_1\geq 2$, the expression
  $x-\Psi_{\ell_1,\ldots,\ell_m}(x)$
  is a linear combination of the polynomials
  \begin{align*}
    \Bg{\ell_m}{r},\quad r\in\{m+1,\ldots,\ell_m\}
  \end{align*}
  with nonnegative coefficients summing to $\crit(\ell_1,\ldots,\ell_m)\{0\}$
  by \thref{lem:crit.seq}\ref{i:crit.seq.b}. In fact, this holds when $\ell_1=1$
  as well, since then $x-\Psi_{\ell_1,\ldots,\ell_m}(x)=0$ and $\crit(\ell_1,\ldots,\ell_m)\{0\}=0$.
  Hence,
  \begin{align*}
    x - \Psi(x) = x - \sum_{\ell_1,\ldots,\ell_m} a_{\ell_1,\ldots,\ell_m}\Psi_{\ell_1,\ldots,\ell_m}(x)
      = \sum_{\ell_1,\ldots,\ell_m} a_{\ell_1,\ldots,\ell_m}\bigl(x-\Psi_{\ell_1,\ldots,\ell_m}(x)\bigr)
  \end{align*}
  is a linear combination of polynomials $\Bg{\ell_m}{r}$ for $\ell_m\in\tier(m)$
  and $m+1\leq r\leq \ell_m$, with nonnegative coefficients summing to
  \begin{align}\label{eq:chi0}
    \sum_{\ell_1,\ldots,\ell_m} a_{\ell_1,\ldots,\ell_m}\crit(\ell_1,\ldots,\ell_m)\{0\}=\chi(0).
  \end{align}
  
  Finally, we show that $\Psi(x)<x$ for $x\in(0,1]$. Since $x-\Psi(x)$ is a linear combination
  with nonnegative coefficients
  of polynomials $\Bg{\ell_m}{r}$ that are nonnegative on $[0,1]$, we have $\Psi(x)\leq x$ for $x\in[0,1]$.
  Since these polynomials are strictly positive for $x\in(0,1]$,
  the statement holds so long any of them are included in the decomposition of $x-\Psi(x)$.
  Since each summand on the left-hand side of \eqref{eq:chi0} is nonzero whenever $\ell_1\geq 2$
  by \thref{lem:crit.seq}\ref{i:crit.seq.fullsupport},
  this is true unless $a_{\ell_1,\ldots,\ell_m}=0$ for all $\ell_1\geq 2$. But since
  $\crit(\ell_1,\ldots,\ell_m)=\delta_1$ if $\ell_1=1$, this would imply that $\chi=\delta_1$.
\end{proof}

The following is a trivial but useful observation:
\begin{lemma}\thlabel{lem:fp.criterion}
  For any $x\in(0,1]$ and positive integer $r$ satisfying $h(r)\leq r$,
  the system $(\chi,h)$ has $x$ as a fixed
  point if and only if
  \begin{align}\label{eq:fp.criterion}
    \chi(r) = \frac{x - \sum_{\ell\neq r}\chi(\ell)\Bg{\ell}{h(\ell)}}{\Bg{r}{h(r)}}.
  \end{align}
\end{lemma}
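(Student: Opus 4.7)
The plan is to derive the stated equation directly from the defining fixed-point equation $\Psi(x) = x$, which under our convention $h(\ell)\geq 1$ takes the form
\begin{align*}
  x = \Psi(x) = \sum_{\ell=1}^{\infty} \chi(\ell)\Bg{\ell}{h(\ell)},
\end{align*}
since the $\ell=0$ term contributes $\P[\Bin(0,x)\geq h(0)]=0$. I would isolate the $\ell=r$ summand on the left-hand side, giving
\begin{align*}
  \chi(r)\Bg{r}{h(r)} = x - \sum_{\ell\neq r}\chi(\ell)\Bg{\ell}{h(\ell)},
\end{align*}
and then divide by $\Bg{r}{h(r)}$ to obtain \eqref{eq:fp.criterion}.

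The only substantive step is to verify that the denominator $\Bg{r}{h(r)}$ is strictly positive under the hypotheses, so that the division is legitimate and the equivalence runs in both directions. Since $\Bg{r}{h(r)}=\P[\Bin(r,x)\geq h(r)]$ and $h(r)\leq r$, we have the crude bound
\begin{align*}
  \Bg{r}{h(r)} \geq \P[\Bin(r,x) = r] = x^r,
\end{align*}
which is strictly positive for $x\in(0,1]$. Hence the manipulation above is reversible: whenever $\chi(r)$ satisfies \eqref{eq:fp.criterion}, multiplying through yields $\chi(r)\Bg{r}{h(r)}+\sum_{\ell\neq r}\chi(\ell)\Bg{\ell}{h(\ell)}=x$, which is precisely $\Psi(x)=x$.

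There is no real obstacle here; the lemma amounts to rewriting $\Psi(x)=x$ as a formula solving for the single coefficient $\chi(r)$, and the only thing to check is the positivity of the divisor. The usefulness presumably comes from applications in which one wants to tune $\chi(r)$ so as to plant a prescribed fixed point $x$ while holding the remaining masses fixed.
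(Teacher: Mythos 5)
Your proof is correct and takes essentially the same approach as the paper: both reduce the lemma to algebraic rearrangement of $\Psi(x)=x$ and verify that the divisor $\Bg{r}{h(r)}$ is strictly positive because $x>0$ and $h(r)\leq r$. Your explicit lower bound $\Bg{r}{h(r)}\geq x^r$ is a slightly more concrete justification of positivity than the paper's one-line remark, but the argument is otherwise identical.
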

\begin{proof}
  Since $\Bg{r}{h(r)}=\P[\Bin(r,x)\geq h(r)]$ is positive by our assumption that
  $x>0$ and $h(r)\leq r$, equation~\eqref{eq:fp.criterion} is equivalent to 
  \begin{align*}
    \chi(r)\P[\Bin(r,x)\geq h(r)] + \sum_{\ell\neq r}\chi(\ell)\P[\Bin(\ell,x)\geq h(\ell)] = x.
  \end{align*}
  That is, equation~\eqref{eq:fp.criterion} is equivalent to the statement $\Psi(x)=x$.
\end{proof}

Suppose that $(\chi,h)$ has maximum support $n$ and is $m$-critical,
and that $h(\ell)$ is increasing and satisfies $h(\ell)\leq\ell$ for $\ell\geq 1$.
By \thref{prop:critical.Phi}, this system has no fixed points other than $0$.
Now suppose that $r\geq n+1$ and $h(r)=m+1$, and that we wish to modify $\chi$
by shifting mass from $0$
onto $r$ to create a given fixed point. (There is no obvious reason we would want to do this,
but it turns out to be a key step in the proof of \thref{prop:truncations}.)
The previous lemma suggests that we can do so by setting $\chi(r)$
to make \eqref{eq:fp.criterion} hold.
But this might not be possible, since \eqref{eq:fp.criterion} may demand that $\chi(r)$ be
too small (i.e., negative) or too large (i.e., greater than $\chi(0)$).
The following result combines with \thref{lem:fp.criterion}
to show that neither of these occurs.

\begin{lemma}\thlabel{prop:varphi}
  Suppose that $(\chi,h)$ is $m$-critical and $\chi\neq\delta_1$.
  Assume that $\chi$ is
  supported on $\{0,\ldots,n\}$, that $h(\ell)$ is increasing,
  and that $h(\ell)\leq \ell$ for all $\ell\geq 1$. 
  Fix some integer $r\geq n+1$ and suppose that $h(r)=m+1$.
  Define $\varphi(x)$ for $x\in(0,1]$ by
  \begin{align}\label{eq:varphi}
    \varphi(x) = \frac{x - \sum_{\ell=1}^{n}\chi(\ell)\Bg{\ell}{h(\ell)}}{\Bg{r}{m+1}}.
  \end{align}
  Then $\varphi(x)\in(0,\chi(0)]$, and 
  $\varphi(x)$ is strictly increasing.
\end{lemma}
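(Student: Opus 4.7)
The plan is to use \thref{prop:critical.Phi} to express $\varphi(x)$ as a nonnegative combination of ratios of binomial tail probabilities, from which the bounds $\varphi(x)\in(0,\chi(0)]$ fall out, and then reduce the strict monotonicity of $\varphi$ to the monotone likelihood ratio property of Beta distributions.

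Since $(\chi,h)$ is $m$-critical with $\chi\neq\delta_1$, \thref{prop:critical.Phi} gives a representation $x-\Psi(x)=\chi(0)\sum_{(\ell,s)}c_{\ell,s}\Bg{\ell}{s}$, where $\Psi$ is the automaton distribution map of $(\chi,h)$, the coefficients $c_{\ell,s}$ are nonnegative and sum to $1$, and the indices range over $\ell\in\tier_{\chi,h}(m)\subseteq\{1,\ldots,n\}$ and $m+1\leq s\leq\ell$. Because $h(r)=m+1\leq r$, the denominator $\Bg{r}{m+1}$ is strictly positive on $(0,1]$, and dividing yields
\begin{align*}
  \varphi(x)=\chi(0)\sum_{(\ell,s)}c_{\ell,s}\frac{\Bg{\ell}{s}}{\Bg{r}{m+1}}.
\end{align*}
Strict positivity $\varphi(x)>0$ is immediate because each ratio is positive. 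The upper bound $\varphi(x)\leq\chi(0)$ follows from $\Bg{\ell}{s}\leq\Bg{r}{m+1}$ for $\ell\leq r$ and $s\geq m+1$, which is just stochastic monotonicity of the binomial.

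The hard part is strict monotonicity, which I would handle one ratio at a time, showing that each $R_{\ell,s}(x)=\Bg{\ell}{s}/\Bg{r}{m+1}$ is strictly increasing on $(0,1]$. For this I will use the fact that $\Bg{n}{k}$ is the CDF at $x$ of a $\mathrm{Beta}(k,n-k+1)$ random variable (the $k$th order statistic of $n$ i.i.d.\ uniforms on $[0,1]$); thus $R_{\ell,s}=F_V/F_U$ with $V\sim\mathrm{Beta}(s,\ell-s+1)$ and $U\sim\mathrm{Beta}(m+1,r-m)$, and the density ratio comes out to
\begin{align*}
  \frac{f_V(x)}{f_U(x)}=C\cdot x^{s-1-m}(1-x)^{(\ell-r)+(m+1-s)}
\end{align*}
for some positive constant $C$. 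The $x$-exponent is nonnegative since $s\geq m+1$, and the $(1-x)$-exponent is at most $-1$ because $\ell\leq n<r$ forces $\ell-r\leq -1$ while $s\geq m+1$ forces $m+1-s\leq 0$. Hence $f_V/f_U$ is strictly increasing on $(0,1)$. The standard fact that a strictly increasing likelihood ratio produces a strictly increasing CDF ratio then gives that each $R_{\ell,s}$ is strictly increasing on $(0,1)$, and continuity at $x=1$ extends this to all of $(0,1]$.
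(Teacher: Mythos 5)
Your proof is correct, and it diverges from the paper's at precisely the place where the real work happens: strict monotonicity of $\varphi$. The paper proves a separate elementary lemma (\thref{lem:binexpr}) showing that $\Bg{p}{j}/\Bg{r}{k}$ is strictly increasing under the relevant index constraints; the argument there handles two base cases ($j=k$, $r=p+1$ and $j=k+1$, $p=r$) by direct algebraic manipulation of the binomial sums and then chains them together. You instead recognize $\Bg{n}{k}$ as the CDF of a $\mathrm{Beta}(k,n-k+1)$ law and reduce to the monotone likelihood ratio order: the Beta density ratio works out to $C\,x^{s-1-m}(1-x)^{(\ell-r)+(m+1-s)}$, whose $x$-exponent is $\geq 0$ and whose $(1-x)$-exponent is $\leq -1$, so the density ratio (hence the CDF ratio) is strictly increasing. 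This replaces the paper's hands-on combinatorics with a standard piece of stochastic-order machinery; it is shorter given that machinery, while the paper's version is self-contained. (To be completely airtight you might spell out the ``standard fact'' that an increasing density ratio implies an increasing CDF ratio, which follows from writing $F_V(x)/F_U(x)$ as an $f_U$-weighted average of $f_V(t)/f_U(t)$ over $t\in(0,x)$.) You also handle the bound $\varphi(x)\leq\chi(0)$ differently: the paper evaluates $\varphi(1)=\chi(0)$ and invokes monotonicity, while you bound each ratio $\Bg{\ell}{s}/\Bg{r}{m+1}\leq 1$ by stochastic dominance and use that the coefficients form a convex combination; both are valid and roughly the same length.
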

\begin{proof}
  Let $\Psi$ be the automaton distribution map of $(\chi,h)$, and note that
  the numerator on the right-hand side of \eqref{eq:varphi} is equal
  to $x-\Psi(x)$.
  By \thref{prop:critical.Phi}, this quantity is strictly positive, proving
  that $\varphi(x)>0$. \thref{prop:critical.Phi} also lets us
  express $x-\Psi(x)$ as
  a linear combination of polynomials $\Bg{\ell}{j}$ for $\ell<r$
  and $j\geq m+1$, with nonnegative coefficients.
  \thref{lem:binexpr} to follow shows that $\Bg{\ell}{j}/\Bg{r}{m+1}$
  is strictly increasing for all $\ell<r$ and $j\geq m+1$, proving that  
  $\varphi$ is strictly increasing.
  Finally, direct evaluation shows that $\varphi(1)=1-\sum_{\ell=1}^n\chi(\ell)=\chi(0)$,
  and since $\varphi$ is increasing we have $\varphi(x)\leq\chi(0)$ for $x\in(0,1]$.
\end{proof}

\newcommand{\nnn}{n}
\begin{lemma}\thlabel{lem:binexpr}
  Let $1\leq j\leq p$ and $1\leq k\leq r$.  
  If $p< r$ and $j\geq k$, or if $p\leq r$ and $j > k$, then
  \begin{align}\label{eq:binexpr}
    \frac{\Bg{p}{j}}{\Bg{r}{k}}
  \end{align}
  is strictly increasing in $x$ for $x\in(0,1]$.
\end{lemma}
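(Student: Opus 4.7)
The plan is to use the integral representation $\Bg{n}{k}(x) = \int_0^x n\Be[t]{n-1}{k-1}\,dt$, which follows from the $m=1$ case of \thref{prop:B.deriv}. Writing $f(t) = p\Be[t]{p-1}{j-1}$ and $g(t) = r\Be[t]{r-1}{k-1}$, the ratio in \eqref{eq:binexpr} equals $F(x)/G(x)$, where $F(x) = \int_0^x f$ and $G(x) = \int_0^x g$.

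The first step is to establish that the pointwise density ratio $f/g$ is strictly increasing on $(0,1)$. Up to a positive multiplicative constant,
\begin{align*}
  \frac{f(t)}{g(t)} \propto t^{a}(1-t)^{b}, \qquad a = j-k,\quad b = (p-j)-(r-k).
\end{align*}
A direct check shows that in both cases of the hypothesis we have $a \geq 0$ and $b \leq -1$: in case~(a), $j-k \geq 0$ and $(p-r)+(k-j) \leq -1 + 0 = -1$; in case~(b), $j-k \geq 1$ and $(p-r)+(k-j) \leq 0 - 1 = -1$. The logarithmic derivative of $t^a(1-t)^b$ in $t$ is $a/t - b/(1-t)$, which is strictly positive on $(0,1)$ because $a/t \geq 0$ and $-b/(1-t) \geq 1/(1-t) > 0$. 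Hence $f/g$ is strictly increasing on $(0,1)$.

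The second step is a standard monotone-likelihood-ratio style implication: if $f,g$ are nonnegative continuous functions, both $F$ and $G$ are strictly positive on $(0,1]$, and $f/g$ is strictly increasing on $(0,1)$, then $F/G$ is strictly increasing on $(0,1]$. I would prove this by writing, for $0 < x_1 < x_2 \leq 1$,
\begin{align*}
  F(x_2) G(x_1) - F(x_1) G(x_2) = \iint_{\substack{x_1 < s \leq x_2\\ 0 \leq t \leq x_1}} \bigl(f(s)g(t) - f(t)g(s)\bigr)\,ds\,dt,
\end{align*}
and noting that the integrand is strictly positive on the subset where $0 < t < s < 1$ (since $f(s)/g(s) > f(t)/g(t) > 0$), which has positive two-dimensional Lebesgue measure inside the integration region. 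Thus the integral is strictly positive, giving $F(x_1)/G(x_1) < F(x_2)/G(x_2)$, which is the desired conclusion.

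There is no significant obstacle here. The only bookkeeping point is confirming that both hypotheses of the lemma collapse into the single condition $a \geq 0$, $b \leq -1$, as verified above. The singularity of $t^a(1-t)^b$ at $t = 1$ when $b < 0$ is harmless: the densities $f$ and $g$ themselves are polynomials, and the density ratio only needs to be understood on $(0,1)$ for the argument to run.
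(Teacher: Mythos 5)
Your proof is correct, and it takes a genuinely different route from the one in the paper. The paper proves \thref{lem:binexpr} by first handling two base cases algebraically---$(p,j)\to(p+1,j)$ with $j=k$, and $(p,j)\to(p,j-1)$ with $p=r$---in each case writing the quotient as $1$ minus the reciprocal of an explicitly increasing sum of terms $x^{n-j}(1-x)^{j-n}$, and then composing the general quotient as a telescoping product of such base-case quotients. Your approach instead uses the integral representation $\Bg{p}{j}=\int_0^x p\,\Be[t]{p-1}{j-1}\,dt$ (which, as you note, is the $m=1$ case of \thref{prop:B.deriv} together with the vanishing of $\Bg{p}{j}$ at $x=0$), reduces the problem to a monotone-likelihood-ratio statement for the Beta-type densities $t^{j-1}(1-t)^{p-j}$ and $t^{k-1}(1-t)^{r-k}$, and then invokes the standard fact that an increasing density ratio implies an increasing ratio of cumulative integrals, which you prove directly via a symmetrization of the double integral. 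Your bookkeeping is right: in both hypothesis cases the exponents $a=j-k$ and $b=(p-j)-(r-k)$ satisfy $a\geq 0$ and $b\leq -1$, which makes the logarithmic derivative $a/t - b/(1-t)$ strictly positive on $(0,1)$, and the set $\{0<t<s<1\}$ inside the integration region has positive planar measure, so the strict inequality goes through. Your argument is arguably more conceptual and generalizes more cleanly; the paper's is more elementary and self-contained, avoiding the need for the MLR-implies-increasing-CDF-ratio step.
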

\begin{proof}
  First we consider the case where $j=k$ and $r=p+1$.
  The event $\{\Bin(p+1,x)\geq j\}$ occurs if the first $p$ coin flips yield at least $j$ successes,
  or if they yield exactly $j-1$ successes and the final coin flip is a success. Hence
  \begin{align*}
    \P[\Bin(p+1,x)\geq j]=\P[\Bin(p,x)\geq j] + x\P[\Bin(p,x)=j-1],
  \end{align*}
  giving us
  \begin{align*}
    \frac{\P[\Bin(p,x)\geq j]}{\P[\Bin(p+1,x)\geq j]}
      &= 1 - \frac{x\P[\Bin(p,x)=j-1]}{\P[\Bin(p+1,x)\geq j]}\\
      &= 1 - \frac{\binom{p}{j-1}x^{j}(1-x)^{p-j+1}}{\sum_{\nnn=j}^{p+1}\binom{p+1}{\nnn}x^\nnn(1-x)^{p-\nnn+1}}\\
      &= 1 - \frac{\binom{p}{j-1}}{\sum_{\nnn=j}^{p+1}\binom{p+1}{\nnn}x^{\nnn-j}(1-x)^{j-\nnn}}.
  \end{align*}
  The expression $x^{\nnn-j}(1-x)^{j-\nnn}$ is increasing in $x$ for $\nnn\geq j$ and strictly increasing for $\nnn>j$.
  Since $j\leq p$, the sum contains at least one strictly increasing
  term. Hence this expression is strictly 
  increasing.
  
  Next, consider the case where $j = k+1$ and $p=r$. Here, we have
  \begin{align*}
    \frac{\P[\Bin(p,x)\geq k+1]}{\P[\Bin(p,x)\geq k]}
      &= \frac{\P[\Bin(p,x)\geq k]-\P[\Bin(p,x)=k]}{\P[\Bin(p,x)\geq k]}
      = 1 - \frac{\P[\Bin(p,x)=k]}{\P[\Bin(p,x)\geq k]}.
  \end{align*}
  Hence it suffices to show that $\P[\Bin(p,x)\geq k] / \P[\Bin(p,x)=k]$ is strictly increasing.
  We express this quantity as
  \begin{align*}
    \frac{\P[\Bin(p,x)\geq k]}{ \P[\Bin(p,x)=k]} &= \frac{\sum_{\nnn=k}^p\binom{p}{\nnn}x^\nnn(1-x)^{p-\nnn}}{\binom{p}{k}x^k(1-x)^{p-k}} = \sum_{\nnn=k}^p \frac{\binom{p}{\nnn}}{\binom{p}{k}} x^{\nnn-k}(1-x)^{k-\nnn}.
  \end{align*}
  As in the previous case, the expression $x^{\nnn-k}(1-x)^{\nnn-k}$ is increasing in $x$ for $\nnn\geq k$
  and is strictly increasing for $\nnn>k$, and at least one of the strictly increasing terms appears.
  
  Finally, the general case follows from the two special cases by expressing 
  \eqref{eq:binexpr} as a product of quotients considered in the special
  cases. For example,
  \begin{align*}
    \frac{\Bg{5}{3}}{\Bg{7}{2}} &= \frac{\Bg{5}{3}}{\Bg{6}{3}}\frac{\Bg{6}{3}}{\Bg{7}{3}}\frac{\Bg{7}{3}}{\Bg{7}{2}},
  \end{align*}
  and is hence the product of strictly increasing functions.
\end{proof}

\begin{proof}[Proof of \thref{prop:truncations}]
  Let $(\chi,h)$ be the $m$-truncation of an $m$-supercordant system,
  and let $\Psi$ be its automaton distribution map. We must
  show that $\Psi$ has a single fixed point on $(0,1]$.
  We observe that $(\chi,h)$ is $m$-supercordant itself, since by
  \thref{thm:derivatives} the first $m$ derivatives of $\Psi$ at $0$ are equal
  to those of the automaton distribution map of the original $m$-supercordant system (see
  \thref{rmk:only.tierm}).
  
  If $h(\ell)\leq\ell$ does not hold for all $\ell\geq 1$, 
  define a new system $(\widetilde\chi,\widetilde{h})$ where for $\ell\geq 1$,
  \begin{align*}
    \widetilde\chi(\ell) &= \begin{cases}
      \chi(\ell) & \text{if $h(\ell)\leq \ell$,}\\
      0 & \text{if $h(\ell)>\ell$,}
    \end{cases},&
    \widetilde{h}(\ell) &= \begin{cases}
      h(\ell) & \text{if $h(\ell)\leq \ell$,}\\
      \ell & \text{if $h(\ell)>\ell$,}
    \end{cases}
  \end{align*}
  with $\widetilde\chi(0)$ set to make $\widetilde\chi$ a probability measure.
  Note that $\widetilde{h}(\ell)$ is still increasing.
  Since $\Bg{\ell}{h(\ell)}=0$ when $h(\ell)>\ell$, 
  the systems $(\chi,h)$ and $(\widetilde\chi,\widetilde{h})$ have identical automaton distribution
  maps, and we can work with $(\widetilde\chi,\widetilde{h})$ in place of $(\chi,h)$.
  Thus we will assume without loss of generality that $h(\ell)\leq\ell$ for all $\ell\geq 1$.
  
  We first give a proof for the case that tier~$m$ of $(\chi,h)$ consists of a single value $r$.
  Since the system is supercordant, by Taylor approximation we have $\Psi(x)>x$ for $x\in(0,\epsilon)$ 
  for some sufficiently small $\epsilon>0$.
  Since $\Psi(1)\leq 1$, the graph of
  $\Psi$ eventually dips down below or onto the line $y=x$, and 
  hence $\Psi$ has some nonzero fixed point.
  Now we show it has at most one.
  Let $\bar\chi$ be the $(m-1)$-truncation of $\chi$.
  The system $(\bar\chi,h)$ is $(m-1)$-concordant by \thref{rmk:only.tierm}. Its maximum
  threshold is $m-1$ or less by definition of truncation. By \thref{lem:decompose}\ref{i:fulltiers},
  it is $(m-1)$-critical.
  Define a map $\varphi\colon(0,1]\to\mathbb{R}$ by
  \begin{align}\label{eq:varphi.redux}
    \varphi(x) &= \frac{x - \sum_{\ell=1}^{r-1}\chi(\ell)\Bg{\ell}{h(\ell)}}{\Bg{r}{m}}
      =  \frac{x - \sum_{\ell=1}^{r-1}\bar\chi(\ell)\Bg{\ell}{h(\ell)}}{\Bg{r}{m}}.
  \end{align}
  By \thref{lem:fp.criterion}, the nonzero fixed points of $(\chi,h)$ make up
  the set $\varphi^{-1}\bigl(\chi(r)\bigr)$.
  By \thref{prop:varphi} applied
  to $(\bar\chi,h)$, the function $\varphi(x)$ is strictly increasing. 
  Thus $\varphi^{-1}\bigl(\chi(r)\bigr)$ contains no more than one point,
  and $(\chi,h)$ has at most one nonzero fixed point.
  This shows that $(\chi,h)$ has exactly one nonzero fixed point $x_0$, with $\Psi(x)>x$
  for $x\in(0,x_0)$ and $\Psi(x)<x$ for $x\in(x_0,1]$.

  To extend the proof to the case where tier~$m$ contains more than one value, again
  assume that $(\chi,h)$ has maximum threshold~$m$ and is $m$-supercordant.
  As before, by Taylor approximation $\Psi(x)$ has at least one fixed point on $(0,1]$.
  Since the fixed points form a closed subset of $(0,1]$, there is a largest fixed point;
  call it $x_0$.
  Let $r$ be the smallest value in tier~$m$ of $(\chi,h)$.
  Our strategy now will be to construct a new system $(\widetilde\chi,h)$
  where all of tier~$m$ is concentrated on $r$. By the special case of the proposition
  we have already proven, this system has a unique nonzero fixed point. Then we will compare
  this system's automaton distribution map to $\Psi$ and show that $\Psi$ must also
  have a unique fixed point (see Figure~\ref{fig:chi.tilde}).
  
  To carry this out, first let $\bar\chi$ be the $(m-1)$-truncation of $\chi$ and define
  $\varphi$ by \eqref{eq:varphi.redux} again.
  Let $p=\varphi(x_0)$. Now we define a new probability measure $\widetilde\chi$ 
  supported on $\{0,\ldots,r\}$ by
  \begin{align*}
    \widetilde\chi(\ell) &= \begin{cases}
      \bar\chi(\ell) & \text{if $1\leq\ell\leq r-1$,}\\
      \bar\chi(0)-p & \text{if $\ell=0$,}\\
      p & \text{if $\ell=r$.}
    \end{cases}
  \end{align*}
  Note that this is a valid probability measure
  since $p\in(0,\bar\chi(0)]$ by \thref{prop:varphi} applied to $(\bar\chi,h)$.
  Now we consider the system $(\widetilde\chi,h)$. 
  By \thref{lem:fp.criterion}, it has $x_0$ as a fixed point.
  By the special case of this proposition we have already proven, the system $(\widetilde\chi,h)$
  has no other nonzero fixed points besides $x_0$, and $\widetilde\Psi(x)>x$ for
  $0<x<x_0$, where $\widetilde\Psi$ is the automaton distribution map of $(\widetilde\chi,h)$.
  
  We claim that
  $\Psi(x)\geq \widetilde\Psi(x)$ for $0<x<x_0$. 
  Indeed, comparing the two functions, we have
  \begin{align*}
    \Psi(x) - \widetilde\Psi(x) = \sum_{\ell=r}^{\infty}\chi(\ell)\Bg{\ell}{m} - p\Bg{r}{m}.
  \end{align*}
  Hence
  \begin{align*}
    \frac{\Psi(x) - \widetilde\Psi(x)}{\Bg{r}{m}} &= \chi(r) + \sum_{\ell=r+1}^{\infty}\chi(\ell)\frac{\Bg{\ell}{m}}{\Bg{r}{m}} - p.
  \end{align*}
  By \thref{lem:binexpr}, this expression is strictly decreasing in $x$. 
  It equals $0$ at $x=x_0$ since $\Psi(x)=\widetilde\Psi(x_0)=x_0$, and hence it
  is positive when $0<x<x_0$, establishing the claim.

  Since $\Psi(x)\geq\widetilde\Psi(x)$ for $0<x<x_0$, and we have already shown that
  $\widetilde\Psi(x)>x$ for $0<x<x_0$, the function $\Psi$ has no nonzero fixed points smaller than $x_0$.
  Since $x_0$ was taken to be the largest fixed point of $\Psi$, it is the only one.
\end{proof}
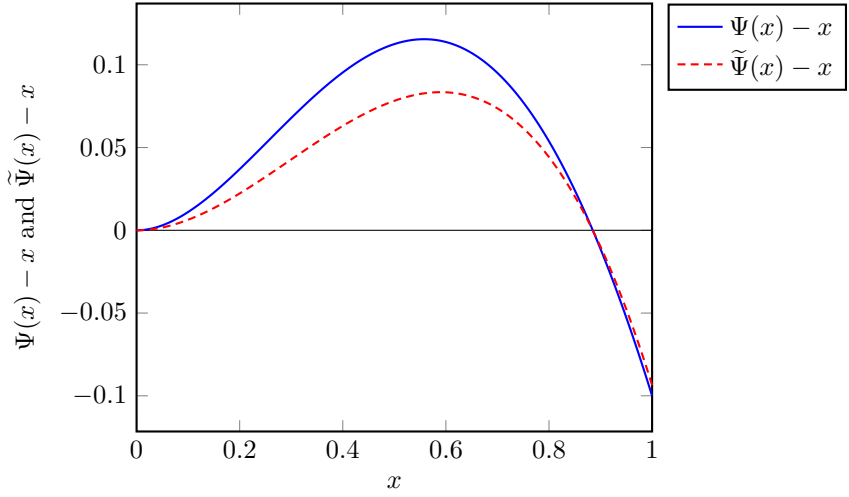
\begin{figure}
    \begin{tikzpicture}
    \begin{axis}[xmin=0, xmax=1,
                 legend pos=outer north east, thick, xlabel={$x$}, ylabel={$\Psi(x)-x$ and $\widetilde\Psi(x)-x$},
                 legend cell align=left,scaled ticks=false,y tick label style={/pgf/number format/fixed},
                 cycle list name=colorchange]
      \addplot[thin,black, forget plot, no markers] {0};
      \addplot+[no marks] table {psi.dat};
      \addplot+[no marks] table {psitilde.dat};
      \legend{$\Psi(x)-x$,$\widetilde\Psi(x)-x$}

    \end{axis}
    \end{tikzpicture}
    
  \caption{Let $\chi$ place vector of probabilities 
  $\bigl(\frac{1}{10},0,\frac12,\frac15,\frac15\bigr)$ on values $0$, $1$, $2$, $3$, $4$, and let
  $h(0)=h(2)=1$ and $h(3)=h(4)=2$.
  The system $(\chi,h)$ is $2$-supercordant; its automaton distribution map can be computed
  to be $\Psi(x)=x + \frac{13}{10}x^2 -2x^3 + \frac35 x^4$, with fixed point
  $x_0=(10-\sqrt{22})/6\approx .885$. The system $(\widetilde\chi,h)$ with the same
  fixed point but only a single value in tier~$2$ is found by computing
  $p=\varphi(x_0)\approx .406$, where $\varphi$ is given in \eqref{eq:varphi.redux},
  and then letting $\widetilde\chi$ place vector of probabilities $\bigl(\frac{1}{2}-p,0,\frac12,p,0\bigr)$
  on $0$, $1$, $2$, $3$, $4$. The automaton distribution map $\widetilde{\Psi}$ of $(\widetilde\chi,h)$
  is shown above together with $\Psi$.
  }
  \label{fig:chi.tilde}
\end{figure}

\section{Proofs of main theorems}

Recall the notation $n_t(v)$ for the number of children of $v$ in a rooted tree $t$
and the definition that $S$ is an admissible subtree of a rooted tree $T$ if 
$S$ contains the root of $T$ and $n_S(v)\geq h(n_T(v))$ for all vertices $v$ in $S$.
Also recall that $t(v)$ denotes the subtree of $t$ consisting of $v$ and all its descendants.

\begin{proof}[Proof of \thref{lem:highest.fp}]
  Let
  \begin{align*}
    \Tt_1=\{\text{$T$ contains an admissible subtree}\}.
  \end{align*}
  The largest fixed point $x_1$ of $(\chi,h)$ always has a corresponding
  interpretation \cite[Proposition~5.6]{JPS}. To prove that $\Tt_1$ is this interpretation,
  we first establish that $\Tt_1$ is an interpretation (i.e., it behaves consistently with 
  the threshold function $h$). Then, we show that for any interpretation $\Tt$,
  there exists an admissible subtree on the event $\Tt$, and hence
  $\Tt_1$ must have the largest probability of any interpretation.
  
  To show that $\Tt_1$ is an interpretation, we must show the following:
  Let $t$ be a tree with root $\rho$ that has $\ell$ children. Then we have
  $t\in\Tt_1$ if and only if $t(v)\in\Tt_1$ for at least $h(\ell)$ children $v$ of $\rho$.
  To prove this, first observe that if $s$ is an admissible subtree of $t$ containing a vertex~$v$,
  then $s(v)$ is an admissible subtree of $t(v)$.
  Now, suppose $t\in\Tt_1$. It thus contains an admissible subtree $s$.
  For each child $v\in s$ of $\rho$, we have $t(v)\in\Tt_1$ since $s(v)$ is an admissible subtree
  of $t(v)$. And since $s$ is admissible, $n_s(\rho)\geq h(\ell)$. Conversely, suppose there
  are at least $h(\ell)$ children $v$ of $\rho$ such that $t(v)\in\Tt_1$. Each subtree $t(v)$
  contains an admissible subtree. The concatenation of all of them together with $\rho$ is
  then admissible. This completes the proof that $\Tt_1$ is an interpretation.
  
  Now, let $\Tt$ be an arbitrary interpretation of $(\chi,h)$, and we argue that on the event
  $\Tt$ there exists an admissible subtree $S$ of $T$. To form $S$ when $\Tt$ occurs, let
  it include $\rho$. Then let it contain all children $v_1$ of $\rho$ for which $T(v_1)\in\Tt$,
  and then let it contain all children $v_2$ of these children for which $T(v_2)\in\Tt$, and so on.
  Since a tree~$t$ with root~$\rho$ is in $\Tt$ if and only if at least $h(n_t(\rho))$ of its root-child
  subtrees are in $\Tt$, the tree $S$ is admissible.
  
  To complete the proof, observe that $\GW{\chi}(\Tt_1)$ is a fixed point of $(\chi,h)$ since $\Tt_1$
  is an interpretation. For any interpretation $\Tt_1$, we have $\GW{\chi}(\Tt)\leq\GW{\chi}(\Tt_1)$, since
  $\Tt\subseteq\Tt_1$. Thus $\Tt_1$ must correspond to the largest interpretable fixed point,
  which is $x_1$.
\end{proof}

Next, we show that any admissible subtree contains a minimal admissible subtree within it.
This is akin to showing that a tree in which all vertices have at least two children contains
a subtree in which all vertices have exactly two children.

\begin{lemma}\thlabel{lem:minimal.admissible.subtree}
  Let $s$ be an admissible subtree of $t$ with respect to the threshold function $h$.
  Then there exists a subtree $s'\subseteq s$ that is also an admissible subtree of $t$
  for which $n_{s'}(v) = h(n_t(v))$ for all $v\in s'$.
\end{lemma}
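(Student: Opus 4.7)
The plan is to construct $s'$ greedily, working down from the root of $t$ level by level, keeping at each vertex exactly $h(n_t(v))$ of its children in $s$. Since $s$ is admissible, every $v \in s$ satisfies $n_s(v) \geq h(n_t(v))$, so there are enough children to make the required selection.

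Concretely, I would define sets of vertices $L_n \subseteq s$ by induction on the level $n$. Set $L_0 = \{\rho\}$, where $\rho$ is the root of $t$ (which lies in $s$ because $s$ is admissible). Having defined $L_n$, for each $v \in L_n$ choose an arbitrary subset $C_v$ of the children of $v$ in $s$ with $|C_v| = h(n_t(v))$; this is possible because $v \in s$ forces $n_s(v) \geq h(n_t(v))$. Then let $L_{n+1} = \bigcup_{v \in L_n} C_v$, and finally let $s' = \bigcup_{n \geq 0} L_n$.

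It remains to verify the three properties. First, $s' \subseteq s$ by a trivial induction on $n$: $L_0 \subseteq s$, and if $L_n \subseteq s$ then $L_{n+1} \subseteq s$ since each $C_v$ consists of vertices of $s$. Second, $s'$ is a subtree of $t$ rooted at $\rho$: it contains $\rho$, and every non-root vertex in $s'$ lies in some $L_{n+1}$ and thus has its parent in $L_n \subseteq s'$. Third, by the construction, for each $v \in s'$ the children of $v$ in $s'$ are exactly those in $C_v$, so $n_{s'}(v) = |C_v| = h(n_t(v))$. In particular $n_{s'}(v) \geq h(n_t(v))$ holds with equality, so $s'$ is an admissible subtree of $t$ satisfying the required equality.

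There is no real obstacle here beyond bookkeeping: the only mild subtlety is that when $s$ is infinite one is making countably many independent choices of subsets $C_v$, which is handled by ordinary (countable, dependent) choice; alternatively one can fix a canonical rule such as always selecting the $h(n_t(v))$ leftmost children of $v$ in $s$ under some fixed ordering of the children of each vertex in $t$, which removes any need for choice.
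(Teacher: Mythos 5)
Your proof is correct and follows essentially the same level-by-level greedy construction as the paper's, with a bit more bookkeeping and a remark on avoiding choice; no further comment is needed.
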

\begin{proof}
  We construct $s'$ one level at a time.
  We start by including the root of $t$ in it.
  Now, suppose we have constructed it to level~$n$.
  Consider a level~$n$ vertex $v$ in $s'$.
  By admissibility it has at least $h(n_t(v))$ children in $s$.
  Arbitrarily choose exactly $h(n_t(v))$ of them to include in $s'$.
  Proceeding like this for all of the level~$n$ vertices in $s'$ and then continuing on
  to successive levels produces $s'\subseteq s$ that is an admissible subtree of $t$.
\end{proof}

\begin{proof}[Proof of \thref{thm:critical.event}]
  Let $r$ be the highest value in tier~$m$ of $(\chi,h)$.
  Let $\Tt_0$ be the event described in the statement of this theorem, that $T$
  contains an admissible subtree $S$ in which all but finitely many vertices $v$ satisfy
  $n_S(v)\leq m$. First, we show that $\Tt_0$ is an interpretation.
  The argument is mostly the same as for $\Tt_1$ being an interpretation in the proof
  of \thref{lem:highest.fp}.
  Let $t$ be a tree with root $\rho$ that has $\ell$ children. First, suppose
  that $t(v)\in\Tt_0$ holds for at least $h(\ell)$ of the children $v$ of $\rho$. For each
  such vertex $v$, the
  root-child subtree $t(v)$ thus contains an admissible subtree $s(v)$ for which all
  but finitely many vertices $u\in s(v)$ satisfy $n_{s(v)}(u)\leq m$.
  Combining each subtree $s(v)$ together with $\rho$ yields
  an admissible subtree of $t$ for which all but finitely many vertices have $m$ or fewer
  children, showing that
  $t\in\Tt_0$. Conversely, suppose that $t\in\Tt_0$, and let $s$ be an admissible subtree of $t$
  for which all but finitely many vertices $v\in s$ satisfy $n_s(v)\leq m$.
  In general, for any admissible subtree $s$ of $t$ and $v\in s$, the tree $s(v)$ is an admissible
  subtree of $t(v)$. And if all but finitely many vertices in $s$ have $m$ or fewer children,
  then the same is true for any subtree $s(v)$. Thus for any child $v$ of $\rho$ in $s$, 
  we have $t(v)\in\Tt_0$. By admissibility of $s$, we have $n_s(\rho)\geq h(\ell)$. 
  Hence $t(v)\in\Tt_0$ for at least $h(\ell)$ children $v$ of $\rho$.
   This completes the proof that $\Tt_0$ is an interpretation
  and its probability is therefore one of the fixed points of $\Psi$.
  
  Now we must determine which fixed point is associated with $\Tt_0$.
  Since $(\chi,h)$ is $m$-supercordant, by Taylor approximation we have $\Psi(x)>x$
  for $x\in(0,\epsilon)$ for a sufficiently small $\epsilon$. We cannot have $\Psi(x)>x$
  for all $x\in(0,1]$ since $\Psi(1)\leq 1$.
  Hence $\Psi$ has a smallest nonzero fixed point $x_0$. Our goal now
  is to show that $\GW{\chi}(\Tt_0)=x_0$.
  

  Let $\bar\chi$ be the $m$-truncation of $\chi$, and consider the system $(\bar\chi,h)$.
  Its automaton distribution map $\bar\Psi$ has a unique nonzero fixed point $\bar x_0$ by
  \thref{prop:truncations}. Directly from the definition of the automaton
  distribution map, we have $\bar\Psi(x)\leq\Psi(x)$. Also $(\bar\chi,h)$
  remains $m$-supercordant by \thref{rmk:only.tierm}, and hence
  the graph of $\bar\Psi$ is above the line $y=x$ near $0$.
  These last two facts prove that $\bar x_0\leq x_0$.
  Let $\barT\sim\GW{\bar\chi}$.
  Since $\bar x_0$ is the only nonzero fixed point of $\bar\Psi$,   by \thref{lem:highest.fp}
  the interpretation $\overline\Tt$ of $(\bar\chi,h)$ associated with $\bar x_0$
  is that $\barT$ contains an admissible subtree.

  Couple $\barT$ with $T$ by defining $\barT$ as the connected component of the root
  in the subgraph of $T$ consisting
  of the root together with each vertex whose parent $v$ satisfies $n_T(v)\leq r$.
  We claim that under this coupling, $\overline\Tt$ holds
  if and only if $T$ contains an admissible subtree $S$ made up entirely
  of vertices $v$ satisfying $n_S(v)\leq m$.
  To prove this, first observe that under this coupling,
  we have $n_{\barT}(v)=n_T(v)\1\{n_T(v)\leq r\}$  for $v\in\barT$.
  If $\barS$ is an admissible subtree of
  $\barT$, then it has no leaves by the positivity of $h$; hence
  $n_{\barT}(v)\geq n_{\barS}(v)\geq 1$ for $v\in\barS$.
  Therefore $n_\barT(v)=n_T(v)$ for all $v\in \barS$.
  This proves that $\barS$ is an admissible subtree not just of $\barT$
  but also of $T$. For every vertex $v\in \barS$ besides the root, 
  the parent $u$ of $v$ satisfies $n_\barT(v)=n_T(v)\leq r$. 
  Since $\barS$ has no leaves, every vertex in $\barS$ is the parent of some other vertex,
  and hence $n_T(v)\leq r$ for all $v\in\barS$.
  This proves that if $\overline\Tt$ holds, then $T$ contains an admissible subtree $\barS$ made up entirely
  of vertices $v$ satisfying $n_T(v)\leq r$. By \thref{lem:minimal.admissible.subtree},
  there exists a subtree $S'\subset\barS$ that is also an admissible subtree of $T$ and which
  satisfies $n_{S'}(v)=h(n_T(v))$ for all $v\in S'$. Since $n_T(v)\leq r$ and $h(r)=m$,
  we have $n_{S'}(v)\leq m$ for all $v\in S'$.  
  
  Conversely, suppose $T$ contains an admissible subtree $S$
  made up entirely
  of vertices $v$ satisfying $n_S(v)\leq m$.
  By admissibility, 
  all $v\in S$ also satisfy $n_S(v)\geq h(n_T(v))$. Hence $h(n_T(v))\leq m$,
  proving that $n_T(v)\leq r$. Thus $S$ is a subtree of $\barT$.
  We then have $n_\barT(v)=n_T(v)$ for all $v\in S$, showing that $S$ is an admissible
  subtree of $\barT$ and proving that $\overline\Tt$ holds. This proves
  that $\overline\Tt$ holds
  if and only if $T$ contains an admissible subtree made up entirely
  of vertices $v$ satisfying $n_T(v)\leq r$.  
  It is worth emphasizing that $\overline\Tt$ is not an interpretation of $(\chi,h)$:
  with $L$ the number of children of the root of $T$,
  we might have $T(v)\in\overline\Tt$ for at least $h(L)$ children $v$ of the root but have
  $T\notin\overline\Tt$ because $L>r$.
  
  Now let
  $\overline\Tt_n$ denote the event that
  $T$ contains an admissible subtree that from level~$n$ onward contains only vertices
  $v$ satisfying $n_T(v)\leq r$. 
  We have
  \begin{align}
    \overline\Tt_0 \subseteq \overline\Tt_1\subseteq\overline\Tt_2\subseteq\cdots,\qquad
    \text{and}\qquad\bigcup_{n=0}^{\infty}\overline\Tt_n = \Tt_0.\label{eq:union}
  \end{align}
  We claim that
  \begin{align*}
    \GW{\chi}\bigl(\overline\Tt_n\bigr) = \overbrace{\Psi\circ\cdots\circ\Psi}^{\text{$n$ times}}(\bar{x}_0).
  \end{align*}
  To see this, recall that $\bar x_0=\GW{\bar\chi}(\overline{\Tt})$. Since
  $\overline\Tt$ occurs if and only if $\overline\Tt_0$ occurs 
  under the coupling of $\barT$ and $T$, we have
  $\GW{\chi}(\overline\Tt_0)=\GW{\bar\chi}(\overline\Tt)=\bar x_0$.
  Thus $\Psi(\bar x_0)$
  is the probability that the root of $T$ has at least $h(L)$ children whose descendent subtrees
  satisfy event $\overline\Tt_0$, where $L$ is the number of children of the root. This event is 
  exactly $\overline\Tt_1$.
  Continuing in this way, the $n$-fold iteration of $\Psi$ applied to $\bar x_0$ is the probability of 
  $\overline\Tt_n$.
  
  From \eqref{eq:union},
  we can compute $\GW{\chi}(\Tt_0)$ by finding $\lim_{n\to\infty}\GW{\chi}\bigl(\overline\Tt_n\bigr)$.
  Because $x<\Psi(x)\leq x_0$ for $x\in(0,x_0]$, iteration of $\Psi(x)$ starting at any $x\in(0,x_0]$
  produces an increasing sequence converging to a value that must be a fixed point of $\Psi$
  by continuity of $\Psi$. This limit must therefore be $x_0$, the smallest
  nonzero fixed point.
  We therefore have $\GW{\chi}\bigl(\overline\Tt_n\bigr)\to x_0$, proving that $\GW{\chi}(\Tt_0)=x_0$.
\end{proof}

\section{Conclusions and remaining questions}\label{sec:questions}

In this paper, we give simple criteria for determining if a continuous phase transition
will occur at $(\chi,h)$ (\thref{thm:criticality}). When a continuous phase transition occurs,
we characterize the event undergoing the phase transition when it occurs in the most
natural way, with the graph of the automaton distribution map rising above the line $y=x$
as the phase transition occurs (\thref{thm:critical.event}). But some examples of continuous phase transitions
do not fit this description. In 
Figure~\ref{fig:phase.transition.classes.bad}, we give a family of child distributions 
in which two fixed points emerge from $0$ simultaneously
as the phase transition occurs. The event undergoing the phase transitions is associated with the
second of these, and \thref{thm:critical.event} does not apply. (In fact,
in the example in Figure~\ref{fig:phase.transition.classes.bad},
the interpretation associated with the second fixed point can be described as the existence
of an admissible subtree of $T\sim\GW{\chi_t}$ in which all but finitely many vertices $v$
satisfy $n_T(v)\leq 3$, along the lines as when \thref{thm:critical.event} applies. This holds
in this case because truncating
$\chi_t$ by shifting the mass on $6$ to $0$ yields a new system with a single nonzero fixed point,
as in \thref{prop:truncations}. But it is possible to tweak the example so this fails.) 
It also seems possible to construct examples along the same
lines as the one in Figure~\ref{fig:phase.transition.classes.bad} but with the automaton distribution map
repeatedly wiggling up and down along $y=x$ so that multiple interpretable
fixed points emerge from $0$ simultaneously. It is not clear to us how to describe the events
undergoing phase transitions in circumstances like these.

When a continuous phase transition occurs and $\GW{\chi_t}(\Tt)$ emerges from $0$
at $t=0$, it would be interesting to investigate the behavior of this probability.
For example, what behaviors can it show close to $t=0$? And how does this behavior compare to 
known or conjectured properties of phase transitions in statistical physics?

One might also want to generalize away from monotone automata and away from two-state automata
(see Section~\ref{subsec:automata.interpretations}).
For nonmonotone two-state automata, \thref{prop:interpretable.crit} fails but a more general
criterion \cite[Theorem~1.7]{JPS} still allows us to determine whether a given fixed point
has an interpretation or not. But the situation is very different; for example, the highest
fixed point is not always interpretable \cite[Examples~5.7, 5.8]{JPS}.
For multistate automata, only one direction of this criterion is proven, and the automaton
distribution map becomes a map from $\mathbb{R}^k$ to $\mathbb{R}^k$ and is generally harder to analyze.

The simplest case of \thref{thm:critical.event} to understand is for a system $(\chi,h)$
that is $1$-supercordant. Then the interpretation $\Tt_0$ associated with the smallest nonzero fixed point
is that $T\sim\GW{\chi}$ has an admissible subtree $S$ in which eventually all vertices $v$
have $n_S(v)=1$.  Thus, $\Tt_0$ is equivalent to the event that $T$ contains an admissible
subtree~$S$ such that the number of vertices at the $n$th level of $S$ is bounded in $n$.

Is there a description of $\Tt_0$ in terms of existence of an admissible subtree with specified growth
when $(\chi,h)$ is $m$-supercordant for $m\geq 2$? 
In this case, any admissible subtree must continue branching forever (i.e., its size at level~$n$
cannot remain bounded over all $n$).
Indeed, any admissible subtree whose size at level~$n$ remains bounded must have all but finitely
many of its vertices $v$ satisfying $h(n_T(v))=1$.
But restricting $T$ to such vertices yields a Galton--Watson tree with child distribution $\bar\chi$,
where $\bar\chi$ is the $1$-truncation of $\chi$. If $\bar\Psi$ is the automaton distribution map
of $(\bar\chi,h)$, then $\bar\Psi'(0)=\Psi'(0)=1$ by \thref{rmk:only.tierm}, and hence
$\bar\chi$ has expectation~$1$ by \eqref{eq:Psi'}, and this Galton--Watson tree is therefore critical.
Hence, for any $v\in T$, it cannot occur that $T(v)$ has an admissible subtree
consisting entirely of vertices $v$ satisfying $h(n_T(v))=1$, since the subtree
formed by these vertices is a critical Galton--Watson tree and is thus finite.
But we conjecture that $T$ has an admissible subtree of small growth on the event $\Tt_0$
(leaving it vague precisely what \emph{small growth} should mean),
while for all other interpretations $\Tt$ it occurs with positive probability
that all admissible subtrees have exponential growth. 

We present two examples to give some limited evidence for this conjecture.
Consider the example shown in Figure~\ref{fig:emergence}, where
$\chi_t$ places probability $1/2$ on $2$ and $1/6-t$ on $3$, and $h(2)=1$ and $h(3)=2$.
The system $(\chi_t,h)$ is $2$-supercordant for $t>0$, and it has a single nonzero fixed point $x_0=x_0(t)$.
The exact value of $x_0$ is not important in this example, though in this
case we can compute it to be $x_0=9t/(6t-1)$ by solving the equation $\Psi_t(x)=x$ directly.
By \thref{thm:critical.event} or \thref{lem:highest.fp}, this fixed point
has the interpretation $\Tt_0$ that $T\sim\GW{\chi_t}$ 
contains an admissible subtree.
We sketch a proof 
that $T$ has an admissible subtree of growth $e^{O(\sqrt{n})}$ on $\Tt_0$.

Let $L$ be the number of children of the root of $T$, and let $N$ be the number of these children
$v$ for which $T(v)\in\Tt_0$. Given $L=2$, which occurs with probability $1/2$, the probability
that $N=1$ is $\P[\Bin(2,x_0)=1]=2x_0(1-x_0)$ by self-similarity of $T$. Similarly, the probability that $N=2$
given $L=2$ is $x_0^2$. Hence
\begin{align*}
  \P[L=2,\, N=1\mid\Tt_0] = 1-x_0 \quad\qquad\text{and}\qquad\quad
  \P[L=2,\,N=2\mid\Tt_0] = \frac{x_0}{2}.
\end{align*}
Since $L\neq 0$ given $\Tt_0$, we have $\P[L=3\mid\Tt_0]=x_0/2$.

Now, consider the minimum number of vertices at level~$n$ over all admissible subtrees of $T$, 
given that $\Tt_0$ holds. We can construct a random variable $X_n$ with this distribution as follows.
Let $X_0=1$. Now, inductively define
\begin{align*}
  X_{n+1} = \begin{cases}
    X_n & \text{with prob.\ $\P[L=2,\, N=1\mid\Tt_0]=1-x_0$,}\\
    \min(X_n,\,X'_n) & \text{with prob.\ $\P[L=2,\, N=2\mid\Tt_0]=x_0/2$,}\\
    X_n+X'_n & \text{with prob.\ $\P[L=3,\, N=2\mid\Tt_0]$,}\\
    \min(X_n+X'_n,\,X_n+X''_n,\,X_n'+X_n'') & \text{with prob.\ $\P[L=3,\, N=3\mid\Tt_0]$,}
  \end{cases}
\end{align*}
where $X_n'$ and $X''_n$ are independent copies of $X_n$. 
We claim that $X_n$ is distributed as mentioned before. Indeed, this holds trivially for $X_0$. 
Proceeding inductively,
if $X_n$, $X_n'$, and $X_n''$ are thought of as the minimum number of vertices at level~$n$
in an admissible subtree of $T(v)$ for the three potential children of the root $v$, then
$X_{n+1}=X_n$ when $L=2$ and $N=1$, and $X_{n+1}=\min(X_n,\,X'_n)$ when
$L=2$ and $N=2$, and so on.

It is often difficult to analyze the growth of recursively defined distributions like these,
and we avoid doing so by comparing the growth of $X_n$ to a process
analyzed in \cite{AC} known as the min-plus binary tree. Particles of weight~$1$ start at the bottom
of a binary tree of depth~$n$. Each particle then moves up the tree. Each particle collides
with another one moving up the tree at each step, and with probability~$1/2$ either they merge
or the smaller particle annihilates the larger one. The size of the particle
arriving at the root has distribution given by the recursive contruction where $Y_0=1$ and then
\begin{align*}
  Y_{n+1} = \begin{cases}
    \min(Y_n,Y_n') & \text{with probability $1/2$,}\\
    Y_n+Y_n' & \text{with probability $1/2$},
  \end{cases}
\end{align*}
with $Y_n'$ an independent copy of $Y_n$.
One can show that $X_n$ is stochastically dominated by $Y_n$.
By \cite[Theorem~1]{AC}, we have $\P[Y_n\leq e^{\pi\sqrt{N/3}}]\to 1$ as $n$ tends to infinity.

Now, we give an example with multiple interpretable fixed points and demonstrate that on
the event associated with the higher one, the expected number of vertices in the smallest
admissible tree to level~$n$
can grow exponentially. Let
\begin{align*}
    \chi_t(\ell)=\begin{cases}
    1/2 + t & \text{for $\ell=2$,}\\
    1/2 - t & \text{for $\ell=3$,}\\
  \end{cases}\qquad\qquad \text{and} \qquad\qquad
  h(\ell)=\begin{cases}
      1 & \text{for $\ell=2$,}\\
      3 & \text{for $\ell=3$,}
  \end{cases}
\end{align*}
and let $\Psi_t$ be the automaton distribution map of $(\chi_t,h)$ (see
Figure~\ref{fig:exponential.tree}). For $0<t<1/6$, the map $\Psi_t$ has two nonzero fixed
points, $x_0=x_0(t)$ and $1$, both interpretable. The event associated with $x_0$
is that $T\sim\GW{\chi_t}$ has an admissible subtree $S$ that eventually consists
only of vertices $v$ with $n_S(v)=1$. Thus the number of vertices of $S$ at level~$n$ remains
bounded in $n$. The event associated with the fixed point $1$ is the set of all trees.
We argue that the smallest admissible subtree of $T$ may be large for $0<t<1/6$.
Indeed, let $X_n$ be the minimum number of vertices at level~$n$
over all admissible subtrees $T$. Then
\begin{align*}
  X_{n+1} \eqd \begin{cases}
    \min(X_n,\,X_n') & \text{with probability $1/2+t$,}\\
    X_n + X_n' + X_n'' & \text{with probability $1/2-t$,}
  \end{cases}
\end{align*}
and $\E X_{n+1} \geq 3(1/2-t)\E X_n$. Since $t<1/6$, we have $3(1/2-t)>1$, and hence
$E X_n$ grows exponentially.

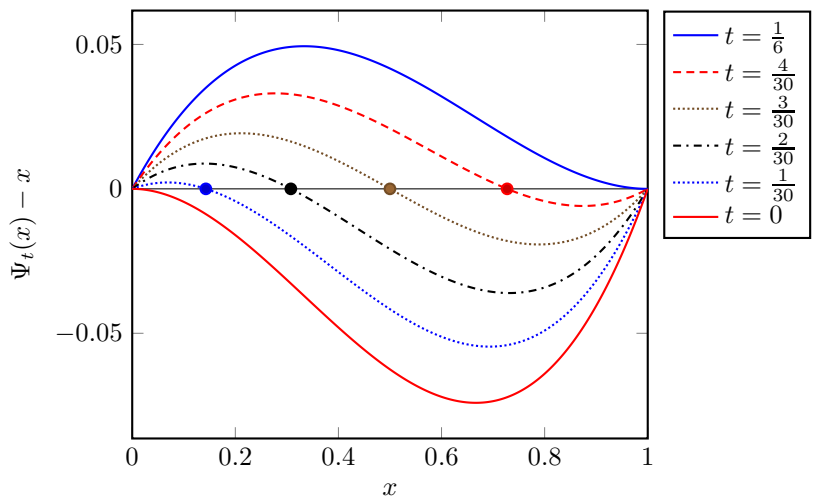
\begin{figure}
  \begin{tikzpicture}
    \begin{axis}[xmin=0, xmax=1,
                 legend pos=outer north east, thick, xlabel={$x$}, ylabel={$\Psi_t(x)-x$},
                 legend cell align=left,scaled ticks=false,y tick label style={/pgf/number format/fixed},
                 cycle list name=colorchange]
      \addplot[thin,black, forget plot, no markers] {0};
      \addplot+[no marks] table {eg5.dat};
      \addplot+[mark indices={183}] table {eg4.dat};
      \addplot+[mark indices={126}] table {eg3.dat};
      \addplot+[mark indices={78}] table {eg2.dat};
      \addplot+[mark indices={37}] table {eg1.dat};
      \addplot+[solid, no marks] table {eg0.dat};
      \legend{$t=\frac16$,$t=\frac{4}{30}$,$t=\frac{3}{30}$,$t=\frac{2}{30}$,$t=\frac{1}{30}$,$t=0$}

    \end{axis}
    
  \end{tikzpicture}
  \caption{Graphs of $\Psi_t(x)-x$, where $\Psi_t$ is the automaton distribution map
  of $(\chi_t,h)$ with  $\chi_t = \bigl(\frac{1}{2} + t\bigr)\delta_2 +
    \bigl(\frac12-t\bigr)\delta_3$, and $h(2)=1$ and $h(3)=3$.
    An interpretable fixed point emerges from $0$ as $t$ increases. The point $1$ is a fixed
    point in all examples, and for $t\geq 1/6$ it is the only fixed point.
    }\label{fig:exponential.tree}
\end{figure}

\section*{Acknowledgments}

We thank Itai Benjamini, who asked us if we could find criteria to distinguish continuous and first-order
phase transitions in the setting of Galton--Watson trees.
We also thank Fiona Skerman and Joel Spencer for their suggestions and comments.
We are grateful to a referee who improved this paper's exposition.

\bibliographystyle{amsalpha}
\bibliography{gw_transitions}

\end{document}